\documentclass[opre,nonblindrev]{informs3} % current default for manuscript submission

\usepackage{amssymb}
\usepackage{color}
\usepackage{rotating}
\usepackage{multirow}
\usepackage{latexsym}
\usepackage{secdot}
\usepackage{setspace}
\usepackage{wasysym}
\usepackage[us,12hr]{datetime}
\usepackage{bm}
\usepackage{pdfsync}
\usepackage{rccol}
\usepackage{mathrsfs}
\usepackage{epsfig}
\usepackage[margin=1in,paper=letterpaper]{geometry}
\usepackage{url}
\usepackage{multibib}
\newcites{appendix}{Online Appendix References}
\usepackage{enumitem}
\usepackage{stackrel}
\usepackage{soul}
\usepackage{appendix}
\usepackage{titlesec}

\newtheorem{thm}{Theorem}[section]

\newtheorem{lem}[thm]{Lemma}
\newtheorem{prop}[thm]{Proposition}

{\theoremstyle{THkey}}

\OneAndAHalfSpacedXI % current default line spacing
\usepackage{natbib}
 \bibpunct[, ]{(}{)}{,}{a}{}{,}%
 \def\bibfont{\small}%
 \def\bibsep{\smallskipamount}%

\EquationsNumberedThrough    % Default: (1), (2), ...
\MANUSCRIPTNO{} 

\newcommand{\ts}{{\,}}
\newcommand{\Acal}{{\mathcal A}}
\newcommand{\Fcal}{{\mathcal F}}
\newcommand{\Jcal}{{\mathcal J}}
\newcommand{\Lcal}{{\mathcal L}}
\newcommand{\Mcal}{{\mathcal M}}
\newcommand{\Ncal}{{\mathcal N}}
\newcommand{\Pcal}{{\mathcal P}}
\newcommand{\Qcal}{{\mathcal Q}}
\newcommand{\Tcal}{{\mathcal T}}
\newcommand{\Ucal}{{\mathcal U}}
\newcommand{\Vcal}{{\mathcal V}}
\newcommand{\Xcal}{{\mathcal X}}
\newcommand{\Ycal}{{\mathcal Y}}
\newcommand{{\xhat}}{{\widehat x}}
\newcommand{{\yhat}}{{\widehat y}}
\newcommand{{\zhat}}{{\widehat z}}
\newcommand{\xtilde}{{\widetilde x}}
\newcommand{\ytilde}{{\widetilde y}}
\newcommand{{\pbar}}{{\overline p}}
\newcommand{{\sbar}}{{\overline s}}
\newcommand{{\ubar}}{{\overline u}}
\newcommand{{\xbar}}{{\overline x}}
\newcommand{{\ybar}}{{\overline y}}
\newcommand{{\zbar}}{{\overline z}}
\newcommand{{\Cbar}}{{\overline C}}
\newcommand{{\Pbar}}{{\overline P}}
\newcommand{{\Rbar}}{{\overline R}}
\newcommand{{\alphabar}}{{\overline \alpha}}
\newcommand{{\betabar}}{{\overline \beta}}
\newcommand{{\nubar}}{{\overline \nu}}
\newcommand{{\thetabar}}{{\overline \theta}}
\newcommand{{\zetabar}}{{\overline \zeta}}
\newcommand{{\Deltabar}}{{\overline \Delta}}
\newcommand{{\Brm}}{\text{\rm B}}
\newcommand{{\Grm}}{\text{\rm G}}
\newcommand{{\Nrm}}{\text{\rm N}}
\newcommand{{\Urm}}{\text{\rm U}}
\newcommand{{\Xrm}}{\text{\rm X}}
\newcommand{{\Yrm}}{\text{\rm Y}}
\newcommand{{\Fs}}{{\text{\sf F}}}
\newcommand{{\Ss}}{{\text{\sf S}}}
\newcommand{{\sigmahat}}{{\widehat \sigma}}
\newcommand{\avec}{{\bm a}}
\newcommand{\cvec}{{\bm c}}
\newcommand{\evec}{{\bm e}}

\newcommand{\uvec}{{\bm u}}
\newcommand{\xvec}{{\bm x}}
\newcommand{\yvec}{{\bm y}}
\newcommand{\wvec}{{\bm w}}
\newcommand{\zvec}{{\bm z}}

\newcommand{\xvechat}{\widehat{\bm x}}
\newcommand{\yvechat}{\widehat{\bm y}}
\newcommand{\zvechat}{\widehat{\bm z}}

\newcommand{\xvectilde}{\widetilde{\bm x}}
\newcommand{\yvectilde}{\widetilde{\bm y}}
\newcommand{\zvectilde}{\widetilde{\bm z}}
\newcommand{\pvecbar}{\overline{\bm p}}
\newcommand{\uvecbar}{\overline{\bm u}}
\newcommand{\wvecbar}{\overline{\bm w}}
\newcommand{\xvecbar}{\overline{\bm x}}
\newcommand{\yvecbar}{\overline{\bm y}}
\newcommand{\zvecbar}{\overline{\bm z}}

\newcommand{\alphavecbar}{\overline{\bm \alpha}}
\newcommand{\Prmvecbar}{\overline{\bf P}}
\newcommand{{\Prmbar}}{{\overline{\mathrm{P}}}}

\newcommand{\avail}{{\text{\sf avail}}}
\newcommand{\availp}{{\text{\sf avail}}^{\text{\sf ep}}}
\newcommand{\apx}{{\text{\sf apx}}}
\newcommand{\apxp}{{\text{\sf apx}}^{\text{\sf ep}}}
\newcommand{\thetabarp}{{\thetabar}^{\text{\sf ep}}}
\newcommand{\opt}{{\text{\sf opt}}}
\newcommand{\lp}{{\text{\sf LP}}}
\newcommand{\reve}{\overline{\text{\sf rev}}}
\newcommand{\capa}{\overline{\text{\sf cap}}}
\newcommand{\reg}{\text{\sf R}}
\newcommand{\dis}{\text{\sf D}}
\renewcommand{\qed}{\hfill \mbox{\raggedright \rule{0.1in}{0.1in}}}
\newcommand{\ind}[1]{{\bf 1}_{(#1)}}
\newcommand{\ru}[1]{\lceil #1 \rceil}

\begin{document}
%%%%%%%%%%%%%%%%

\RUNAUTHOR{\normalfont Li, Rusmevichientong, Topaloglu, \today ;}
\RUNTITLE{\normalfont{Inter-Temporal Price Constraints in Dynamic Pricing}}

\TITLE{\large 
Inter-Temporal Price Constraints in Dynamic Pricing: Performance Guarantees Under Price Monotonicity and Promotion Fatigue \vspace{-4mm}}

\ARTICLEAUTHORS{
\AUTHOR{\mbox{\normalsize Weiyuan Li$^1$, Paat Rusmevichientong$^2$, Huseyin Topaloglu$^1$}}
\AFF{
\scriptsize $^1$School of Operations Research and Information Engineering, Cornell Tech, New York, NY 10044, USA
\\
\scriptsize $^2$Marshall School of Business, University of Southern California, Los Angeles, CA 90089, USA}
\vspace{-0.5mm}
\EMAIL{\scriptsize wl425@cornell.edu, rusmevic@marshall.usc.edu, topaloglu@orie.cornell.edu}
\\
\vspace{-1mm}
\normalsize September 22, 2026
}

\ABSTRACT{%
We study dynamic pricing problems under inter-temporal price constraints. We have resources with limited capacities. At each time period, we decide which products to make available and what prices to charge for the available products. The sale probability for a product depends on its price.~If we make a sale for a product, then we collect a revenue reflecting the price and consume the capacities of a combination of resources. We work with two types of inter-temporal constraints. In price monotonicity, the prices charged for a product at different time periods have to be monotone. In promotion fatigue, we can discount a product at most once over each time interval of a fixed length. Computing the optimal policy is intractable. We use fluid approximations to construct policies. Traditionally, policies from fluid approximations make randomized decisions at each time period by following an optimal solution to the fluid approximation, but such  randomized decisions easily violate price monotonicity or promotion fatigue constraints. We develop policies that sample price paths according to an optimal solution to the fluid approximation, while satisfying the inter-temporal constraints. Letting $c_{\min}$ be the smallest initial capacity of a resource and $L$ be the maximum number of resources used by a product, our policies have a performance guarantee of $\max\Big\{ \frac{1}{8L}, \ts \frac{1}{2} - \sqrt{\frac{\log c_{\min}}{2 \ts c_{\min}}} - \frac{L}{c_{\min}}\Big\}$. Thus, treating the number of resources used by a product as a constant, our policies have a constant-factor performance guarantee.~Under large resource capacities, our policies are guaranteed to obtain at least half of the optimal total expected revenue. The latter performance guarantee is tight in the sense that no policy can, in general, obtain more than half of the optimal objective value of the fluid approximation even under large resource capacities.
In our policies, we focus on two of the price paths sampled for each product. We also give a policy with an ex-post performance guarantee of $1 - \sqrt{\frac{2\log c_{\min}}{ c_{\min}}} - \frac{L + \Deltabar}{c_{\min}}$, where the parameter $\Deltabar$ depends on the difference between the total expected capacity consumptions of the two price paths. The latter policy is asymptotically optimal under large resource capacities, as long as the parameter $\Deltabar$ scales more slowly than $c_{\min}$. Lastly, we unify our approach to open the path for extensions to other inter-temporal price constraints.}

% Fill in data. If unknown, outcomment the field
%\KEYWORDS{submodular optimization, approximate dynamic programming, online retail} 
%\HISTORY{This paper was first submitted on April 12, 1922 and has been with the authors for 83 years for 65 revisions.}

\maketitle

\vspace{-9mm}

\section{Introduction}

\vspace{-1mm}

Inter-temporal price constraints that link the prices charged for a product at different time periods in the selling horizon frequently appear in dynamic pricing applications. Fashion retailers may charge prices for a product that are decreasing over time, so that early purchasers pay a premium. On the other hand, providers of hospitality services, such as airlines and hotels, may charge prices for a trip or stay itinerary that are increasing over time, so that early purchasers enjoy a discount. Supermarkets concerned about customers being accustomed to discounts may limit the number of price promotions on a product over a certain interval of time. Retailers may set a promotion budget for a product, in which case, the total number of discounted units sold over a certain duration of time may need to be no larger than the promotion budget. Even without inter-temporal price constraints, dynamic pricing problems are challenging. When choosing the price for a product, one has to carefully balance between charging a high price, in which case, the revenue from the immediate sale would be larger but there would be a lower likelihood that the product is sold, against charging a low price, in which case, there would be a lower likelihood that the product goes unsold but the revenue from the immediate sale would be smaller. This tradeoff is further complicated by the fact that customers arriving over different portions of the selling horizon may have different price sensitivities as, for example, early purchasers of hospitality services, who tend to be leisure travelers, may have higher price sensitivities than late purchasers, who tend to be  business travelers.~When these complications are coupled with inter-temporal price constraints, finding good  policies in dynamic pricing applications becomes a challenging task.

\vspace{-0.15mm}

In this paper, we study dynamic pricing problems over a network of resources under \mbox{inter-temporal} price constraints. We have a set of resources with limited capacities. At each time period, we decide which products to make available and what prices to charge for the available products. The probability of making a sale for a product depends on its price. If we make a sale for a product, then we collect a revenue reflecting the price for the product and consume the capacities of a combination of resources that depends on the sold product. We work with two types of inter-temporal price constraints. In price monotonicity, the prices charged for a product at different time periods have to be monotone over time. In promotion fatigue, we can discount a product once over every time interval of a fixed length. Surprisingly, dynamic pricing problems with such inter-temporal price constraints are hardly studied.  Computing the optimal policy is intractable, so we turn to fluid approximations to construct policies with performance guarantees. 

\vspace{-0.15mm}

Policies from fluid approximations traditionally make randomized decisions at each time period by following an optimal solution to the fluid approximation, but making randomized decisions at each time period can easily result in price paths that violate price monotonicity or promotion fatigue constraints. We give fluid approximations under price monotonicity or promotion fatigue constraints, where we impose stochastic dominance only in the distribution of the prices charged at each time period or ensure that only the expected number of times we discount each product over every time interval of a fixed length is at most one. Using an optimal solution to these fluid approximations, however, we show that we can sample price paths each satisfying the inter-temporal constraints with probability one. Focusing on the sampled price paths, we construct policies with performance guarantees under both price monotonicity and promotion fatigue constraints. We unify our approach to facilitate extensions to other inter-temporal price constraints.

\vspace{-0.15mm}

%In other words, we impose price monotonicity constraint only in distribution in the fluid approximation.

{\bf \underline{Main Contributions\phantom{p}\!\!\!}:} We construct fluid approximations under price monotonicity or promotion fatigue constraints. We show that we can use an optimal solution to these fluid approximations to sample price paths satisfying the inter-temporal price constraints. Using these price paths, we develop approximate policies with performance guarantees.

{\underline{\it Fluid Approximations Under Price Monotonicity Constraints}.} We begin by focusing on price monotonicity constraints. We give a fluid approximation under price monotonicity constraints. In our fluid approximation, we only impose the constraint that the distribution of the price for a product at one time period stochastically dominates the one at another time period. 
Thus, it is not immediately clear whether we can use the fluid approximation to obtain price paths that satisfy price monotonicity constraints with probability one. Using an approach akin to common random numbers, we show that we can use an optimal solution to the fluid approximation to sample price paths that satisfy price monotonicity constraints, while ensuring that the marginal distribution of the price for each product at each time period matches that in the optimal solution to the fluid approximation. Letting $n$ be the number of possible price levels for a product and $T$ be the number of time periods in the selling horizon, the number of possible sampled price paths is $O(nT)$. We show that we can re-construct an optimal solution to the fluid approximation by using at most two of the sampled price paths for each product, which becomes critical in our approximate policy. Our approach for going from the fluid approximation to price paths that satisfy price monotonicity constraints with probability one appears to be novel and can unlock other constraints. 

{\underline{\it Approximate Policy and Performance Guarantee}.} Letting $c_{\min}$ be the smallest initial capacity of a resource and $L$ be the maximum number of resources used by a product, we give an approximate policy with a performance guarantee of $\max\Big\{ \frac{1}{8L}, \ts \frac{1}{2} - \sqrt{\frac{\log c_{\min}}{2 \ts c_{\min}}} - \frac{L}{c_{\min}}\Big\}$. Thus, treating the number of resources used by a product as a constant, our approximate policy has a \mbox{constant-factor} performance guarantee.~Under large resource capacities, our approximate policy is guaranteed to obtain at least half of the optimal total expected revenue. The optimal objective value of the fluid approximation gives an upper bound on the optimal total expected revenue. We establish the performance guarantee by comparing the total expected revenue of our approximate policy with the upper bound. The performance guarantee for our approximate policy is tight. In particular, we show that no policy can, in general, obtain more than half of the optimal objective value of the fluid approximation even under large resource capacities. In our approximate policy, we follow one of the two possible price paths for each product that we discuss at the end of the previous paragraph. We give a well-defined rule for selecting the right price path for each product. It is remarkable that we achieve our performance guarantees by following pre-fixed price paths, along with making each product available for purchase through randomized decisions.

{\underline{\it Ex-Post Performance Guarantee}.}  To deepen the understanding of our results, we  give another approximate policy with an ex-post performance guarantee of $1 - \sqrt{\frac{2\log c_{\min}}{ c_{\min}}} - \frac{L + \Deltabar}{c_{\min}}$, where the parameter $\Deltabar$ depends on the difference between the total expected capacity consumptions of the two price paths for each product. The latter policy is asymptotically optimal under large resource capacities, as long as the parameter $\Deltabar$ scales more slowly than $c_{\min}$. While the ex-post performance guarantee is interesting by itself, it also pinpoints the critical problem features to drive asymptotic optimality and the reasons for not necessarily achieving asymptotic optimality in general. 

{\underline{\it Promotion Fatigue Constraints}.} We move on to giving approximate policies under promotion fatigue constraints. Our development deviates from the earlier one at several points. We give a fluid approximation under promotion fatigue constraints, where we only impose that the expected number of times that each product is offered at the promoted price over every interval of a fixed length is at most one. In other words, we impose promotion fatigue constraints only in expectation in the fluid approximation. We show that we can use an optimal solution to the fluid approximation to sample price paths that satisfy promotion fatigue constraints with probability one. The approach that we use to sample price paths under promotion fatigue constraints is dramatically different from the one under price monotonicity constraints. Nevertheless, we show that we can still use at most two of the sampled price paths for each product to re-construct an optimal solution to the fluid approximation. Using the two price paths, we construct approximate policies that attain the same performance guarantees as under price monotonicity constraints. 

{\underline{\it Unifying Our Approach}.} The fluid approximation under price monotonicity constraints imposes a stochastic dominance relationship on the prices charged at different time periods, whereas the fluid approximation under promotion fatigue constraints imposes promotion fatigue constraints in expectation. Furthermore, our approach for sampling price paths from the two fluid approximations is different as well. Nevertheless, we give a unifying framework for the two inter-temporal price constraints. In particular, we show that if we can use a polytope with integer extreme points to represent the feasible price paths under some inter-temporal constraints, then we can give a fluid approximation under the corresponding inter-temporal constraints and develop approximate policies with performance guarantees. We establish that we indeed can represent the feasible price paths under both price monotonicity and promotion fatigue constraints by using a polytope with integer extreme points. There can be other inter-temporal price constraints that admit similar characterizations, in which case, we can make extensions to such constraints.

{\underline{\it Computational Experiments}.} We test the performance of our approximate policies on synthetically generated datasets, as well as on datasets based on the bookings at an urban hotel, where we focus on imposing monotonicity constraints on the prices. The performance of our approximate policies is dramatically better than what is indicated by the theoretical performance guarantees. In particular, the total expected revenues from our approximate policies are within a few percentage points of the upper bounds on the optimal total expected revenue provided by the fluid approximation, even when the capacities of the resources are not dramatically large.

{\underline{\it Discussion of Our Model and Results}.} Inter-temporal price constraints are ubiquitous in practice, but there is, to our knowledge, hardly any work on practical policies under such constraints. Our work is an attempt to close this critical gap. Fluid approximations have been the workhorse for constructing practical policies for dynamic pricing problems, but the policies from fluid approximations make randomized decisions at each time period according to an optimal solution to the fluid approximation. Once we start flipping coins to decide what price to charge for a product at different time periods, it becomes difficult to satisfy inter-temporal price constraints. Our approach for sampling price paths that satisfy the inter-temporal price constraints, while ensuring that the marginal distribution of the price for each product at each time period matches that in the optimal solution to the fluid approximation, appears to be novel.

In our model, if we make a sale for a product, then we consume the capacities of a combination of resources. The probability of getting a demand for a product depends on the price of the product. In this sense, our model is a natural extension of the standard network revenue management setting to pricing decisions. The inter-temporal price constraints link the prices for a particular product at different time periods. In other words, the inter-temporal price constraints do not link the prices for different products at different time periods. Such constraints allow us to capture a variety of important applications, but there are natural applications that involve inter-temporal price constraints across  products. Nevertheless, it is surprising that there is hardly any work that addresses the class of problems that we work on and it is remarkable that we can obtain performance guarantees by focusing only on a pre-fixed price path for each product.

{\bf \underline{Related Literature\phantom{p}\!\!\!}:} There is work on developing fluid approximations for dynamic pricing problems over a network of resources and extracting policies with performance guarantees, but this work does not consider inter-temporal price constraints; see \cite{GaRy94}, \cite{GaRy97}, \cite{MaMe06}, \cite{ErTo11}, \cite{MaDa22}. Because there are no inter-temporal price constraints, it is not difficult to extract randomized policies from such fluid approximations. Moreover, the asymptotic regimes in these papers require the resource capacities and length of the selling horizon to grow large with the same rate, whereas our asymptotic regime only requires the resource capacities to grow large. By solving the fluid approximation multiple times over the selling horizon, one can get even better performance guarantees in the same regime; see \cite{Ja14} and \cite{WaWa22}.

Related work studies restrictions on the timing or cost of price adjustments, under possibly unknown demand functions that need to be learned. In two connected papers, \cite{FeGa95} study the optimal timing of a single price change, whereas \cite{FeGa00} find the optimal timing of multiple price changes among a given menu of allowable price paths. \cite{ChJa16} design heuristics that require a small number of price changes. \cite{MaLe21} study price and assortment optimization under a pre-fixed calendar of offered prices or assortments and give performance guarantees. \cite{AhRy26} focus on a pricing problem with a limited number of price change opportunities, considering only a single product, but a rich demand model that allows the demand to be dependent on past sales. A dual stream of work to ours studies multi-product static pricing problems with constraints linking the prices of different products; see \cite{RuVa06}, \cite{KeLe13}, \cite{DaTo17}, \cite{HaSu19}, \cite{SuGa21}.

Going back to dynamic pricing problems but with a single product, focusing on the case where the demand function is unknown but the price for the product has to be monotonically decreasing over time,  \cite{JiLi22} give regret bounds when the demand has a parametric form. Considering the case of making markdown decisions with stationary demand functions, \cite{ChJa24} give policies with logarithmic loss in the scaling factor when the initial inventory of the product and length of the selling horizon are both scaled with the same rate.  Contrasting our work with the existing literature, we can work with non-stationary demand functions and multiple products that consume capacities of overlapping sets of resources. Thus, the pricing decisions are coupled through both the inter-temporal price constraints and overlapping sets of resources used by the products. Our approximate policies follow a pre-determined price path for each product, only changing the availability of the products in real-time, but otherwise adhering to a pre-determined price path when the product is offered. Moreover, our approach for using an optimal solution to the fluid approximation to construct price paths that satisfy the inter-temporal price constraints does not seem to have appeared in the existing literature. Thus, while our fluid approximations impose inter-temporal price constraints only in the distributional or expectation sense, we can indeed use the fluid approximations to satisfy the inter-temporal price constraints with probability one and obtain performance guarantees for the resulting approximate policies.

%A distinct but related literature also exists around single-product pricing papers with unknown demand and limited pricing opportunities; see \cite{BiCh25}

{\bf \underline{Organization}:} In Section \ref{sec:form}, we formulate the dynamic pricing problem under price monotonicity constraints. In Section \ref{sec:fluid}, we construct the corresponding fluid approximation. In Section \ref{sec:paths}, we show that we can sample price paths that satisfy price monotonicity constraints, while ensuring that the marginal distributions of the prices match those in an optimal solution to the fluid approximation. In Section \ref{sec:policy}, we develop our approximate policy and its performance guarantee. In Section \ref{sec:expost_policy}, we construct our approximate policy with the ex-post performance guarantee. In Section \ref{sec:fatigue}, we give the corresponding results under promotion fatigue constraints. In Section \ref{sec:gen_const}, we unify our approach to open the path for extensions to other inter-temporal price constraints. In Section \ref{sec:exp}, we give computational experiments. In Section \ref{sec:conc}, we conclude.

\newpage 

\section{Pricing Under Price Monotonicity Constraints}
\label{sec:form}

\vspace{-0mm}

The set of resources is $\Lcal$. The initial capacity of resource $i$ is $c_i$. The set of products is $\Jcal$. We capture the resources used by product $j$ by the vector $\avec_j = (a_{ij} : i \in \Lcal) \in \{0,1\}^{|\Lcal|}$, where $a_{ij} = 1$ if and only if product $j$ uses resource $i$. The set of price levels for a product is $\Ncal = \{1,\ldots,n\}$.~The price corresponding to price level $\ell$ for product $j$ is $r_j^\ell$. Thus, if we charge price level $\ell$ for product $j$ and make a sale for this product, then we obtain a revenue of $r_j^\ell$. We index the price levels in increasing order of revenues, so $r_j^1 \leq r_j^2 \leq \ldots \leq r_j^n$. The set of time periods in the selling horizon is $\Tcal = \{1,\ldots,T\}$. There is at most one customer arrival at each time period. The customer arriving at time period $t$ is interested in purchasing product $j$ with probability $\theta_{jt}$. If we charge price level $\ell$ for product $j$, then the customer purchases the product with probability $\gamma_{jt}^\ell$. In this case, setting $\lambda_{jt}^\ell = \theta_{jt} \ts \gamma_{jt}^\ell$, if we charge price level $\ell$ for product $j$ at time period $t$, then we make a sale for the product with probability~$\lambda_{jt}^\ell$.~At~each time period, we decide which products to make available for purchase and what prices to charge for the available products. The prices that we charge for each product have to be monotonically increasing over the time periods. If a product is made available at a time period, then its price must be at least as large as the largest price charged at the previous time periods. Our goal is to maximize the total expected revenue over the selling horizon, while ensuring that the prices charged for each product are monotonically increasing. 

We give a dynamic program to compute the optimal policy. 
%To ensure that the prices that we charge for a product are monotone over time periods, we keep track of the current lower bound on the price that we can charge for each product. After we make the pricing decision for a product a a particular time period, we ensure that the lower bound on the price at the subsequent time period is at least as large as the price charged at the current time period, as well as the current lower bound on the price 
To capture the state of the system at the beginning of a generic time period, we keep the remaining capacities of the resources by using the vector $\wvec = (w_i : i \in \Lcal) \in \mathbb Z_+^{|\Lcal|}$, where $w_i$ is the remaining capacity of resource~$i$. In addition to keeping track of the remaining resource capacities, to ensure that the prices for a product are monotonically increasing over time, we represent the lower bounds on the prices that we can charge for the products  at the current time period by using the vector \mbox{$\zvec = (z_j^\ell : j \in \Jcal,~\ell \in \Ncal) \in \{0,1\}^{|\Jcal| \times n}$}, where $z_j^\ell = 1$ if and only if price level $\ell$ is the lower bound on the price for product~$j$.~If $z_j^\ell = 1$, then we must either charge price level $\ell$ or larger price levels for product~$j$ or not make the product available for purchase at all. In our dynamic program, we use the pair $(\wvec,\zvec)$ as the state of the system. To capture the decisions at a generic time period, we use the vector \mbox{$\xvec = (x_j^\ell : j \in \Jcal,~\ell \in \Ncal ) \in \{0,1\}^{|\Jcal|\times n}$}, where $x_j^\ell = 1$ if and only if we charge price level $\ell$ for product~$j$.~We have $\sum_{\ell \in \Ncal} x_j^\ell \leq 1$  for all $j \in \Jcal$ so that we can charge at most one price level for a product at a particular time period.~Having $\sum_{\ell \in \Ncal} x_j^\ell =0$ implies that we do not make product~$j$ available for purchase. Also, we use the vector $\yvec=  (y_j^\ell : j \in \Jcal,~\ell \in \Ncal) \in \{0,1\}^{|\Jcal| \times n}$ to represent the lower bounds on the prices that we can charge for the products after the pricing decisions at the current time period, where $y_j^\ell=1$ if and only if price level $\ell$ is the lower bound on the price for product $j$. We use the pair $(\xvec,\yvec)$ as the decisions in our dynamic program. If the state of the system at time period $t$ is $(\wvec,\zvec)$, then the set of feasible decisions is given by 
\begin{align}
\Fcal_t(\wvec,\zvec) = \Bigg\{& (\xvec,\yvec) \in \{0,1\}^{2 \times |\Jcal| \times n} : \sum_{j \in \Jcal} \sum_{\ell \in \Ncal} a_{ij} \ts \lambda_{jt}^\ell \ts x_j^\ell \leq w_i~~\forall \ts i \in \Lcal,~~~\sum_{\ell \in \Ncal} x_j^\ell \leq 1~~\forall \ts j \in \Jcal,
\nonumber
\\
&
z_j^\ell + \sum_{k=1}^{\ell-1} x_j^k \leq 1~~\forall \ts j \in \Jcal,~\ell \in \Ncal,~~~
y_j^\ell = x_j^\ell + \Big( 1 - \sum_{k \in \Ncal} x_j^k \Big) \ts z_j^\ell ~~\forall \ts j \in \Jcal,~\ell \in \Ncal
 \Bigg\}.
 \label{eqn:feas_monotone}
\end{align}

By the first constraint, if we charge a price for a product that generates demand for the product and the product uses the capacity of resource $i$, then we must have remaining capacity for the resource.  The second constraint ensures that we choose at most one price level for product $j$. If we have $\sum_{\ell \in \Ncal} x_j^\ell < 1$, then we do not make product $j$ available for purchase. In the third constraint, if the current lower bound for the price of product $j$ is price level $\ell$, then we cannot charge a price smaller than price level $\ell$ for product $j$. In the fourth constraint, if we charge price level $\ell$ for product~$j$ or we do not make product $j$ available for purchase and price level $\ell$ is the current lower bound on the price for product $j$, then price level $\ell$ is the lower bound on the price of product~$j$ after making the pricing decisions at the current time period. The initial resource capacities are given by the vector $\cvec = (c_i : i \in \Lcal)$. Letting $\evec^1 = (e_j^\ell : j \in \Jcal,~\ell \in \Ncal) \in \{0,1\}^{|\Jcal| \times n}$ be such that $e_j^\ell = 1$ if and only if $\ell = 1$, the initial state is $(\cvec,\evec^1)$. Using the boundary condition that $J_{T+1} = 0$, we can compute the optimal policy through the dynamic program 
\begin{align}
J_t(\wvec,\zvec)  = \! \!\!\max_{(\xvec,\yvec) \in \Fcal_t(\wvec,\zvec)} \! \Bigg\{ \sum_{j \in \Jcal} \sum_{\ell \in \Ncal} \lambda_{jt}^\ell \ts x_j^\ell \Big\{ r_j^\ell + J_{t+1}(\wvec - \avec_j , \yvec) \Big\} + \Big(1 - \sum_{j \in \Jcal} \sum_{\ell \in \Ncal} \lambda_{jt}^\ell \ts x_j^\ell \Big) \ts J_{t+1}(\wvec,\yvec) \Bigg\}. \!\!
\label{eqn:dp}
\end{align}
Noting that the initial state of the system corresponds to the vector $(\cvec,\evec^1)$, the optimal total expected revenue is given by $\opt = J_1(\cvec,\evec^1)$.

We explicitly model the possibility of not making a product available for purchase. In dynamic pricing problems, it is common to assume the presence of a large enough price such that if we charge the large price for a product, then there is no demand for the product. Charging the large price for a product is equivalent to not making the product available for purchase. In our formulation, because we ensure that the price of a product is monotonically increasing over the time periods, charging the large price simply for not making a product available for purchase at a particular time period possibly creates confusion as to whether we have to continue charging the large price at the subsequent time periods. By explicitly modeling the possibility of not making a product available and avoiding the assumption of the presence of a large enough price to shut off demand, it is clear that if we do not make a product available at a particular time period, then we can still offer the product at a subsequent time period, as long as the price that we charge for the product is at least as large as the prices charged at the previous time periods. 

%Also, we represent the lower bound on the price of a product at a subsequent time period as a decision. To maximize the total expected revenue, it is optimal to set the lower bound at the subsequent time period as small as possible. Therefore, even though we represent the lower bound at a subsequent time period as a decision, it is deterministically fixed by the pricing decision and lower bound at the current time period.

\section{Fluid Approximation}
\label{sec:fluid}

\vspace{-2mm}

The dynamic programming formulation in the previous section involves a high-dimensional state variable, so it is difficult to compute the optimal policy. We give a fluid approximation that allows us to come up with policies with performance guarantees. Our fluid approximation is a linear program, where we ensure that the probability distribution of the prices charged for a product at a particular time period stochastically dominates those at earlier time periods. The latter condition serves as a proxy to the requirement that the prices that we charge for a product have to be monotonically increasing over time. Even though our fluid approximation only imposes a dominance constraint between the probability distributions of the prices, we will extract a policy from the fluid approximation such that the charged prices are monotone. We use the decision variable $x_{jt}^\ell$ to capture the probability that we charge price level $\ell$ for product $j$ at time period $t$, as well as the decision variable $y_{jt}^\ell$ to capture the probability that price level $\ell$ is the lower bound on the price for product $j$ after making the pricing decisions at time period $t$. Using the vectors of decision variables $\xvec = (x_{jt}^\ell: j \in \Jcal,~\ell \in \Ncal,~t \in \Tcal)$  and $\yvec = (y_{jt}^\ell: j \in \Jcal,~\ell \in \Ncal,~t \in \Tcal)$, we consider the problem 
\begin{align}
Z_\lp^* = \max_{(\xvec,\yvec) \in [0,1]^{2 \times |\Jcal| \times n \times T}}
\Bigg\{ 
&\sum_{t \in \Tcal} \sum_{j \in \Jcal} \sum_{\ell \in \Ncal} r_j^\ell \ts \lambda_{jt}^\ell \ts x_{jt}^\ell ~:~
\sum_{t \in \Tcal} \sum_{j \in \Jcal} \sum_{\ell \in \Ncal} a_{ij} \ts \lambda_{jt}^\ell \ts x_{jt}^\ell \leq c_i \qquad \forall \ts i \in \Lcal,
\nonumber
\\
&\qquad x_{jt}^\ell \leq y_{jt}^\ell \qquad \forall \ts j \in \Jcal,~\ell \in \Ncal,~t \in \Tcal,
\phantom{\sum}
\nonumber
\\
& \qquad
\sum_{k=\ell}^n y_{j,t-1}^k \leq \sum_{k=\ell}^n y_{jt}^k \qquad \forall \ts j \in \Jcal,~\ell \in \Ncal,~t \in \Tcal \setminus \{1\},
\nonumber
\\
& \qquad \sum_{\ell \in \Ncal} y_{jt}^\ell = 1 \qquad \forall \ts j \in \Jcal,~t \in \Tcal \Bigg\}.
\tag{\sf \small Fluid Approximation}
\label{eqn:fluid}
\end{align}

\vspace{-0.5mm}

In the first constraint, we ensure that the total expected capacity consumption of a resource does not exceed its initial capacity. In the second constraint, noting that the price charged for a product has to be monotonically increasing over time, we use the fact that if we charge price level~$\ell$ for product $j$ at time period $t$, then the lower bound on the price of product $j$ after making the pricing decisions at time period $t$ has to be price level~$\ell$. The sense of this constraint is less than or equal to, because price level $\ell$ also becomes the lower bound on the price of product $j$ after making the pricing decisions at time period $t$ when we do not make product $j$ available at time period $t$, but charge price level $\ell$ for product $j$ at an earlier time period. The third constraint follows because the lower bounds on the price of a product have to be monotonically increasing over the time periods, so if the lower bound on the price of product $j$ at time period $t-1$ is price level $\ell$ or larger, then  the lower bound on the price of product $j$ at time period~$t$ must be price level $\ell$ or larger. In the fourth constraint, we ensure that the lower bound on the price of product $j$ at time period $t$ has to follow a valid probability distribution. In the objective function, we compute the total expected revenue over the selling horizon. In the next proposition, we show that the optimal objective value of the \ref{eqn:fluid} is an upper bound on the optimal total expected revenue.

\vspace{-1mm}

\begin{prop}[Upper Bound] 
\label{pro:ub}
Noting that $Z_\lp^*$ is the optimal objective value of the \ref{eqn:fluid} and $\opt$ is the optimal total expected revenue, we have $Z_\lp^* \geq \opt$. 
\end{prop}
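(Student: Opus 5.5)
We will prove the statement by showing that the optimal policy induces a feasible solution to the \ref{eqn:fluid} whose objective value equals $\opt$. Fix an optimal policy from the dynamic program (\ref{eqn:dp}) and let $(\xvec_t,\yvec_t)$ denote the random decisions it takes at time period $t$. Here $\xvec_t = (x_{jt}^\ell : j \in \Jcal,~\ell \in \Ncal)$ and $\yvec_t$ is the resulting lower-bound vector. These are random because the state $(\wvec_t,\zvec_t)$ depends on past sales. We define $\xhat_{jt}^\ell = \mathbb E[x_{jt}^\ell]$ and $\yhat_{jt}^\ell = \mathbb E[y_{jt}^\ell]$. Both lie in $[0,1]$ since the underlying decisions are binary. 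Our plan is to check each constraint of the \ref{eqn:fluid} for $(\xhat,\yhat)$ and then compare objective values.

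\textbf{Objective.} The decisions at time $t$ are measurable with respect to the history before the arrival at time $t$. Conditional on that history, a sale of product $j$ at level $\ell$ therefore happens with probability $\lambda_{jt}^\ell x_{jt}^\ell$. Taking expectations and summing over $t$ gives
\begin{align*}
\opt = \sum_{t\in\Tcal}\sum_{j\in\Jcal}\sum_{\ell\in\Ncal} r_j^\ell \lambda_{jt}^\ell \xhat_{jt}^\ell ,
\end{align*}
which is exactly the objective of the \ref{eqn:fluid} at $(\xhat,\yhat)$.

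\textbf{Capacity constraint.} Let $D_{jt}$ indicate a sale of product $j$ at time $t$. The policy never sells product $j$ without remaining capacity on every resource it uses, so $\sum_{t}\sum_j a_{ij} D_{jt} \le c_i$ holds sample-path-wise. Taking expectations and using $\mathbb E[D_{jt}] = \sum_\ell \lambda_{jt}^\ell \xhat_{jt}^\ell$ yields the first constraint. If the paper's feasibility condition on $\wvec$ is read loosely, the almost-sure bound on total consumption still follows from capacities being integral and each sale consuming one unit of every resource it uses.

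\textbf{Price-path constraints.} These hold pathwise, and linearity of expectation transfers them to $(\xhat,\yhat)$.
\begin{itemize}
\item From $y_j^\ell = x_j^\ell + (1-\sum_k x_j^k) z_j^\ell \ge x_j^\ell$, we get $\xhat_{jt}^\ell \le \yhat_{jt}^\ell$.
\item By induction from the initial state $\evec^1$, each $\yvec_t$ has exactly one entry equal to one for each product $j$, so $\sum_\ell \yhat_{jt}^\ell = 1$.
\item For the dominance constraint, note that $\zvec_t = \yvec_{t-1}$. If product $j$ is not offered at time $t$, then $\yvec_{jt} = \zvec_{jt}$. If it is offered at level $m$, the third constraint in (\ref{eqn:feas_monotone}) forces $m \ge$ the current lower-bound level. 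In both cases the level indicated by $\yvec_{jt}$ is at least the level indicated by $\yvec_{j,t-1}$. This gives $\sum_{k\ge \ell} y_{j,t-1}^k \le \sum_{k \ge \ell} y_{jt}^k$ pathwise for every $\ell$, and taking expectations gives the dominance constraint.
\end{itemize}

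Hence $(\xhat,\yhat)$ is feasible and $Z_\lp^* \ge \opt$. The only delicate step is the conditioning argument behind the objective and capacity computations: we must be careful that $x_{jt}^\ell$ is independent of the arrival at time $t$. This holds because decisions are made before observing the customer at time $t$.
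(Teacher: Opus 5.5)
Your proposal is correct and takes essentially the paper's approach: take expectations of the optimal policy's binary pricing decisions and of the induced lower-bound indicators, verify each constraint of the fluid approximation pathwise before taking expectations, and match the objective using the fact that a sale at level $\ell$ occurs with probability $\lambda_{jt}^\ell$ given that level $\ell$ is charged. The only difference is cosmetic: for the dominance constraint the paper runs a three-case ``preservation of partial sums'' argument that appeals to price monotonicity over all earlier periods, whereas you use the one-step feasibility condition $z_j^\ell + \sum_{k<\ell} x_j^k \le 1$ together with $\zvec_t = \yvec_{t-1}$, which is slightly more direct.
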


\vspace{-1mm}

The proof is in Appendix \ref{sec:ub}. In the proof, we use the decisions of the optimal policy to construct a feasible solution to the \ref{eqn:fluid} such that this solution provides an objective value of $\opt$ for the \ref{eqn:fluid}. In this case, the optimal objective value of the \ref{eqn:fluid} must be at least $\opt$. In particular, we define the Bernoulli random variable $\Xrm_{jt}^\ell$ such that $\Xrm_{jt}^\ell =1$ if and only if the optimal policy charges price level $\ell$ for product $j$ at time period $t$. Furthermore, we define the random variable $\Yrm_{jt}^\ell$ recursively as $\Yrm_{jt}^\ell = \Xrm_{jt}^\ell + (1-\sum_{k \in \Ncal} \Xrm_{jt}^k) \ts \Yrm_{j,t-1}^\ell$ with the boundary condition that $\Yrm_{j0}^1 =1$ and $\Yrm_{j0}^\ell = 0$ for all $\ell \in \Ncal \setminus \{1\}$. In this case, we show that the solution $(\xvecbar,\yvecbar)$ with $\xbar_{jt}^\ell = \mathbb E\{ \Xrm_{jt}^\ell\}$ and $\ybar_{jt}^\ell = \mathbb E\{ \Yrm_{jt}^\ell\}$ for all $j \in \Jcal$, $\ell \in \Ncal$ and $t \in \Tcal$ is feasible to the \ref{eqn:fluid} and provides an objective value of $\opt$ for the \ref{eqn:fluid}. Because the optimal objective value of the \ref{eqn:fluid} is an upper bound on the optimal total expected revenue, to establish a performance guarantee for our approximate policy, we will compare the total expected revenue of our approximate policy with the optimal objective value of the \ref{eqn:fluid}. If the total expected revenue of our approximate policy exceeds a certain fraction of the upper bound on the optimal total expected revenue, then the total expected revenue of our approximate policy exceeds the same fraction of the optimal total expected revenue as well.

An important question is how we can extract an approximate policy from the \ref{eqn:fluid} while making sure that the prices that we charge for a product are monotonically increasing over time. A simple approach for extracting policies from the fluid approximations is to use randomization based on an optimal solution to the fluid approximation. Using $(\xvecbar,\yvecbar)$ to denote an optimal solution to the \ref{eqn:fluid}, if we follow this approach for our fluid approximation, then our approximate policy would charge price level $\ell$ for product $j$ at time period $t$ with probability $\xbar_{jt}^\ell$, but randomizing the decisions of the approximate policy in this way can potentially end up with a price path for a product that is not monotonically increasing over the time periods.~One of our contributions is to come up with a strategy to sample price paths by using an optimal solution to the \ref{eqn:fluid}, while ensuring that these price paths are monotonically increasing over the time periods. We give this sampling strategy in the next section. Coming up with such a sampling strategy is not difficult by using common random numbers.~More~surprisingly, however, we will show that our sampling strategy comes up with at most two price paths for each product.~Using one of these price paths for each product, while not offering the product at each time period with a certain probability, we construct our approximate policy.

\newpage
~
\vspace{-16mm}
\section{Sampling Price Paths from the Fluid Approximation}
\label{sec:paths}

\vspace{0.0mm}

We give an approach to sample the prices for each product by using an optimal solution to the \ref{eqn:fluid} so that the prices for a product are monotonically increasing over the time periods. This sampling approach forms an important part of our approximate policy.~We use $(\xvecbar,\yvecbar)$ to denote an optimal solution to the \ref{eqn:fluid}. Letting $\text{\sf Unif}$ be the uniform random variable over the interval $[0,1)$, for product $j$, we define the random price path \mbox{$\Prmvecbar_j = (\Prmbar_{jt} : t \in \Tcal) \in \Ncal^T$} such that $\Prmbar_{jt} = \ell$ if and only if $\sum_{k=1}^{\ell-1} \ybar_{jt}^k \leq \text{\sf Unif} < \sum_{k=1}^\ell \ybar_{jt}^k$. By the fourth constraint in the \ref{eqn:fluid}, $\sum_{\ell \in \Ncal} \ybar_{jt}^\ell =1$, so the marginal distribution of the price levels in the random price path $\Prmvecbar_j$ satisfies $\mathbb P \{ \Prmbar_{jt} = \ell \} = \ybar_{jt}^\ell$. Furthermore, we use the same uniform random variable $\text{\sf Unif}$ to sample the price level for product $j$ at different time periods. Subtracting both sides of the third constraint from one, we have $\sum_{k=1}^{\ell-1} \ybar_{j,t-1}^k \geq \sum_{k=1}^{\ell-1} \ybar_{jt}^k$ for all $\ell \in \Ncal$ and \mbox{$t \in \Tcal \setminus \{1\}$}.~In this case, if we have $\Prmbar_{j,t-1} = \ell$ so that $\sum_{k=1}^{\ell-1} \ybar_{j,t-1}^k \leq \text{\sf Unif} < \sum_{k=1}^\ell \ybar_{j,t-1}^k$, then we also have \mbox{$\text{\sf Unif} \geq \sum_{k=1}^{\ell-1} \ybar_{j,t-1}^k  \geq \sum_{k=1}^{\ell-1} \ybar_{jt}^k$}, so we get $\Prmbar_{jt} \geq \ell$. Thus, the price levels in the random price path $\Prmvecbar_j$ are monotone, so $\Prmbar_{j1} \leq \Prmbar_{j2} \leq \ldots \leq \Prmbar_{jT}$ with probability one. At the top of Figure \ref{fig:paths}, we show the values of $\{\ybar_{j,t-1}^\ell : \ell \in \Ncal\}$ and $\{ \ybar_{jt}^\ell : \ell \in \Ncal\}$ for a case with $n=4$. Note that $\sum_{k=1}^\ell \ybar_{j,t-1}^k \geq \sum_{k=1}^\ell \ybar_{jt}^k$ for all $\ell = 1,\ldots,4$. For the realization of the uniform random variable $\text{\sf Unif}$ with dotted lines, we have $\Prmbar_{j,t-1} = 2$ and $\Prmbar_{jt} = 3$ in the figure. 

\vspace{0.1mm}

There are $O(nT)$ possible realizations of the random price path $\Prmvecbar_j$. In particular, the realization of $\Prmbar_{jt}$ does not change as long as the uniform random variable $\text{\sf Unif}$ takes a value in the interval  $[\sum_{k=1}^{\ell-1} \ybar_{jt}^k , \sum_{k=1}^\ell \ybar_{jt}^k)$. For all $\ell \in \Ncal$ and $t \in \Tcal$, we collect the end points of such intervals to obtain the set of points $\{ \sum_{k=1}^\ell \ybar_{jt}^k : \ell \in \Ncal,~t \in \Tcal\} \cup \{0\}$, drop the duplicates and sort the remaining ones in increasing order to obtain the set of points $\{\nubar_j^q : q = 0,1,\ldots,m\}$ with $0 = \nubar_j^0 < \nubar_j^1 < \ldots < \nubar_j^m = 1$ and $m = O(nT)$. In this case, the realization of none of the price levels in the random price path $\Prmvecbar_j = (\Prmbar_{jt} : t \in \Tcal)$ changes as long as the uniform random variable $\text{\sf Unif}$ takes a value in one of the intervals in the collection \mbox{$\{[\nubar_j^{q-1},\nubar_j^q): q = 1,\ldots,m\}$}, which implies that the number of possible realizations of the random price path $\Prmvecbar_j$ is as large as the number of intervals in the collection \mbox{$\{[\nubar_j^{q-1},\nubar_j^q): q = 1,\ldots,m\}$}. At the bottom of Figure \ref{fig:paths}, we show the set of points $\{ \nubar_j^q : q = 0,1,\ldots,m\}$ for a case with \mbox{$\Tcal = \{t-1,t\}$}.~As long as the uniform random variable $\text{\sf Unif}$ takes a value in the interval $[\nu_j^3,\nu_j^4)$, for example, we have $\Prmbar_{j,t-1} = 2$ and $\Prmbar_{jt} = 3$ in the figure. We use $\{ \pvecbar_j^q : q \in \Mcal\}$ with $|\Mcal| = O(nT)$ to denote the possible realizations of the random price path $\Prmvecbar_j$. For each $q \in \Mcal$, we refer to $\pvecbar_j^q$ simply as a price path for product $j$. The price levels in the price path $\pvecbar_j^q$ are $(\pbar_{jt}^q : t \in \Tcal) \in \Ncal^T$. By our construction, each of these price paths is monotone, so $\pbar_{j,t-1}^q \leq \pbar_{jt}^q$. 

\vspace{0.1mm}

\begin{figure}
\begin{center}
\includegraphics{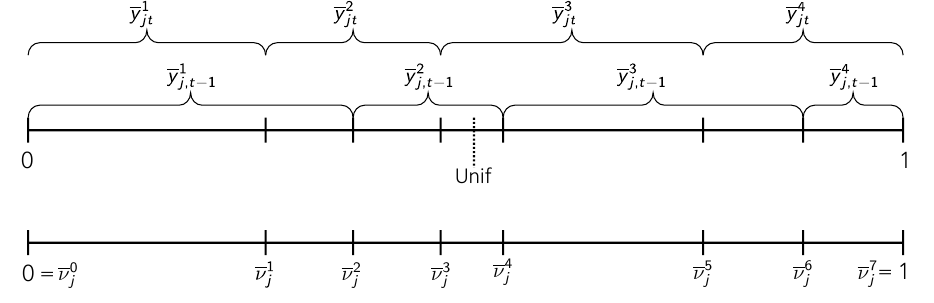}
\caption{Construction of the price path by using an optimal solution to the fluid approximation.}
\label{fig:paths}
\end{center}
\end{figure}

Using the collection of price paths $\{\pvecbar_j^q \!:q \in\! \Mcal\}$ for product $j$, we give an alternative representation of the \ref{eqn:fluid}. We use the vector of decision variables $\zvec = (z_j^q : j \in \Jcal,~q \in \mathcal M)$, where $z_j^q$ is the probability of offering price path $q$ for product $j$. Using $\ind{\cdot}$ to denote the indicator function, we consider the linear program given by 
\begin{align}
\max_{(\xvec,\yvec,\zvec) \in [0,1]^{|\Jcal| \ts (2 \times n \times T + |\Mcal|)}}
\Bigg\{ 
&\sum_{t \in \Tcal} \sum_{j \in \Jcal} \sum_{\ell \in \Ncal} r_j^\ell \ts \lambda_{jt}^\ell \ts x_{jt}^\ell ~:~
\sum_{t \in \Tcal} \sum_{j \in \Jcal} \sum_{\ell \in \Ncal} a_{ij} \ts \lambda_{jt}^\ell \ts x_{jt}^\ell \leq c_i \qquad \forall \ts i \in \Lcal,
\nonumber
\\
&\qquad x_{jt}^\ell \leq y_{jt}^\ell \qquad \forall \ts j \in \Jcal,~\ell \in \Ncal,~t \in \Tcal,
\phantom{\sum_n^n}
\nonumber
\\
& \qquad
y_{jt}^\ell = \sum_{q\in \Mcal} \ind{\pbar_{jt}^q = \ell} \ts z_j^q \qquad \forall \ts j \in \Jcal,~\ell \in \Ncal,~t \in \Tcal,
\nonumber
\\
& \qquad \sum_{q \in \mathcal M} z_j^q = 1 \qquad \forall \ts j \in \Jcal \Bigg\}.
\label{eqn:path_fluid}
\end{align}
The interpretation of the decision variables $x_{jt}^\ell$ and $y_{jt}^\ell$ in the problem above is the same as that in the \ref{eqn:fluid}. The first and second constraints in the problem above are the same as those in the \ref{eqn:fluid}. In the third constraint above, the probability that price level $\ell$ is the lower bound on the price for product $j$ after making the pricing decisions at time period $t$ is equal to the probability of choosing a price path for product $j$ that charges price level $\ell$ at time period~$t$.~In the fourth constraint, we ensure that the total probability of choosing a price path for product~$j$ is equal to one. Recall that we construct the collection of price paths $\{ \pvecbar_j^q : q \in \Mcal\}$ for product~$j$ by using an optimal solution to the \ref{eqn:fluid}. In the next proposition, we show that problem (\ref{eqn:path_fluid}) has the same optimal objective value as the \ref{eqn:fluid}. Thus, the decision variables $(z_j^q : q \in \Mcal)$ allow us to choose probabilities with which we should be following each price path for product $j$, while preserving the optimal objective value of the \ref{eqn:fluid}.

\vspace{-1mm}

\begin{prop}[Equivalence of Fluid Approximations]
\label{pro:equivalence}
If $(\xvecbar,\yvecbar,\zvecbar)$ is an optimal solution to problem $(\ref{eqn:path_fluid})$, then $(\xvecbar,\yvecbar)$ is an optimal solution to the \ref{eqn:fluid}.
\end{prop}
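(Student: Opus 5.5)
The plan is to show two inequalities between optimal values, and to check that every feasible solution of problem (\ref{eqn:path_fluid}) projects to a feasible solution of the \ref{eqn:fluid}. Together these give that $(\xvecbar,\yvecbar)$ is optimal for the \ref{eqn:fluid}.

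\emph{Step 1 (the path problem is at least as good).} Let $(\xvec^*,\yvec^*)$ denote the optimal solution of the \ref{eqn:fluid} used to build the paths $\{\pvecbar_j^q : q \in \Mcal\}$. Set $z_j^q = \nubar_j^{q} - \nubar_j^{q-1}$, the length of the interval on which $\Prmvecbar_j = \pvecbar_j^q$. If several intervals happen to give the same path, add their lengths. Then $\sum_{q} z_j^q = 1$. Moreover, $\sum_q \ind{\pbar_{jt}^q = \ell}\, z_j^q = \mathbb P\{\Prmbar_{jt} = \ell\} = y_{jt}^{*\ell}$, which is the marginal computation already carried out in Section \ref{sec:paths}. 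So $(\xvec^*,\yvec^*,\zvec)$ satisfies the third and fourth constraints of (\ref{eqn:path_fluid}). The first two constraints are copied verbatim from the \ref{eqn:fluid}. The objective is unchanged, so the optimal value of (\ref{eqn:path_fluid}) is at least $Z_\lp^*$.

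\emph{Step 2 (projection is feasible).} Take any feasible $(\xvec,\yvec,\zvec)$ for (\ref{eqn:path_fluid}). We need to verify the third and fourth constraints of the \ref{eqn:fluid}.
\begin{itemize}
\item \emph{Normalization.} Summing the third constraint of (\ref{eqn:path_fluid}) over $\ell$ gives $\sum_\ell y_{jt}^\ell = \sum_q z_j^q = 1$.
\item \emph{Stochastic dominance.} Each path is monotone, so $\pbar_{j,t-1}^q \ge \ell$ implies $\pbar_{jt}^q \ge \ell$. Hence
\[
\sum_{k=\ell}^n y_{j,t-1}^k = \sum_q \ind{\pbar_{j,t-1}^q \ge \ell}\, z_j^q \le \sum_q \ind{\pbar_{jt}^q \ge \ell}\, z_j^q = \sum_{k=\ell}^n y_{jt}^k ,
\]
where the inequality uses $z_j^q \ge 0$.
\end{itemize}
The remaining constraints and the objective are identical in the two problems. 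So $(\xvec,\yvec)$ is feasible for the \ref{eqn:fluid} with the same objective value, and the optimal value of (\ref{eqn:path_fluid}) is at most $Z_\lp^*$.

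\emph{Conclusion.} Combining the two steps, both problems have optimal value $Z_\lp^*$. For an optimal $(\xvecbar,\yvecbar,\zvecbar)$ of (\ref{eqn:path_fluid}), Step 2 shows that $(\xvecbar,\yvecbar)$ is feasible for the \ref{eqn:fluid} and attains $Z_\lp^*$, so it is optimal.

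The only step needing real care is Step 1. There we must check that the indexing of $\Mcal$ is consistent with the interval decomposition, in particular when different intervals produce the same realized path. Everything else is direct substitution.
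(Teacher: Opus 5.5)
Your proposal is correct and essentially matches the paper's proof. The paper also projects an optimal solution of (\ref{eqn:path_fluid}) onto the \ref{eqn:fluid}, using monotonicity of the paths for the dominance constraint and summation over $\ell$ for normalization. In the other direction it lifts an optimal solution of the \ref{eqn:fluid} via $\zbar_j^q = \mathbb P\{\Prmvecbar_j = \pvecbar_j^q\}$, which is exactly your interval-length assignment with duplicate paths merged.
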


\vspace{-1mm}

We give the proof in Appendix \ref{sec:equivalence}. There are two parts in the proof. In the first part, considering an optimal solution $(\xvecbar,\yvecbar,\zvecbar)$ to problem (\ref{eqn:path_fluid}), we show that the solution $(\xvecbar,\yvecbar)$ is feasible to the \ref{eqn:fluid}. In this part, to be able to show that the solution $(\xvecbar,\yvecbar)$ satisfies the third constraint in the \ref{eqn:fluid}, we critically use the fact that the price paths $\{ \pvecbar_j^q : q \in \Mcal\}$ are monotone so that we have $\ind{\pbar_{j,t-1}^q \geq \ell} \leq \ind{\pbar_{jt}^q \geq \ell}$ for all $\ell \in \Ncal$. In the second part, recalling $\Prmvecbar_j$ is the random price vector that we constructed at the beginning of this section, considering an optimal solution $(\xvecbar,\yvecbar)$ to the \ref{eqn:fluid}, we show that the solution $(\xvecbar,\yvecbar,\zvecbar)$ with $\zbar_j^q = \mathbb P \{ \Prmvecbar_j = \pvecbar_j^q\}$ is feasible to problem (\ref{eqn:path_fluid}). In this part, to be able to show that the solution $(\xvecbar,\yvecbar,\zvecbar)$ satisfies the third constraint in problem (\ref{eqn:path_fluid}), we critically use the fact that our construction of the random price path $\Prmvecbar_j = (\Prmbar_{jt} : t \in \Tcal)$ ensures that we have $\mathbb P \{ \Prmbar_{jt} = \ell \} = \ybar_{jt}^\ell$.

By Proposition \ref{pro:equivalence}, we can obtain an optimal solution to the \ref{eqn:fluid} by using problem (\ref{eqn:path_fluid}). Furthermore, the optimal objective values of the two problems match. In problem (\ref{eqn:path_fluid}), the decision variables $(z_j^q : j \in \Jcal,~q \in \Mcal)$ capture the probability of following each price path for each product. One of the useful properties of problem (\ref{eqn:path_fluid}) is that an extreme point solution to this problem puts strictly positive probabilities on at most two price paths for each product. Therefore, noting that the optimal objective value of problem (\ref{eqn:path_fluid}) is equal to that of the \ref{eqn:fluid}, intuitively speaking, we can recover an optimal solution to the \ref{eqn:fluid} by using at most two price paths for each product. In the next proposition, we give this result.

\vspace{-1mm}

\begin{prop}[Extreme Points]
\label{pro:extreme}
Letting $(\xvecbar,\yvecbar,\zvecbar)$ be an extreme point solution to problem $(\ref{eqn:path_fluid})$, there are two price paths $\Fs,\Ss \in \Mcal$ for each product $j$ such that $\zbar_j^q = 0$ for all $q \in \Mcal \setminus \{\Fs,\Ss\}$.  
\end{prop}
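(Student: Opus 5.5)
The plan is a perturbation argument. Take an extreme point $(\xvecbar,\yvecbar,\zvecbar)$ of problem (\ref{eqn:path_fluid}) and suppose, for contradiction, that some product $j$ has three price paths $q_1,q_2,q_3 \in \Mcal$ with $\zbar_j^{q_1},\zbar_j^{q_2},\zbar_j^{q_3} > 0$. We will build a nonzero direction $(\Delta\xvec,\Delta\yvec,\Delta\zvec)$, supported only on the variables of product $j$, such that both $(\xvecbar,\yvecbar,\zvecbar) \pm \epsilon(\Delta\xvec,\Delta\yvec,\Delta\zvec)$ are feasible for small $\epsilon>0$. This writes the point as the midpoint of two distinct feasible solutions, which is a contradiction.

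\textbf{Step 1 (freeze the ratios $x/y$).} For each $t \in \Tcal$ and $\ell \in \Ncal$, define $\alpha_t^\ell = \xbar_{jt}^\ell/\ybar_{jt}^\ell$ when $\ybar_{jt}^\ell>0$, and $\alpha_t^\ell = 0$ otherwise. By the second constraint, $\alpha_t^\ell \in [0,1]$. Moreover, $\xbar_{jt}^\ell = \alpha_t^\ell \ts \ybar_{jt}^\ell$ holds in both cases, since $\ybar_{jt}^\ell=0$ forces $\xbar_{jt}^\ell=0$.

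\textbf{Step 2 (the direction).} Choose $d=(d_1,d_2,d_3)\neq 0$ and put $\Delta z_j^{q_k}=d_k$, with $\Delta z$ equal to zero elsewhere. Set $\Delta y_{jt}^\ell = \sum_{k} \ind{\pbar_{jt}^{q_k}=\ell}\, d_k$ and $\Delta x_{jt}^\ell = \alpha_t^\ell \,\Delta y_{jt}^\ell$. With this choice:
\begin{itemize}
\item The third constraint of (\ref{eqn:path_fluid}) holds automatically, because $\Delta\yvec$ is defined from $\Delta\zvec$ through the same linear map.
\item The fourth constraint requires $d_1+d_2+d_3=0$.
\item The second constraint is preserved, since the perturbed $x$ equals $\alpha_t^\ell$ times the perturbed $y$ and $\alpha_t^\ell\le 1$.
\end{itemize}

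\textbf{Step 3 (capacity constraints; the key step).} There are $|\Lcal|$ capacity constraints, coupled across products, and it might seem that one needs many degrees of freedom to preserve them. The point is that the change in consumption of resource $i$ factors as
\[
\sum_t\sum_\ell a_{ij}\lambda_{jt}^\ell \Delta x_{jt}^\ell = a_{ij}\sum_{k=1}^3 d_k\,\kappa_k, \qquad \kappa_k=\sum_{t\in\Tcal}\lambda_{jt}^{\pbar_{jt}^{q_k}}\alpha_t^{\pbar_{jt}^{q_k}} .
\]
So every capacity constraint is left unchanged if we impose the single scalar condition $\sum_k d_k\kappa_k=0$. Together with $\sum_k d_k = 0$, this gives two homogeneous linear equations in three unknowns, so a nonzero $d$ exists. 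The objective changes linearly, but that does not matter for the extreme point argument.

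\textbf{Step 4 (bounds on the variables).} It remains to check that the perturbed variables stay in $[0,1]$ for small $\epsilon$.
\begin{itemize}
\item The $z$ variables are fine, since $\zbar_j^{q_k}>0$ and, by the fourth constraint, each $\zbar_j^{q_k}<1$.
\item For the $y$ variables, $\Delta y_{jt}^\ell\neq0$ only when some $q_k$ has $\pbar_{jt}^{q_k}=\ell$. In that case $\ybar_{jt}^\ell\ge \zbar_j^{q_k}>0$. Also, $y$ stays a sub-sum of a probability vector, so $y \le 1$.
\item The $x$ variables follow from $x=\alpha y$ with $\alpha \in [0,1]$.
\end{itemize}
Hence both perturbations are feasible and distinct because $d\neq 0$, contradicting extremality. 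I expect Step 3 to be the only delicate point: recognizing that fixing the ratios $x/y$ collapses all $|\Lcal|$ coupling constraints into one scalar condition, since each product consumes resources through the fixed vector $\avec_j$.
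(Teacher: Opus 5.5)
Your proof is correct. It reaches the conclusion by a more direct route than the paper's.

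\textbf{The paper's route.} The paper introduces auxiliary variables $w_j$ for each product's total expected sales and rewrites the feasible region as $\Xcal$. It then asserts, with details skipped, that an extreme point of $\Xcal$ restricts to an extreme point of the product-decomposed polytope $\Ycal(\wvecbar)$. The core is a counting lemma for the generic knapsack-like polytope $\Pcal$ in (\ref{eqn:poly}). After adding slacks there are $n+2$ equality constraints, so a basic solution has at most $n+2$ positive entries. Each constraint $x_i+u_i=\sum_j a_{ij}z_j$ with a positive right-hand side forces a positive entry among $(x_i,u_i)$, which leaves room for at most two positive $z$'s. A separate restriction argument removes the hypothesis $\sum_j a_{ij}\zbar_j>0$.

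\textbf{Your route.} You build an explicit two-sided perturbation in the original variables. Freezing the ratios $\xbar_{jt}^\ell/\ybar_{jt}^\ell$ plays the role of the paper's count over the pairs $(x_i,u_i)$, since both $x$ and the slack $y-x$ scale with $y$. Your factorization of the capacity change as $a_{ij}\sum_k d_k\kappa_k$ is exactly why the paper introduces $w_j$: product $j$ interacts with the rest of the problem only through one scalar. Together with $\sum_k d_k=0$, that scalar condition is the same two-equations-versus-three-weights count.

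\textbf{What each buys.} Your argument is self-contained:
\begin{itemize}
\item it needs no auxiliary variables and no skipped lifting step between $\Xcal$ and $\Ycal(\wvecbar)$;
\item the degenerate case is handled automatically, because $\Delta y_{jt}^\ell\neq 0$ only where $\ybar_{jt}^\ell\ge\zbar_j^{q_k}>0$;
\item it carries over verbatim to (\ref{eqn:path_fluid_gen});
\item it is constructive: move along $\pm d$, choosing the sign that does not decrease the objective, until some $z_j^{q_k}$ hits zero. This reduces any optimal solution to one with support at most two per product, without solving for a vertex.
\end{itemize}
The paper's route instead isolates a reusable statement about the polytope $\Pcal$ and rests on the standard fact that a basic solution has at most as many positive entries as equality constraints.
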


\vspace{-1mm}

The proof is in Appendix \ref{sec:extreme}. In the proof, we use the additional decision variables \mbox{$(w_j : j \in \Jcal)$} to express the first constraint in (\ref{eqn:path_fluid})  as $\sum_{j \in \Jcal} a_{ij} \ts w_j \leq c_i$ for all $i \in \Lcal$ and $\sum_{t \in \Tcal} \sum_{\ell \in \Ncal} \lambda_{jt}^\ell \ts x_{jt}^\ell = w_j$ for all $j \in \Jcal$. In this case, if we fix the values of the decision variables $(w_j : j \in \Jcal)$, then the set of feasible solutions for problem (\ref{eqn:path_fluid}) decomposes by the products. For fixed value of $w_j$, we relate the set of feasible solutions corresponding to product $j$ to the set of feasible solutions to a variant of the knapsack problem. Using non-trivial properties of the set of feasible solutions to the knapsack problem, we show that at most two of the decision variables $(z_j^q : q \in \Mcal)$ take strictly positive values in an extreme point solution. In the notation in Proposition \ref{pro:extreme}, the superscripts $\Fs$ and $\Ss$ simply stand for the \underline{\it first\phantom{p}\!\!\!} and \underline{\it second\phantom{p}\!\!\!} price paths. In our approximate policies, for each product~$j$, we carefully choose one of the price paths $\Fs$ or $\Ss$. At each time period $t$, for each product $j$, we either charge the price given by the chosen price path or do not make the product available. The choice between charging the price given by the chosen price path and not making the product available for purchase is a probabilistic choice according to carefully chosen probabilities. Because the price paths $\{\pvecbar_j^q : q \in \Mcal\}$ are monotone, the prices charged by our approximate policies are monotone over time. We give performance guarantees for approximate policies of this form.

\section{Approximate Policy}
\label{sec:policy}
\vspace{-2mm}

We construct an approximate policy by using an optimal solution to problem (\ref{eqn:path_fluid}). Throughout this section, we use $(\xvecbar,\yvecbar,\zvecbar)$ to denote an optimal extreme point solution to problem (\ref{eqn:path_fluid}). Noting that the optimal objective value of the \ref{eqn:fluid} is $Z_\lp^*$, by Proposition \ref{pro:equivalence}, the optimal objective value of problem (\ref{eqn:path_fluid}) is $Z_\lp^*$ as well. By Proposition \ref{pro:extreme}, for each product $j$, there are two price paths \mbox{$\Fs, \Ss \in \Mcal$}, such that $\zbar_j^\Fs \geq 0$, $\zbar_j^\Ss \geq 0$ and $\zbar_j^q = 0$ for all $q \in \Mcal \setminus \{\Fs,\Ss\}$. We define a few pieces of notation. We use $\Rbar_j$ to denote the total expected revenue provided by product $j$ in the optimal objective value of problem (\ref{eqn:path_fluid}). In particular, we have $\Rbar_j = \sum_{t \in \Tcal} \sum_{\ell \in \Ncal} r_j^\ell \ts \lambda_{jt}^\ell \ts \xbar_{jt}^\ell$. Similarly, we use $\Cbar_j$ to denote the total expected capacity consumption of product $j$ in the optimal solution to problem (\ref{eqn:path_fluid}). In other words, we have $\Cbar_j = \sum_{t \in \Tcal} \sum_{\ell \in \Ncal} \lambda_{jt}^\ell \ts \xbar_{jt}^\ell$. Noting the objective function in (\ref{eqn:path_fluid}), we have $Z_\lp^* = \sum_{j \in \Jcal} \Rbar_j$, whereas noting the first constraint in  (\ref{eqn:path_fluid}), we have $\sum_{j \in \Jcal} a_{ij} \ts \Cbar_j \leq c_i$ for all $i \in \Lcal$. We use $\Rbar_j$ and $\Cbar_j$ to characterize the total expected revenue and capacity consumption from product $j$ in the optimal solution to problem (\ref{eqn:path_fluid}). On the other hand, to characterize the total expected revenue and capacity consumption from product $j$ due to price path $\pvecbar_j^q$ in the optimal solution to problem (\ref{eqn:path_fluid}), we also define  
\begin{align}
\reve_j^q = \sum_{t \in \Tcal} \sum_{\ell \in \Ncal} \ind{\pbar_{jt}^q = \ell} \ts r_j^\ell \ts \lambda_{jt}^\ell \ts \frac{\xbar_{jt}^\ell}{\ybar_{jt}^\ell},
\qquad
\capa_j^q = \sum_{t \in \Tcal} \sum_{\ell \in \Ncal} \ind{\pbar_{jt}^q = \ell} \ts \lambda_{jt}^\ell \ts \frac{\xbar_{jt}^\ell}{\ybar_{jt}^\ell}.
\label{eqn:path_rev}
\end{align}

\vspace{-0.5mm}

We can recover the total expected revenue and capacity consumption from product~$j$ by using the corresponding quantities from the different price paths. In particular, we have the identities $\Rbar_j = \sum_{q \in \Mcal} \zbar_j^q \ts \reve_j^q$ and $\Cbar_j = \sum_{q \in \Mcal} \zbar_j^q \ts \capa_j^q$. To see that the first identity holds, by (\ref{eqn:path_rev}), we have $\sum_{q \in \Mcal} \zbar_j^q \ts \reve_j^q = \sum_{t \in \Tcal} \sum_{\ell \in \Ncal}  r_j^\ell \ts \lambda_{jt}^\ell \ts \frac{\xbar_{jt}^\ell}{\ybar_{jt}^\ell} \ts \sum_{q \in \Mcal} \ind{\pbar_{jt}^q = \ell} \ts \ts \zbar_j^q = \sum_{t \in \Tcal} \sum_{\ell \in \Ncal} r_j^\ell \ts \lambda_{jt}^\ell \ts \xbar_{jt}^\ell = \Rbar_j$, where the second equality holds because $(\xvecbar,\yvecbar,\zvecbar)$ satisfies the third constraint in problem (\ref{eqn:path_fluid}). We can follow the same argument to see that the second identity holds. Viewing $\reve_j^q$ as the total expected revenue from product $j$ due to price path $\pvecbar_j^q$, we interpret this quantity as follows. Following the price path $\pvecbar_j^q = (\pbar_{jt}^q :t \in \Tcal)$, we charge price level $\ell$ for product $j$ at time period $t$ if and only if $\pbar_{jt}^q  = \ell$. If we decide to charge price level $\ell$ for product $j$ at time period $t$, then we make the product available for purchase with probability $\xbar_{jt}^\ell / \ybar_{jt}^\ell$. If we make the product available, then we make a sale for the product with probability $\lambda_{jt}^\ell$, in which case, we obtain a revenue of $r_j^\ell$. We do not make the product available for purchase with probability $1 - \xbar_{jt}^\ell / \ybar_{jt}^\ell$. By the second constraint in (\ref{eqn:path_fluid}), we have $\xbar_{jt}^\ell / \ybar_{jt}^\ell \leq 1$, so we can indeed use the quantity $\xbar_{jt}^\ell / \ybar_{jt}^\ell \leq 1$ as a probability. We proceed to giving a specification of our approximate policy. 

{\bf \underline{Specification of the Approximate Policy}:}
\\
\indent
For each product $j$, recalling that $\zbar_j^q = 0$ for all $q \in \Mcal \setminus \{\Fs,\Ss\}$, we index the two price paths $\Fs, \Ss$ such that $\reve_j^\Fs / \max\{ \capa_j^\Fs , \Cbar_j\}  \geq \reve_j^\Ss / \max\{ \capa_j^\Ss , \Cbar_j\}$. In our approximate policy, we follow the prices in the price path $\pvecbar_j^\Fs$ for product $j$. In particular, we set $\thetabar_{jt} = \sum_{\ell \in \Ncal} \ind{\pbar_{jt}^\Fs = \ell} \frac{\xbar_{jt}^\ell}{\ybar_{jt}^\ell}$, which is simply the value of the ratios $\{ \xbar_{jt}^\ell/ \ybar_{jt}^\ell : \ell \in \Ncal\}$ corresponding to the price charged in price path $\pvecbar_j^\Fs$ for product~$j$ at time period $t$. Letting $\gamma \in (0,1)$ be a tuning parameter, our approximate policy makes its decisions as follows. At time period $t$, if we do not have enough remaining capacity for the resources to make product~$j$ available, then we do not make product $j$ available for purchase. Otherwise, we make product $j$ available with probability $\gamma \ts \thetabar_{jt} \ts \Cbar_j / \max\{ \capa_j^\Fs , \Cbar_j\}$. If we make product $j$ available, then we charge the price $\pbar_{jt}^\Fs$. The product availability decisions at different time periods are independent of each other. We will specify the choice of the tuning parameter, which will be arbitrarily close to one as the resource capacities get large.

\vspace{-0.5mm}

The choice of the price path to follow for product $j$ is driven by the ratio $\reve_j^\Fs / \max\{ \capa_j^\Fs , \Cbar_j\}$, which we intuitively interpret as the total expected revenue from product~$j$ on price path $\pvecbar_j^\Fs$ for each unit of total expected capacity consumption. We ideally would like to make product $j$ available for purchase at time period $t$ with probability $\thetabar_{jt}$, but if the total expected capacity consumption $\capa_j^\Fs$ of product~$j$ due to price path $\pvecbar_j^\Fs$ exceeds the total expected capacity consumption $\Cbar_j$ of product~$j$, then we dial down the availability probability. In the next theorem, letting $\apx$ be the total expected revenue of the approximate policy, as well as using $L = \max_{j \in \Jcal} \sum_{i \in \Lcal} a_{ij}$ for the maximum number of resources used by a product and $c_{\min} = \min_{i \in \Lcal} c_i$ for the minimum resource capacity, we give a performance guarantee for the approximate policy.

\vspace{-2mm}

\begin{thm}
[Performance Guarantee]
\label{thm:perf}
There exists a choice of the tuning parameter to ensure that the total expected revenue of the approximate policy satisfies
\begin{align*}
\frac{\apx}{\opt} ~\geq~ \frac{\apx}{Z_\lp^*} ~\geq~
\max \Bigg\{ \frac{1}{8L} \ts , \ts \frac{1}{2} - \sqrt{\frac{\log c_{\min}}{2 \ts c_{\min}}} - \frac{L}{c_{\min}} \Bigg\}.
\end{align*}
\end{thm}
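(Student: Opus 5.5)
The plan has three steps. First, compute the ``unconstrained'' expected revenue and consumption of the policy for each product. Second, show that the chosen path $\Fs$ yields at least half of $\Rbar_j$ per unit of scaled consumption. Third, bound the loss from resources running out using two standard arguments, one for each term in the maximum. The first inequality $\apx/\opt \geq \apx/Z_\lp^*$ is immediate from Proposition \ref{pro:ub}, so only the second needs work.

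\emph{Step 1 (per-product accounting).} Let $\alpha_j = \Cbar_j/\max\{\capa_j^\Fs,\Cbar_j\}\le 1$. If product $j$ were never blocked, its expected revenue would be $\gamma\alpha_j \reve_j^\Fs$ and its expected consumption would be $\gamma\alpha_j\capa_j^\Fs \le \gamma \Cbar_j$. Hence the expected load on resource $i$ is at most $\gamma\sum_j a_{ij}\Cbar_j \le \gamma c_i$.

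\emph{Step 2 (path selection gives half).} I will show that $\alpha_j\reve_j^\Fs \ge \Rbar_j/2$. We have $\Rbar_j = \zbar_j^\Fs\reve_j^\Fs + \zbar_j^\Ss\reve_j^\Ss$, and for $q\in\{\Fs,\Ss\}$,
\[
\reve_j^q \le \rho\,\max\{\capa_j^q,\Cbar_j\}, \qquad \rho=\reve_j^\Fs/\max\{\capa_j^\Fs,\Cbar_j\}.
\]
This gives $\Rbar_j \le \rho\sum_q \zbar_j^q\max\{\capa_j^q,\Cbar_j\} \le \rho\sum_q\zbar_j^q(\capa_j^q+\Cbar_j) = 2\rho\Cbar_j$, using $\sum_q \zbar_j^q\capa_j^q=\Cbar_j$ and $\zbar_j^\Fs+\zbar_j^\Ss=1$. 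Since $\alpha_j\reve_j^\Fs = \rho\Cbar_j$, the claim follows. The factor $1/2$ here is exactly where the tight bound comes from.

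\emph{Step 3 (capacity loss).} Each availability decision at time $t$ is independent of the past, conditioned on the product being offerable. So the revenue at time $t$ equals the unconstrained revenue times the probability that every resource used by $j$ still has at least one unit left, which is at least $1-\sum_i a_{ij}\mathbb P\{\text{resource } i \text{ depleted before } t\}$.

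For the asymptotic term, dominate the consumption of resource $i$ by the total demand in a system with no capacity limits. This is a sum of independent Bernoulli variables with mean at most $\gamma c_i$. Hoeffding's inequality then gives $\mathbb P\{\text{consumption}\ge c_i - 1\}\le \exp(-2(c_i(1-\gamma)-1)^2/c_i)$, treating the sum over at most $c_i$ effective trials; making this rigorous is the main obstacle. I would first try a multiplicative Chernoff bound, or a Hoeffding bound on the number of sales among the arrivals. Choosing $1-\gamma = \sqrt{\log c_{\min}/(2c_{\min})}+1/c_{\min}$ makes the failure probability at most $1/c_{\min}$ per resource. A union bound over the at most $L$ resources of product $j$ costs $L/c_{\min}$. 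This gives $\apx \ge \gamma(1-L/c_{\min})\sum_j\Rbar_j/2$, which rearranges to at least $\tfrac12-\sqrt{\log c_{\min}/(2c_{\min})}-L/c_{\min}$.

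For the $1/(8L)$ term, use Markov's inequality instead. Take $\gamma=1/(2L)$. The expected consumption of resource $i$ before $t$ is at most $\gamma c_i$, so $\mathbb P\{\text{depleted}\}\le\gamma$. The survival probability is then at least $1-L\gamma=1/2$, giving $\apx\ge \gamma\cdot\tfrac12\cdot Z_\lp^*/2 = Z_\lp^*/(8L)$. Choose whichever $\gamma$ gives the larger bound; this establishes the maximum.
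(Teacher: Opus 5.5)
Your Steps 1 and 2, the domination of consumption by the capacity-agnostic demand, and the Markov branch giving $\frac{1}{8L}$ with $\gamma=\frac{1}{2L}$ are correct and follow the paper. Your use of $\max\{\capa_j^q,\Cbar_j\}\le \capa_j^q+\Cbar_j$ is a harmless variant of the paper's bound $\zbar_j^q\le \Cbar_j/\max\{\Cbar_j,\capa_j^q\}$.

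The gap is the concentration step for the asymptotic term. It is not only a matter of rigor: the tail bound you write down is false when $\gamma$ is close to one, which is the regime you need. Hoeffding's exponent for a sum of $T$ independent $\{0,1\}$ variables is $2s^2/T$, and $T$ is unrelated to $c_i$. In the paper's own tight instance, $T=C^2$ while $c_1=C$. So there is no sense in which you have ``at most $c_i$ effective trials.''

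Concretely, spread the demand probabilities for resource $i$ as $\gamma c_i/T$ over $T\to\infty$ periods. The load converges to a Poisson variable with mean $\gamma c_i$. Its tail at $c_i$ behaves like $\exp(-c_i(\gamma-1-\log\gamma)(1+o(1)))$, where $\gamma-1-\log\gamma=\frac{1}{2}(1-\gamma)^2+O((1-\gamma)^3)$. That is roughly a quarter of your exponent $2(1-\gamma)^2c_i$. With your choice $1-\gamma\approx\sqrt{\log c_{\min}/(2c_{\min})}$, the per-resource failure probability is therefore of order $c_{\min}^{-1/4}$ (up to a logarithmic factor), not $1/c_{\min}$. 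The stated guarantee then does not follow.

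Your fallback, the multiplicative Chernoff bound, gives $\exp(-(1-\gamma)^2c_i/(1+\gamma))$. This has the same $\approx\frac{1}{2}(1-\gamma)^2c_i$ rate, so it does not rescue this $\gamma$ either.

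The missing idea is that $c_i$ controls the \emph{variance} of the load, not the number of trials. Write $\Nrm_{it}$ for the capacity-agnostic demand on resource $i$ in period $t$, Bernoulli with parameter $\alpha_{it}$. Then $\mathrm{Var}(\sum_{t\in\Tcal}\Nrm_{it})\le\sum_{t\in\Tcal}\alpha_{it}\le\gamma c_i$, so a variance-sensitive inequality is what you need. The paper uses Bernstein's inequality, which gives $\mathbb P\{\sum_{t\in\Tcal}\Nrm_{it}\ge c_i\}\le\exp\big(-\frac{(1-\gamma)^2c_i^2/2}{\gamma c_i+(1-\gamma)c_i/3}\big)\le\exp(-(1-\gamma)^2c_{\min}/2)$. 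It then re-tunes to $\gamma=1-\sqrt{2\log c_{\min}/c_{\min}}$, so that this bound equals $1/c_{\min}$.

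With that choice, your final algebra goes through: $\frac{1}{2}\gamma(1-L/c_{\min})\ge\frac{1}{2}-\sqrt{\log c_{\min}/(2c_{\min})}-L/c_{\min}$. Note that the theorem's term $\sqrt{\log c_{\min}/(2c_{\min})}$ is exactly $\frac{1}{2}\sqrt{2\log c_{\min}/c_{\min}}$. It is calibrated to the Bernstein/Chernoff rate, not to a Hoeffding rate. Your Chernoff fallback would also work with this re-tuned $\gamma$, since then $(1-\gamma)^2c_{\min}/(1+\gamma)\ge\log c_{\min}$.
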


\vspace{-1.7mm}

We give the proof in Appendix \ref{sec:perf}. In practice, the number of resources may be large, but the number of resources used by a product is usually uniformly bounded. In airline revenue management applications, for example, the resources correspond to flight legs and the products correspond to itineraries. While the number of flight legs in an airline network can reach hundreds, the number of flight legs in an itinerary rarely exceeds two, so $L=2$. When $L$ is uniformly bounded, the first expression in the maximum operator in the theorem provides a constant-factor performance guarantee. As the resource capacities get large, the second expression in the maximum operator in the theorem becomes arbitrarily close to $\frac 12$, so our approximate policy provides a performance guarantee arbitrarily close to $\frac 12$ under large resource capacities.

\vspace{-2mm}

%In the proof of Theorem \ref{thm:perf}, we establish the second inequality, in which case, the first inequality follows by Proposition \ref{pro:ub}. 
{\bf \underline{Tightness of the Performance Guarantee}:}
\\
\indent The performance guarantee of $\frac 12$ in Theorem \ref{thm:perf} is tight in the sense that we can give a problem instance such that the optimal objective value of the \ref{eqn:fluid} exceeds the optimal total expected revenue by a factor arbitrarily close to two as the resource capacities get large. In particular, we consider a problem instance with a single resource and a single product, each indexed by $\Lcal = \{1\}$ and $\Jcal = \{1\}$. The capacity of the resource is $c_1 = C$. There are two possible price levels for the product with the associated revenues $r_1^1 = \frac 1C $ and $r_1^2= 1$. There are $C^2$ time periods in the selling horizon. If we charge the first price level, then we make a sale for the product at all time periods with probability one, so $\lambda_{1t}^1 = 1$ for all $t=1,\ldots,C^2$. If we charge the second price level, then we make a sale for the product with probability one at the first time period and with probability zero at other time periods, which is to say that we have $\lambda_{11}^2 = 1$ and $\lambda_{1t}^2 = 0$ for all $t=2,\ldots,C^2$. We proceed to computing the optimal total expected revenue.

Because we are constrained to follow a monotone price path, if we charge the second price level at the first time period, then we cannot switch to the first price level at a later time period. Thus, noting that $r_1^2=1$ and $\lambda_{11}^2=1$, if we charge the second price level at the first time period, then the optimal total expected revenue is one. On the other hand, because $\lambda_{1t}^2 = 0$ for $t \neq 1$, if we charge the second price level at any time period other than the first time period, then we do not make a sale. Therefore, if we charge the first price level at the first time period, then there is no reason to switch to the second price level at a later time period. In this case, noting that $c_1 = C$, $r_1^1 = \frac 1C$ and $\lambda_{1t}^1 = 1$ for all $t = 1,\ldots,C^2$, if we charge the first price level at the first time period, then the optimal total expected revenue is $C \times \frac 1C = 1$. Thus, it follows that we have $\opt = 1$.

We construct the solution $(\xvechat,\yvechat)$ to the \ref{eqn:fluid} as $\xhat_{1t}^1 = \yhat_{1t}^1 = \frac{1}{1+C}$ and \mbox{$\xhat_{1t}^2 = \yhat_{1t}^2 = \frac{C}{1+C}$} for all $t=1,\ldots,C^2$. We verify that this solution is feasible to the \ref{eqn:fluid}. Noting the value of $\lambda_{1t}^\ell$ for $\ell =1,2$ and $t=1,\ldots,C^2$, we have $\sum_{t=1}^{C^2} \sum_{\ell=1}^2 \lambda_{1t}^\ell \ts \xhat_{1t}^\ell= \frac{1}{1+C} + \frac{C}{1+C} + (C^2-1) \ts \frac{1}{1+C} = C = c_1$, so the first constraint in the \ref{eqn:fluid} holds. The second and third constraints in the \ref{eqn:fluid} immediately hold because $\xhat_{1t}^\ell = \yhat_{1t}^\ell$ and $\yhat_{1,t-1}^\ell = \yhat_{1t}^\ell$. Finally, the fourth constraint in the \ref{eqn:fluid} follows as $\yhat_{1t}^1 + \yhat_{1t}^2 = \frac{1}{1+C} + \frac{C}{1+C} = 1$. The objective value of the \ref{eqn:fluid} at the solution  $(\xvechat,\yvechat)$ is $\sum_{t=1}^{C^2} \sum_{\ell =1}^2 r_1^\ell \ts \lambda_{1t}^\ell \ts \xhat_{1t}^\ell = \frac{1}{C} \ts \frac{1}{1+C} + \frac{C}{1+C} + (C^2-1) \ts \frac{1}{C} \ts \frac{1}{1+C} = \frac{2C}{1+C}$, so we get $Z_\lp^* \geq \frac{2C}{1+C}$. Therefore, we have $\frac{Z_\lp^*}{\opt} \geq \frac{2C}{C+1}$, so the right side of the inequality is arbitrarily close to two as the capacity of the resource gets large, as desired. Using the last inequality along with the performance guarantee in Theorem \ref{thm:perf}, we have $\frac{1}{2} - \sqrt{\frac{\log C}{2 \ts C}} - \frac{1}{C} \leq \frac{\apx}{Z_\lp^*} \leq \frac{\opt}{Z_\lp^*} \leq \frac{C+1}{2C}$, which implies that both the approximate and optimal policies obtain half of the optimal objective value of the \ref{eqn:fluid} as the resource capacities get large. In other words, even if we consider the optimal policy, we only obtain at most half of the optimal objective value of the \ref{eqn:fluid}, so the approximate policy is the best we can hope for in this sense. In the next section, nevertheless, we slightly modify our approximate policy to ensure that the approximate policy obtains the full optimal objective value of the \ref{eqn:fluid} as the resource capacities get large under an additional assumption on the optimal solution to the \ref{eqn:fluid}.

\section{Approximate Policy with Ex-Post Performance Guarantee}
\label{sec:expost_policy}

The performance guarantee for our approximate policy in Theorem \ref{thm:perf} depends only on the problem primitives $L$ and $c_{\min}$ and it gets arbitrarily close to $\frac 12$ as the capacities of the resources get large, even though the practical performance of our approximate policy is substantially better than what is indicated by this performance guarantee. In this section, we give an approximate policy with a performance guarantee that depends on the optimal solution to the \ref{eqn:fluid}, so we can calculate the performance guarantee of the approximate policy after we solve the \mbox{\ref{eqn:fluid}}. We refer to this approximate policy as the ex-post approximate policy, because we can calculate its  performance guarantee after solving the \ref{eqn:fluid}. The performance guarantee of the \mbox{ex-post approximate policy} gets arbitrarily close to one as the capacities of the resources get large, as long as the optimal solution to the \ref{eqn:fluid} satisfies certain properties that we will make precise.
We give a specification of the ex-post approximate policy. Letting $(\xvecbar,\yvecbar,\zvecbar)$ be an extreme point optimal solution to problem (\ref{eqn:path_fluid}), for each product $j$, recall that there are two price paths $\Fs,\Ss \in \Mcal$ such that \mbox{$\zbar_j^\Fs \geq 0$}, $\zbar_j^\Ss \geq 0$ and $\zbar_j^q = 0$ for all $q \in \Mcal \setminus \{ \Fs, \Ss\}$. We continue using $\Rbar_j = \sum_{t \in \Tcal} \sum_{\ell \in \Ncal} r_j^\ell \ts \lambda_{jt}^\ell \ts \xbar_{jt}^\ell$ and \mbox{$\Cbar_j = \sum_{t \in \Tcal} \sum_{\ell \in \Ncal} \lambda_{jt}^\ell \ts \xbar_{jt}^\ell$}, as well as $\reve_j^q$ and $\capa_j^q$ as defined in (\ref{eqn:path_rev}). By the discussion just after (\ref{eqn:path_rev}), we have $\Rbar_j = \sum_{q \in \Mcal} \zbar_j^q \ts \reve_j^q$ and $\Cbar_j = \sum_{q \in \Mcal} \zbar_j^q \ts \capa_j^q$.

{\bf \underline{Specification of the Ex-Post Approximate Policy}:}
\\
\indent
We index the two price paths $\Fs, \Ss$ such that $\reve_j^\Fs  \geq \reve_j^\Ss$. In our approximate policy, we follow the prices in the price path $\pvecbar_j^\Fs$ for product $j$. We set $\thetabarp_{jt} = \sum_{\ell \in \Ncal} \ind{\pbar_{jt}^\Fs = \ell} \frac{\xbar_{jt}^\ell}{\ybar_{jt}^\ell}$ to capture the value of the ratios $\{ \xbar_{jt}^\ell/ \ybar_{jt}^\ell : \ell \in \Ncal\}$ corresponding to the price charged in price path $\pvecbar_j^\Fs$ for product~$j$ at time period $t$. Using $\gamma \in (0,1)$ to denote a tuning parameter, our ex-post approximate policy makes its decisions as follows. At time period $t$, if we do not have enough remaining resource capacities to make product $j$ available, then we do not make product $j$ available for purchase. Otherwise, using $[a]^+ = \max\{a,0\}$, letting $\Deltabar = \max_{i \in \Lcal} \sum_{j \in \Jcal} a_{ij} [\capa_j^\Fs - \capa_j^\Ss]^+$, we make product $j$ available with probability $\gamma \ts \thetabarp_{jt} \ts c_{\min} / (c_{\min} + \Deltabar)$. If we make product $j$ available, then we charge the price $\pbar_{jt}^\Fs$. The ex-post approximate policy  may choose a different price path to follow than the original approximate policy. Also, the two approximate policies may make a product available with different probabilities. For $a, b \in \Mcal$ with $\zbar_j^a > 0$ and $\zbar_j^b> 0$, because $\Cbar_j = \zbar_j^a  \ts \capa_j^a + \zbar_j^b \ts \capa_j^b$, if $\capa_j^a \geq \capa_j^b$, then we have  $\capa_j^a \geq \Cbar_j \geq \capa_j^b$. Similarly, if $\reve_j^a \geq \reve_j^b$, then we have  $\reve_j^a \geq \Rbar_j \geq \reve_j^b$. It is possible to have $\capa_j^a \geq \Cbar_j \geq \capa_j^b$, $\reve_j^a \geq \Rbar_j \geq \reve_j^b$ and $\reve_j^a / \capa_j^a \leq \reve_j^b / \Cbar_j$, in which case, the ex-post and original approximate policies would follow different price paths. Despite our best efforts, we do not have a unified policy that achieves the performance guarantees of both approximate policies.

%If, however, $\capa_j^\Fs \leq \capa_j^\Ss$ for all $j \in \Jcal$, then the ex-post policy makes product $j$ available at time period $t$ with probability $\gamma \ts \thetabar_{jt}$. Noting that $\Cbar_j = \zbar_j^\Fs \ts \capa_j^\Fs + \zbar_j^\Ss \ts \capa_j^\Ss$, if $\capa_j^\Fs \leq \capa_j^\Ss$ for all $j \in \Jcal$, then we have $\capa_j^\Fs \leq \Cbar_j$, in which case, we have $\max\{ \Cbar_j , \capa_j^\Fs \} = \Cbar_j$, so the original approximate policy also makes product $j$ available at time period $t$ with probability $\gamma \ts \thetabar_{jt}$. Thus, if $\capa_j^\Fs \leq \capa_j^\Ss$ for all $j \in \Jcal$, then the products are available with the same probabilities.

In the next theorem, using $\apxp$ to denote the total expected revenue of the ex-post approximate policy, we give a performance guarantee. Comparing the performance guarantee in  Theorem \ref{thm:expost} with the one in Theorem \ref{thm:perf}, the performance guarantee below depends on $\Deltabar$, which, in turn, depends on the optimal solution to the \ref{eqn:fluid}. The proof is in Appendix \ref{sec:expost}.

\vspace{-1.5mm}

\begin{thm}
[Ex-Post Approximate Policy]
\label{thm:expost}
There exists a choice of the tuning parameter to ensure that the total expected revenue of the ex-post approximate policy satisfies
\begin{align*}
\frac{\apxp}{\opt} ~\geq~ \frac{\apxp}{Z_\lp^*} ~\geq~
1 - \sqrt{\frac{2\log c_{\min}}{ c_{\min}}} - \frac{L + \Deltabar}{c_{\min}}.
\end{align*}
\end{thm}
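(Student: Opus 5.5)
The plan is to compare the ex-post approximate policy with an idealized policy that ignores capacity, and then control the loss caused by stock-outs with a concentration bound. The first inequality follows from Proposition \ref{pro:ub}, so only the second needs work.

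\textbf{Step 1: ideal revenue and capacity use.} Fix product $j$ and suppose it is offered at every period $t$ with probability $\gamma\,\thetabarp_{jt}\,c_{\min}/(c_{\min}+\Deltabar)$, ignoring capacity. Its expected revenue is then $\gamma\,\frac{c_{\min}}{c_{\min}+\Deltabar}\,\reve_j^\Fs$. Since $\reve_j^\Fs\ge\Rbar_j$, summing over products gives at least $\gamma\,\frac{c_{\min}}{c_{\min}+\Deltabar}\,Z_\lp^*$. Its expected capacity consumption is $\gamma\,\frac{c_{\min}}{c_{\min}+\Deltabar}\,\capa_j^\Fs$. We have $\capa_j^\Fs\le \Cbar_j+[\capa_j^\Fs-\capa_j^\Ss]^+$, because $\Cbar_j$ is a convex combination of $\capa_j^\Fs$ and $\capa_j^\Ss$: if $\capa_j^\Fs\le\capa_j^\Ss$ then $\capa_j^\Fs\le\Cbar_j$, and otherwise $\capa_j^\Fs-\Cbar_j\le \capa_j^\Fs-\capa_j^\Ss$. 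Hence the expected total consumption of resource $i$ is at most
\[
\gamma\,\frac{c_{\min}}{c_{\min}+\Deltabar}\,(c_i+\Deltabar)\le \gamma\, c_i ,
\]
where the last step uses $c_i\ge c_{\min}$, so that $c_{\min}(c_i+\Deltabar)\le c_i(c_{\min}+\Deltabar)$.

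\textbf{Step 2: lower bound on actual revenue.} Couple the real policy with the ideal one by using the same independent coins for availability and purchase. Product $j$ is offered at period $t$ in the real policy exactly when the coin says offer and every resource $i$ with $a_{ij}=1$ still has at least one unit. Given its price, this event is independent of the current-period coins, so the real policy earns
\[
\sum_{j,t}\mathbb E[\text{ideal revenue of }(j,t)]\cdot\mathbb P\{\text{enough capacity at }t\}.
\]
By a union bound, the capacity probability is at least $1-\sum_{i:a_{ij}=1}\mathbb P\{D_{i,t-1}\ge c_i\}$, where $D_{i,t-1}$ is the ideal demand for resource $i$ before period $t$. This ideal demand is a sum of independent Bernoullis (at most one sale per product per period) with mean at most $\gamma c_i$. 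A Chernoff--Hoeffding bound with $\gamma=1-\sqrt{2\log c_{\min}/c_{\min}}$ gives $\mathbb P\{D_{i,T}\ge c_i\}\le 1/c_{\min}$. Hoeffding cannot be applied directly because the number of terms may exceed $c_i$, so I would use the multiplicative Chernoff form $\exp(-\delta^2\mu/2)$ with $\mu\le\gamma c_i$ and check that the constants match. Since product $j$ uses at most $L$ resources, the stock-out probability is at most $L/c_{\min}$.

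\textbf{Step 3: combine.} Putting the pieces together,
\[
\frac{\apxp}{Z_\lp^*}\ge \gamma\,\frac{c_{\min}}{c_{\min}+\Deltabar}\Bigl(1-\frac{L}{c_{\min}}\Bigr).
\]
Using $\frac{c_{\min}}{c_{\min}+\Deltabar}\ge 1-\frac{\Deltabar}{c_{\min}}$ and $(1-a)(1-b)(1-c)\ge 1-a-b-c$ yields $1-\sqrt{2\log c_{\min}/c_{\min}}-(L+\Deltabar)/c_{\min}$.

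\textbf{Main obstacle.} The delicate step is the tail bound. The mean is $\gamma c_i$ rather than exactly $\gamma c_{\min}$, and the bound must hold uniformly over $t$; monotonicity in $t$ handles the latter by bounding with $D_{i,T}$. For the former, the Chernoff exponent $(1-\gamma)^2c_i/2$ grows with $c_i$, so bounding it using $c_{\min}$ is safe. The constant $\sqrt{2\log c_{\min}/c_{\min}}$ should come out of exactly this bound; if the constants do not match, I would instead use the Hoeffding-type bound on a padded sum with $c_i$ terms via a domination argument.
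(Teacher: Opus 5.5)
Your route is the paper's, step for step: an inventory-agnostic policy coupled to the real one, the bound $\capa_j^\Fs\le\Cbar_j+[\capa_j^\Fs-\capa_j^\Ss]^+$ giving mean demand at most $\gamma c_i$ on each resource, $\reve_j^\Fs\ge\Rbar_j$ for revenue, a union bound over at most $L$ resources, and the same closing algebra. The one step that fails as written is the tail bound you flagged. The form $\exp(-\delta^2\mu/2)$ is the \emph{lower}-tail multiplicative Chernoff bound. It does not hold for the upper tail in general: for Poisson-like sums the upper-tail rate $(1+\delta)\log(1+\delta)-\delta$ is strictly below $\delta^2/2$. Your fallback also fails. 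No domination by a sum with only $c_i$ terms is available: $\mathbb P\{\mathrm{Bin}(c_i,p)\ge c_i\}=p^{c_i}$, whereas the same mean spread over many small-probability trials has a far heavier tail.

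The constants do match once the correct inequality is used. The upper-tail form is $\mathbb P\{X\ge(1+\delta)\mu\}\le\exp(-\delta^2\mu/(2+\delta))$ for any $\mu\ge\mathbb E X$. Take $\mu=\gamma c_i$ and $(1+\delta)\mu=c_i$. The exponent is then $(1-\gamma)^2c_i/(1+\gamma)\ge(1-\gamma)^2c_{\min}/2=\log c_{\min}$, so the bound is $1/c_{\min}$ at your $\gamma$. The paper instead applies Bernstein's inequality with $\text{\sf Var}(\sum_t\Nrm_{it})\le\gamma c_i$. Its exponent $\frac12(1-\gamma)^2c_i^2/(\gamma c_i+\frac13(1-\gamma)c_i)$ is likewise at least $(1-\gamma)^2c_i/2$.

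A smaller point: independence holds across periods, not across products within a period. There is a single customer per period, so same-period sale indicators for different products are mutually exclusive. The summands to use are the per-period Bernoulli demands $\Nrm_{it}$ with parameters $\alpha_{it}$, as in the paper.
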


\vspace{-1.5mm}

Thus, as the capacities of the resources get large, if the difference between the total expected capacity consumptions of the two price paths for each product remains bounded, then the ex-post approximate policy is asymptotically optimal. While the proofs of Theorems~\ref{thm:perf}  and \ref{thm:expost} have similar outlines, there is a critical difference in the result that we achieve in the two theorems. Using $\availp(\gamma)$ to denote a lower bound on the probability that all of the resources used by a product are available at a time period under the ex-post approximate policy, we can establish that the total expected revenue of the ex-post approximate policy satisfies \mbox{$\apxp \geq \gamma \ts \availp(\gamma) \ts Z_\lp^* \ts c_{\min} / (c_{\min} + \Deltabar)$}. On the other hand, using $\avail(\gamma)$ to denote a lower bound on the probability that all of the resources used by a product are available at a time period under the original approximate policy, if we do not allow dependence on the quantity $\Deltabar$ when lower bounding the total expected revenue of the policy, then we can only establish that the total expected revenue of the original approximate policy satisfies $\apx \geq \frac 12 \ts \gamma \ts \avail(\gamma) \ts Z_\lp^*$. Note the half on the right side of the last inequality.

{\bf \underline{Problem Instance Yielding Tight Performance Guarantee}:}
\\
\indent In Section \ref{sec:policy}, we give a problem instance such that the optimal objective value of the \ref{eqn:fluid} exceeds the optimal total expected revenue by a factor arbitrarily close to two as the resource capacities get large. Thus, by comparing the total expected revenue of an approximate policy with the optimal objective value of the \ref{eqn:fluid}, we cannot establish that any approximate policy is asymptotically optimal as the resource capacities get large. We reconcile this problem instance with the performance guarantee in Theorem \ref{thm:expost}. Noting that we have \mbox{$1 \geq \frac{\opt}{Z_\lp^*} \geq \frac{\apxp}{Z_\lp^*} \geq 1 - \sqrt{\frac{2\log c_{\min}}{ c_{\min}}} - \frac{L + \Deltabar}{c_{\min}}$} by Theorem \ref{thm:expost}, 
if the value of $(L+\Deltabar)/c_{\min}$ for this problem instance got arbitrarily close to zero as the resource capacities get large, then the optimal objective value of the \ref{eqn:fluid} would be arbitrarily close to the optimal total expected revenue as the resource capacities get large. We demonstrate that the value of $(L+\Deltabar)/c_{\min}$ does not diminish as the resource capacities get large for this problem instance. Recall that there is one resource with a capacity of $C$ and one product with two price levels. There are $C^2$ time periods. 

In Appendix \ref{sec:unbounded_diff}, we show that the optimal objective value of the \ref{eqn:fluid} for our problem instance is $Z_\lp^* = \frac{2\ts C}{1+C}$ with the corresponding optimal solution $(\xvecbar,\yvecbar)$ with $\xbar_{1t}^1 = \ybar_{1t}^1 = \frac{1}{1+C}$ and \mbox{$\xbar_{1t}^2 = \ybar_{1t}^2 = \frac{C}{1+C}$} for all $t=1,\ldots,C^2$. Noting that we have $\sum_{k=1}^\ell \ybar_{1t}^\ell \in \{\frac{1}{1+C} , 1\}$ for all $\ell=1,2$ and $t =1,\ldots,C^2$, following the construction at the beginning of Section \ref{sec:paths}, we have two possible realizations of the random price path $\Prmvecbar_1$. The first path charges the first price level at all time periods, whereas the second price path charges the second price level at all time periods. Setting $\zbar_1^1 = \frac{1}{1+C}$ and $\zbar_1^2 = \frac{C}{1+C}$, as well as $\xbar_{1t}^1 = \ybar_{1t}^1 = \frac{1}{1+C}$ and \mbox{$\xbar_{1t}^2 = \ybar_{1t}^2 = \frac{C}{1+C}$} for all $t=1,\ldots,C^2$, the solution $(\xvecbar,\yvecbar,\zvecbar)$ is feasible to problem (\ref{eqn:path_fluid}) and yields an objective value of $\frac{2\ts C}{1+C}$, which is equal to the optimal objective value of the \ref{eqn:fluid}. Because the optimal objective value of problem (\ref{eqn:path_fluid}) is equal to that of the \ref{eqn:fluid}, this solution is optimal to problem (\ref{eqn:path_fluid}). The first price path charges the first price level at all time periods with $r_1^1 = \frac{1}{C}$ and $\lambda_{1t}^1 =1$, so we have $\reve_1^1 = \frac{1}{C} \ts C^2 = C$ and $\capa_1^1 = C^2$. Similarly, the second price path charges the second price level at all time periods with $r_1^2 = 1$, $\lambda_{1t}^2=1$ for $t=1$ and $\lambda_{1t}^2 =0$ for $t \neq 1$, so we have $\reve_1^2 = 1$ and $\capa_1^2 = 1$. Because $\reve_1^1 \geq \reve_1^2$, we have $\Fs = 1$ and $\Ss = 2$, so $\Deltabar = \capa_1^1 - \capa_1^2 = C^2 - 1$. We have $\frac{1}{c_{\min}} \ts (L+\Deltabar)  = \frac{1}{C} \ts (1 + C^2 - 1) = C$, which does not diminish as the resource capacities get large.

\section{Pricing Under Promotion Fatigue Constraints}
\label{sec:fatigue}

Part of our notation follows the one under price monotonicity constraints. The set of resources is~$\Lcal$.~The initial capacity of resource $i$ is $c_i$. The set of products is $\Jcal$. To capture the resources used by product $j$, we use the vector $\avec_j = (a_{ij} : i \in \Lcal) \in \{0,1\}^{|\Lcal|}$, where $a_{ij} = 1$ if and only if product $j $ uses resource $i$. We can offer a product at a promoted and a regular price level.~We use $\Ncal = \{\dis ,\reg \}$ to capture the promoted and regular price levels. If we charge price level~$\ell$ for product~$j$ and make a sale for this product, then we obtain a revenue of $r_j^\ell$. We expect to have $r_j^\dis \leq r_j^\reg$, so that the revenue corresponding to the promoted price level is smaller than that corresponding to the regular price level, but our results do not rely on this assumption. The set of time periods in the selling horizon is $\Tcal = \{1,\ldots,T\}$. There is at most one customer arrival at each time period. If we charge price level $\ell$ for product $j$ at time period $t$, then we make a sale for the product with probability $\lambda_{jt}^\ell$.~At each time period, we decide which products to make available for purchase and what prices to charge for the offered products. We can promote the products infrequently. Over any interval of $K$ time periods, we can offer a product at the promoted price at most once.

To capture the state of the system at the beginning of a generic time period, we use the vector $\wvec = (w_i : i \in \Lcal) \in \mathbb Z_+^{|\Lcal|}$, where $w_i$ is the remaining capacity of resource $i$. Furthermore, we use the vector $\zvec =  (z_j : j \in \Jcal) \in \mathbb Z_+^{|\Jcal|}$, where $z_j$ is the number of time periods elapsed since we offered product $j$ at the promoted price. We use the pair $(\wvec,\zvec)$ as the state. For the decisions at a generic time period, we use the vector $\xvec = ( x_j^\ell : j \in \Jcal,~\ell \in \Ncal)$, where $x_j^\ell = 1$ if and only if we charge price level $\ell$ for product $j$. Also, we use the vector $\yvec = (y_j : j \in \Jcal) \in \mathbb Z_+^{|\Jcal|}$, where $y_j$ is the number of time periods elapsed since promoting product $j$ after the pricing decisions at the current time period. We use the pair $(\xvec,\yvec)$ as the decisions. For the state $(\wvec,\zvec)$, the set of feasible decisions is 

\vspace{-8.75mm}

\begin{align}
\Fcal_t(\wvec,\zvec) = \Bigg\{& (\xvec,\yvec) \in \{0,1\}^{2 \times |\Jcal|} \times \mathbb Z_+^{|\Jcal|} : \sum_{j \in \Jcal} \sum_{\ell \in \Ncal} a_{ij} \ts \lambda_{jt}^\ell \ts x_j^\ell \leq w_i~~\forall \ts i \in \Lcal,~~~\sum_{\ell \in \Ncal} x_j^\ell \leq 1~~\forall \ts j \in \Jcal,
\nonumber
\\
&
x_j^\dis \leq \ind{z_j \geq K}~~\forall \ts j \in \Jcal,~~~ y_j = x_j^\dis + (1 - x_j^\dis) \ts (z_j + 1)~~\forall \ts j \in \Jcal
 \Bigg\}.
 \label{eqn:feas_fatigue}
\end{align}

\vspace{-3.75mm}

\indent The first two constraints in (\ref{eqn:feas_fatigue}) are identical to the first two constraints in (\ref{eqn:feas_monotone}). In the third constraint, if the number of time periods elapsed since we offered product $j$ at the promoted price is less than $K$, then we cannot offer product $j$ at the promoted price. In the fourth constraint, if we offer product $j$ at the promoted price, then we reset the number of time periods elapsed since we offered product $j$ at the promoted price to one. Otherwise, we increment the number of time periods elapsed since we offered product $j$ at the promoted price by one. In the initial state of the system, we use $\cvec = (c_i : i \in \Lcal)$ to capture the initial resource capacities. We proceed with the assumption that we can offer a product at the promoted price at the first time period, but if we do so, then we cannot offer the product at the promoted price until time period $K+1$. Thus, letting $\evec = (e_j : j \in \Jcal) \in \mathbb Z_+^{|\Jcal|}$ be the vector of all ones, the initial state is $(\cvec , K \evec)$. We can compute the optimal policy by using the same dynamic program in (\ref{eqn:dp}), but the state vector $(\wvec,\zvec)$, decision vector $(\xvec,\yvec)$ and set of feasible decisions $\Fcal_t(\wvec,\zvec)$ are as described earlier in this section. In this case, the optimal total expected revenue is given by  $\opt = J_1(\cvec , K \evec)$. We give an approximate policy with a performance guarantee by using a fluid approximation. 

\vspace{-1mm}

{\bf \underline{Fluid Approximation Under Promotion Fatigue Constraints}:}
\\
\indent We give a fluid approximation for pricing under promotion fatigue constraints. Following our fluid approximation, we describe our approach for sampling price paths according to an optimal solution to the fluid approximation in such a way that each sampled price path satisfies promotion fatigue constraints. Finally, we construct an approximate policy from the sampled price paths and give a performance guarantee for our approximate policy. Our approach for sampling price paths according to an optimal solution to the fluid approximation under promotion fatigue constraints differs from that under price monotonicity constraints. Once we sample price paths, however, our construction of the approximate policy, as well as the proof of the performance guarantee, follows from an outline similar to the one under price monotonicity constraints. To give our fluid approximation under promotion fatigue, we use the decision variable $x_{jt}^\ell$ to capture the probability that we charge price level $\ell$ for product $j$ at time period $t$ and make the product available. Furthermore, we use the decision variable $y_{jt}^\ell$ to capture the probability that we charge price level~$\ell$ for product $j$ at time period $t$. Noting the definitions of the two decision variables, the probability that we do not make product $j$ available at time period $t$ is given by $\sum_{\ell \in \Ncal} y_{jt}^\ell - \sum_{\ell \in \Ncal} x_{jt}^\ell$. Using the vectors of decision variables $\xvec = (x_{jt}^\ell: j \in \Jcal,~\ell \in \Ncal,~t \in \Tcal)$  and $\yvec = (y_{jt}^\ell: j \in \Jcal,~\ell \in \Ncal,~t \in \Tcal)$, setting $ a \wedge b = \min\{a,b\}$, we consider the fluid approximation given by 
\begin{align}
\max_{(\xvec,\yvec) \in [0,1]^{4 \times |\Jcal| \times T}}
\Bigg\{ 
&\sum_{t \in \Tcal} \sum_{j \in \Jcal} \sum_{\ell \in \Ncal} r_j^\ell \ts \lambda_{jt}^\ell \ts x_{jt}^\ell ~:~
\sum_{t \in \Tcal} \sum_{j \in \Jcal} \sum_{\ell \in \Ncal} a_{ij} \ts \lambda_{jt}^\ell \ts x_{jt}^\ell \leq c_i \qquad \forall \ts i \in \Lcal,
\nonumber
\\
&\qquad x_{jt}^\ell \leq y_{jt}^\ell \qquad \forall \ts j \in \Jcal,~\ell \in \Ncal,~t \in \Tcal,
\phantom{\sum}
\nonumber
\\
& \qquad
\!\!\!\!\!\!\!\!\! \sum_{\tau = t}^{(t+K-1) \wedge T} \!\!\!\!\!\!\!\! y_{j\tau}^\dis \leq 1 \qquad \forall \ts j \in \Jcal,~t \in \Tcal ,
\nonumber
\\
& \qquad \sum_{\ell \in \Ncal} \ts y_{jt}^\ell = 1 \qquad \forall \ts j \in \Jcal,~t \in \Tcal \Bigg\}.
\label{eqn:fluid_fatigue}
\end{align}

In the third constraint, we ensure that the total expected number of times that we offer product $j$ at the promoted price over any duration of $K$ time periods is at most one. In the fluid approximation, we enforce  promotion fatigue constraints in expectation, but we will construct an approximate policy that imposes promotion fatigue constraints with probability one. The other constraints above are identical to their counterparts in our earlier fluid approximation. We can show that the optimal objective value of the fluid approximation in (\ref{eqn:fluid_fatigue}) is an upper bound on the optimal total expected revenue under promotion fatigue constraints. This result is the analogue of Proposition \ref{pro:ub} under promotion fatigue. The proof is based on using the decisions of the optimal policy to construct a feasible solution to (\ref{eqn:fluid_fatigue}). We turn to using an optimal solution to the fluid approximation in (\ref{eqn:fluid_fatigue}) to sample price paths such that each of the price paths satisfies promotion fatigue constraints. We sample the price paths such that the probability that we charge price level~$\ell$ for product $j$ at time period $t$ also matches the optimal solution to the fluid approximation.

{\bf \underline{Sampling Price Paths from the Fluid Approximation}:}
\\
\indent
We use $(\xvecbar,\yvecbar)$ to denote an optimal solution to problem (\ref{eqn:fluid_fatigue}). Setting $\sbar_{jt} = \sum_{\tau = 1}^t \ybar_{j\tau}^\dis$ with the convention that $\sbar_{j0} = 0$, for each product $j$ and time period $t$, we define a subset of the interval $[0,1)$ as follows. By the fourth constraint in (\ref{eqn:fluid_fatigue}), we have $\sbar_{jt} - \sbar_{j,t-1} = \ybar_{jt}^\dis \leq 1$. Therefore, the interval $(\sbar_{j,t-1} , \sbar_{jt}]$ can include at most one integer. If the interval $(\sbar_{j,t-1} , \sbar_{jt}]$ does not include any integers so that $k \leq  \sbar_{j,t-1} \leq \sbar_{jt} < k+1$ for some integer $k$, then we define $\Ucal_{jt} = [ \sbar_{j,t-1} - k , \sbar_{jt} - k)$. By the last chain of inequalities, we have $0 \leq \sbar_{j,t-1} - k \leq \sbar_{jt} - k < 1$, so we get $\Ucal_{jt} \subseteq [0,1)$. On the other hand, if the interval $(\sbar_{j,t-1} , \sbar_{jt}]$ contains an integer so that $\sbar_{j,t-1} < k \leq \sbar_{jt}$ for some integer $k$, then we define $\Ucal_{jt} =[0 , \sbar_{jt} - k) \cup \ts[\sbar_{j,t-1} - k + 1 , 1)$. By the last chain of inequalities, as well as the fact that $\sbar_{jt} - \sbar_{j,t-1} \leq 1$, we have $0 \leq \sbar_{jt} - k \leq \sbar_{j,t-1} - k +1 < 1$, so we get $\Ucal_{jt} \subseteq [0,1)$. Also, because we have $\sbar_{jt} - \sbar_{j,t-1} \leq 1$, note that the intervals $[0 , \sbar_{jt} - k)$ and $ [\sbar_{j,t-1} - k + 1 , 1)$ are disjoint. Letting $\text{\sf Unif}$ be the uniform random variable over the interval $[0,1)$, for product $j$, we define the random price path $\Prmvecbar_j = (\Prmbar_{jt} : t \in \Tcal) \in \{ \dis , \reg\}^T$ such that $\Prmbar_{jt} = \dis$ if $\text{\sf Unif} \in \Ucal_{jt}$, whereas $\Prmbar_{jt} = \reg$ if $\text{\sf Unif} \not \in \Ucal_{jt}$. In the next lemma, we show that the random price path satisfies promotion fatigue constraints with probability one and characterize the marginal distribution of the price levels. 

\vspace{-1mm}

\begin{lem}[Feasible Paths] Considering the random price path $\Prmvecbar_j$, for time periods $ t,\kappa \in \Tcal$ that satisfy $t < \kappa \leq t + K - 1$, we have $\ind{\Prmbar_{jt} = \dis} + \ind{\Prmbar_{j\kappa} = \dis} \leq 1$. Also, we have $\mathbb P \{ \Prmbar_{jt} = \dis \} = \ybar_{jt}^\dis$. 

\end{lem}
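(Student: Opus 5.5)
The plan is to reinterpret the sets $\Ucal_{jt}$ as images of consecutive subintervals of the real line under reduction modulo one. Define $\pi(u) = u - \lfloor u \rfloor$, which maps $\mathbb R$ onto $[0,1)$. The first step is to check that
\begin{align*}
\Ucal_{jt} = \pi\big([\sbar_{j,t-1},\sbar_{jt})\big)
\end{align*}
for every $t \in \Tcal$. I will verify this case by case against the definition.

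If $(\sbar_{j,t-1},\sbar_{jt}]$ contains no integer, so that $k \le \sbar_{j,t-1} \le \sbar_{jt} < k+1$, then $\pi$ acts on $[\sbar_{j,t-1},\sbar_{jt})$ as the shift $u \mapsto u-k$. The image is therefore $[\sbar_{j,t-1}-k,\sbar_{jt}-k)$, which is exactly $\Ucal_{jt}$. This case also covers $\sbar_{j,t-1}=k$ being an integer.

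If instead $\sbar_{j,t-1} < k \le \sbar_{jt}$, then I split $[\sbar_{j,t-1},\sbar_{jt})$ into $[\sbar_{j,t-1},k)$ and $[k,\sbar_{jt})$. Their images under $\pi$ are $[\sbar_{j,t-1}-k+1,1)$ and $[0,\sbar_{jt}-k)$, and their union is again $\Ucal_{jt}$. The main care in the whole argument is this endpoint bookkeeping: the definition is phrased with $(\sbar_{j,t-1},\sbar_{jt}]$, while the natural object is the half-open interval $[\sbar_{j,t-1},\sbar_{jt})$, so the boundary cases where $\sbar_{j,t-1}$ or $\sbar_{jt}$ is an integer must be checked explicitly.

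The key tool is the elementary fact that $\pi$ is injective and measure-preserving on any half-open interval $[a,b)$ with $b-a \le 1$. For the marginal, I apply this to $[\sbar_{j,t-1},\sbar_{jt})$, whose length is $\ybar_{jt}^\dis \le 1$. This gives that the Lebesgue measure of $\Ucal_{jt}$ equals $\ybar_{jt}^\dis$. Since $\text{\sf Unif}$ is uniform on $[0,1)$, it follows that $\mathbb P\{\Prmbar_{jt}=\dis\} = \mathbb P\{\text{\sf Unif}\in\Ucal_{jt}\} = \ybar_{jt}^\dis$.

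For the feasibility claim, fix $t<\kappa\le t+K-1$ with $\kappa \in \Tcal$, so that $\kappa \le (t+K-1)\wedge T$. Consider the interval $I=[\sbar_{j,t-1},\sbar_{j\kappa})$. Its length is $\sum_{\tau=t}^{\kappa}\ybar_{j\tau}^\dis$, which is at most $\sum_{\tau=t}^{(t+K-1)\wedge T}\ybar_{j\tau}^\dis \le 1$ by the third constraint in (\ref{eqn:fluid_fatigue}) together with nonnegativity of $\ybar$. Therefore $\pi$ is injective on $I$.

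Inside $I$, the subintervals $[\sbar_{j,t-1},\sbar_{jt})$ and $[\sbar_{j,\kappa-1},\sbar_{j\kappa})$ are disjoint, because $\sbar_{jt} \le \sbar_{j,\kappa-1}$ by monotonicity of the partial sums. By injectivity, their images $\Ucal_{jt}$ and $\Ucal_{j\kappa}$ are disjoint. Hence $\text{\sf Unif}$ lies in at most one of the two sets, which gives $\ind{\Prmbar_{jt}=\dis}+\ind{\Prmbar_{j\kappa}=\dis}\le 1$ with probability one.
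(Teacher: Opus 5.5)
Your proof is correct, and it takes a different route from the paper's. The paper works directly with the explicit formulas for $\Ucal_{jt}$ and $\Ucal_{j\kappa}$. It notes that $(\sbar_{j,t-1},\sbar_{j\kappa}]$ contains at most one integer $k$, then compares endpoints case by case according to where $k$ falls among $\sbar_{j,t-1}\le\sbar_{jt}\le\sbar_{j,\kappa-1}\le\sbar_{j\kappa}$. That gives one case with no integer and three subcases with an integer, two of which the paper only asserts by analogy. The marginal is obtained separately by adding interval lengths.

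You instead identify $\Ucal_{jt}$ with the image of $[\sbar_{j,t-1},\sbar_{jt})$ under the fractional-part map. Both claims then follow from one fact: this map is injective and measure-preserving on half-open intervals of length at most one. The only case split left is your two-case check of that identification. There you should say explicitly that $\sbar_{jt}-\sbar_{j,t-1}=\ybar_{jt}^\dis\le 1$ is what forces $\lfloor u\rfloor=k-1$ on $[\sbar_{j,t-1},k)$ and $\lfloor u\rfloor=k$ on $[k,\sbar_{jt})$.

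What your route buys:
\begin{itemize}
\item The paper's subcases for disjointness collapse into one line.
\item It shows why the window constraint in (\ref{eqn:fluid_fatigue}) is exactly the right hypothesis: disjoint pieces of a single interval of length at most one stay disjoint after wrapping around $[0,1)$.
\item It extends with no extra work. For example, if the window sums were bounded by $m$, you would get at most $m$ promotions per window, because the map is at most $m$-to-one on a half-open interval of length at most $m$.
\end{itemize}
The paper's version, in exchange, stays with the endpoint formulas used in the definition and needs no auxiliary map.
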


\vspace{-2mm}

\noindent{\it Proof:} To show that $\ind{\Prmbar_{jt} = \dis} + \ind{\Prmbar_{j\kappa} = \dis} \leq 1$, we check that the sets $\Ucal_{jt}$ and $\Ucal_{j\kappa}$ are disjoint. Because $t < \kappa \leq t + K - 1$, by the third constraint in (\ref{eqn:fluid_fatigue}), we have $\sbar_{j\kappa} - \sbar_{j,t-1} = \sum_{\tau = t}^\kappa \ybar_{j\tau}^\dis \leq 1$, so the interval $(\sbar_{j,t-1} , \sbar_{j\kappa}]$ can have at most one integer. First, assume that the interval  $(\sbar_{j,t-1} , \sbar_{j\kappa}]$ does not have an integer, so $k \leq \sbar_{j,t-1} \leq \sbar_{j\kappa} < k+1$ for some $k \in \mathbb Z_+$. Thus, we have $\Ucal_{jt} = [ \sbar_{j,t-1} - k , \sbar_{jt} - k)$~and $\Ucal_{j\kappa} = [ \sbar_{j,\kappa-1} - k , \sbar_{j\kappa} - k)$. Because $\kappa-1 \geq t$, we have $\sbar_{j,\kappa-1} = \sum_{\tau = 1}^{\kappa - 1} \ybar_{j\tau}^\dis \geq \sum_{\tau = 1}^t \ybar_{j\tau}^\dis  = \sbar_{jt}$, which~yields  $\sbar_{jt} - k \leq \sbar_{j,\kappa-1} - k$, so $\Ucal_{jt} \cap \ts \Ucal_{j\kappa} = \varnothing $. Second, assume that the interval $(\sbar_{j,t-1} , \sbar_{j\kappa}]$ has an integer, so $\sbar_{j,t-1} < k \leq \sbar_{j\kappa}$ for some $k \in \mathbb Z_+$. In the first case, assume that $\sbar_{j,t-1} < k \leq \sbar_{jt} \leq \sbar_{j,\kappa-1} \leq \sbar_{j\kappa}$. Thus, we have  $\Ucal_{jt} =[0 , \sbar_{jt} - k) \cup \ts[\sbar_{j,t-1} - k + 1 , 1)$ and $\Ucal_{j\kappa} = [ \sbar_{j,\kappa-1} - k , \sbar_{j\kappa} - k)$. Noting that $\sbar_{jt}  \leq \sbar_{j,\kappa-1}$ and $\sbar_{j\kappa} - \sbar_{j,t-1} \leq 1$, we have  $ \sbar_{jt} - k \leq \sbar_{j,\kappa-1} - k \leq \sbar_{j,\kappa} - k \leq \sbar_{j,t-1} - k +1$, so  $\Ucal_{jt} \cap \Ucal_{j\kappa} = \varnothing$. In the second and third cases, each of which corresponds to  $\sbar_{j,t-1} \leq \sbar_{jt} < k \leq \sbar_{j,\kappa-1} \leq \sbar_{j\kappa}$ and \mbox{$\sbar_{j,t-1} \leq \sbar_{jt} \leq \sbar_{j,\kappa-1}< k \leq \sbar_{j\kappa}$}, we can use the same argument to show that $\Ucal_{jt} \cap \Ucal_{j\kappa} = \varnothing$. We turn to showing that $\mathbb P \{ \Prmbar_{jt} = \dis \} = \ybar_{jt}^\dis$. If  $\Ucal_{jt} = [ \sbar_{j,t-1} - k , \sbar_{jt} - k)$, then the length of the interval is $\sbar_{jt} - \sbar_{j,t-1}$. If $\Ucal_{jt} =[0 , \sbar_{jt} - k) \cup \ts[\sbar_{j,t-1} - k + 1 , 1)$, then the total length of the two intervals is also $\sbar_{jt} - \sbar_{j,t-1}$. In either case, we get $\mathbb P \{ \Prmbar_{jt} = \dis \} = \mathbb P \{ \text{\sf Unif} \in \Ucal_{jt}\} = \sbar_{jt} - \sbar_{j,t-1} = \ybar_{jt}^\dis$. \qed

By the lemma above, the marginal distribution of the price levels in the random price path $\Prmvecbar_j$ satisfies $\mathbb P \{ \Prmbar_{jt} = \dis \} = \ybar_{jt}^\dis$. Furthermore, the random price path $\Prmvecbar_j$ satisfies promotion fatigue constraints with probability one. We note that there are $O(T)$ possible realizations of the random price path $\Prmvecbar_j$. In particular, the realization of $\Prmbar_{jt}$ does not change as long as the uniform random variable $\text{\sf Unif}$ takes a value in the set $\Ucal_{jt}$. For some integer $k$, we have either $\Ucal_{jt} = [ \sbar_{j,t-1} - k , \sbar_{jt} - k)$ or $\Ucal_{jt} =[0 , \sbar_{jt} - k) \cup \ts[\sbar_{j,t-1} - k + 1 , 1)$. In the former case, we set  $L_{jt} = \sbar_{j,t-1} -k$ and $U_{jt} = \sbar_{jt} - k$, whereas in the latter case, we set $L_{jt} = \sbar_{jt} -k$ and $U_{jt} = \sbar_{j,t-1} - k+1$ to capture the non-trivial end points of these intervals. Focusing on product $j$, we collect these end points to obtain the set of points $\{ L_{jt} : t \in \Tcal \} \cup \{ U_{jt} : t \in \Tcal\} \cup \{0,1\}$, drop the duplicates and sort the remaining ones in increasing order to obtain the set of points $\{ \nubar_j^q : q = 0,1,\ldots,m\}$ with $0 = \nubar_j^0 < \nubar_j^1 < \ldots < \nubar_j^m = 1$ and $m = O(T)$.~The realization of none of the price levels in the random price path $\Prmvecbar_j = (\Prmbar_{jt} : t \in \Tcal)$ changes as long as the uniform random variable $\text{\sf Unif}$ takes a value in one of the intervals in the collection \mbox{$\{[\nubar_j^{q-1},\nubar_j^q): q = 1,\ldots,m\}$} and there are $O(T)$ intervals in the last collection, establishing that there are indeed $O(T)$ possible realizations of the random price path $\Prmvecbar_j$.

We use $\{\pvecbar_j^q : q \in \Mcal\}$ with $|\Mcal| = O(T)$ to denote the possible realizations of the random price path $\Prmvecbar_j$, where $\pvecbar_j^q = (\pbar_{jt}^q : t \in \Tcal) \in \{\dis,\reg\}^T$ captures the prices in price path $q$. Consider problem (\ref{eqn:path_fluid}) with the understanding that the set of possible price levels is $\Ncal = \{ \dis,\reg\}$ and the set of possible price paths $\{\pvecbar_j^q : q \in \Mcal\}$ is as given in the previous sentence. By the same reasoning in Proposition \ref{pro:equivalence}, we can show that if $(\xvecbar,\yvecbar,\zvecbar)$ is an optimal solution to problem $(\ref{eqn:path_fluid})$ after modifying this problem as described earlier in this paragraph, then $(\xvecbar,\yvecbar)$ is an optimal solution to the fluid approximation in (\ref{eqn:fluid_fatigue}). Also, by the same reasoning in Proposition \ref{pro:extreme}, we can show that if $(\xvecbar,\yvecbar,\zvecbar)$ is an extreme point optimal solution to problem $(\ref{eqn:path_fluid})$, once again, after modifying this problem as described earlier in this paragraph, then there are two price paths $\Fs,\Ss \in \Mcal$ for each product $j$ such that $\zbar_j^q = 0$ for all $q \in \Mcal \setminus \{\Fs,\Ss\}$. Thus, Propositions \ref{pro:equivalence} and \ref{pro:extreme} extend to promotion fatigue constraints. Our approximate policy under promotion fatigue constraints closely mirrors the one under price monotonicity constraints.  We define $\Rbar_j$ and $\Cbar_j$ as in just before (\ref{eqn:path_rev}), as well as $\reve_j^q$ and $\capa_j^q$ as in (\ref{eqn:path_rev}), with the understanding that $\Ncal = \{ \dis,\reg\}$ and $(\xvecbar,\yvecbar)$ is an optimal solution to problem (\ref{eqn:fluid_fatigue}). We follow the approximate policy precisely as in Section \ref{sec:policy}. By the same reasoning in Theorem \ref{thm:perf}, we can show that this approximate policy has a performance guarantee of $\max \Big\{ \frac{1}{8L} \ts , \ts \frac{1}{2} - \sqrt{\frac{\log c_{\min}}{2 \ts c_{\min}}} - \frac{L}{c_{\min}} \Big\}$ under promotion fatigue constraints. Also, we follow the ex-post approximate policy precisely as in Section \ref{sec:expost_policy}. By the same reasoning in Theorem \ref{thm:expost}, we can show that this ex-post approximate policy has a performance guarantee of $1 - \sqrt{\frac{2\log c_{\min}}{ c_{\min}}} - \frac{L + \Deltabar}{c_{\min}}$ under promotion fatigue constraints. 

\section{Fluid Approximations Under Inter-Temporal Constraints}
\label{sec:gen_const}

Under both price monotonicity and promotion fatigue constraints, we started with a natural fluid approximation. By using an optimal solution to the natural fluid approximation, we constructed a collection of price paths. In this case, our approximate policies followed one of the price paths in the collection that we constructed. It turns out that we can follow a similar outline for a pricing problem with inter-temporal constraints other than price monotonicity and promotion fatigue constraints. Under other inter-temporal price constraints, we may still construct a natural fluid approximation and use an optimal solution to the natural fluid approximation to construct a collection of price paths, in which case, we can give an approximate policy by following one of the price paths in the collection. Under other inter-temporal price constraints, however, the number of price paths in the collection is not necessarily polynomial in the input size, whereas under the price monotonicity and promotion fatigue constraints, the numbers of price paths in our collections are polynomial in the input size. Consider a pricing problem with general inter-temporal price constraints. We use the binary vector $\yvec = (y_{jt}^\ell : j \in \Jcal,~\ell \in \Ncal,~t \in \Tcal) \in \{0,1\}^{|\Jcal| \times n \times T}$ to capture the prices charged for the products over the selling horizon, where $y_{jt}^\ell = 1$ if and only if we charge price level $\ell$ for product $j$ at time period $t$.  The set of feasible price paths for product~$j$ is given by the generic polytope \mbox{$\Pcal_j = \{ \yvec_j \in [0,1]^{n \times T} : \sum_{t \in \Tcal} \sum_{\ell \in \Ncal} \Psi_{vjt}^\ell \ts y_{jt}^\ell \leq B_v~\forall \ts v \in \Vcal\}$}, where we use the vector \mbox{$\yvec_j = (y_{jt}^\ell : \ell \in \Ncal,~t \in \Tcal)$} and the set $\Vcal$ includes generic indices. Thus, if the prices that a policy charges for product $j$ are given by the vector $\yvec_j = (y_{jt}^\ell : \ell \in \Ncal,~t \in \Tcal)$ so that $y_{jt}^\ell = 1$ if and only if the policy charges price level $\ell$ for product $j$ at time period $t$, then the price path for product $j$ has to satisfy \mbox{$\yvec_j \in \Pcal_j \cap \{0,1\}^{n \times T}$}. We shortly give the polytope $\Pcal_j$ when, for example, we have price monotonicity or promotion fatigue constraints. As in our development for price monotonicity and promotion fatigue constraints, we allow the possibility of not making a product available for purchase. In this way, under price monotonicity constraints, for example,  we can shut off the demand for a product by not making it available rather than charging a large price, as charging a large price has implications on what prices we can charge at subsequent time periods under price monotonicity constraints. Our goal is to decide which products to make available and what prices to charge for the available products so that we maximize the total expected revenue, while making sure that the prices that we charge for product $j$ satisfy the feasibility constraints characterized by the polytope $\Pcal_j$. Using the decision variables \mbox{$\xvec = (x_{jt}^\ell: j \in \Jcal,~\ell \in \Ncal,~t \in \Tcal)$}  and \mbox{$\yvec = (y_{jt}^\ell: j \in \Jcal,~\ell \in \Ncal,~t \in \Tcal)$}, we consider the fluid approximation 
\begin{align}
\max_{(\xvec,\yvec) \in [0,1]^{2 \times |\Jcal| \times n \times T}}
\Bigg\{ 
& \sum_{t \in \Tcal} \sum_{j \in \Jcal} \sum_{\ell \in \Ncal} r_j^\ell \ts \lambda_{jt}^\ell \ts x_{jt}^\ell ~:~
\sum_{t \in \Tcal} \sum_{j \in \Jcal} \sum_{\ell \in \Ncal} a_{ij} \ts \lambda_{jt}^\ell \ts x_{jt}^\ell \leq c_i \qquad \forall \ts i \in \Lcal,
\nonumber
\\
& \qquad \qquad x_{jt}^\ell \leq y_{jt}^\ell \quad \forall \ts j \in \Jcal,~\ell \in \Ncal,~t \in \Tcal,~~~ \yvec_j \in \Pcal_j \quad \forall \ts j \in \Jcal
\Bigg\}.
\label{eqn:fluid_gen}
\end{align}

By the same reasoning as in Proposition \ref{pro:ub}, we can show that the optimal objective value of problem (\ref{eqn:fluid_gen}) is an upper bound on the optimal total expected revenue. To show this result, we can construct a feasible solution to problem (\ref{eqn:fluid_gen}) by using the decisions of the optimal policy. The decision variables in problem (\ref{eqn:fluid_gen}) only characterize the price for each product at each time period, so they do not characterize a price path that satisfies the inter-temporal constraints. Under price monotonicity or promotion fatigue constraints, by using a uniform random variable over the interval $[0,1)$, we could construct a random price path $\Prmvecbar_j$ for each product $j$ such that the random price path satisfies price monotonicity or promotion fatigue constraints with probability one. By using the possible realizations of this random price path, we could equivalently formulate the fluid approximation as in problem (\ref{eqn:path_fluid}). In an optimal solution to problem (\ref{eqn:path_fluid}), for each product $j$, at most two components of the decision variables $(z_j^q : q \in \Mcal)$ take strictly positive values, in which case, by following the price path corresponding to one of these decision variables, we constructed an approximate policy with a performance guarantee. It is not at all clear whether we can use  problem (\ref{eqn:fluid_gen}) to come up with a collection of price paths that satisfy the feasibility constraint.

We proceed to showing that we can indeed use problem (\ref{eqn:fluid_gen}) to construct a collection of price paths such that each of these price paths satisfies the constraints imposed on feasible price paths, as long as~the polytope $\Pcal_j$ satisfies a certain assumption. In particular, we assume that the extreme~points~of the polytope $\Pcal_j$ have integer values. When we give the form of the polytope $\Pcal_j$ under price monotonicity or promotion fatigue constraints, we show that the extreme points of the polytope do  take integer values. Let $\{ \alphavecbar_j^q : q \in \Mcal\}$ be the collection of the extreme points of $\Pcal_j$. By our assumption of extreme points with integer values, we have $\alphavecbar_j^q \in \{0,1\}^{n \times T}$. In this case, we can express any point $\yvec_j \in \Pcal_j$ as  $\sum_{q \in \Mcal} z_j^q \ts \alphavecbar_j^q$ for some $\zvec_j = (z_j^q : q \in \Mcal) \in \mathbb R_+^{|\Mcal|}$ with $\sum_{q \in \Mcal} z_j^q = 1$. Using the decision variables $\zvec = (z_j^q : j \in \Jcal,~q \in \Mcal)$, denoting the components of the vector $\alphavecbar_j^q$ as $(\alphabar_{jt}^q(\ell) : \ell \in \Ncal,~t \in \Tcal)$, the constraint $\yvec_j \in \Pcal_j$ for all $j \in \Jcal$ is equivalent to $y_{jt}^\ell = \sum_{q \in \Mcal} \alphabar_{jt}^q(\ell) \ts z_j^q$ for all $j \in \Jcal$, $\ell \in \Ncal$, $t \in \Tcal$ and $\sum_{q \in \Mcal} z_j^q = 1$ for all $j \in \Jcal$. Thus, problem (\ref{eqn:fluid_gen}) is equivalent to 
\begin{align}
\max_{(\xvec,\yvec,\zvec) \in [0,1]^{|\Jcal| \ts (2 \times n \times T + |\Mcal|)}}
\Bigg\{ 
&\sum_{t \in \Tcal} \sum_{j \in \Jcal} \sum_{\ell \in \Ncal} r_j^\ell \ts \lambda_{jt}^\ell \ts x_{jt}^\ell ~:~
\sum_{t \in \Tcal} \sum_{j \in \Jcal} \sum_{\ell \in \Ncal} a_{ij} \ts \lambda_{jt}^\ell \ts x_{jt}^\ell \leq c_i \qquad \forall \ts i \in \Lcal,
\nonumber
\\
&\qquad x_{jt}^\ell \leq y_{jt}^\ell \qquad \forall \ts j \in \Jcal,~\ell \in \Ncal,~t \in \Tcal,
\phantom{\sum_n^n}
\nonumber
\\
& \qquad
y_{jt}^\ell = \sum_{q\in \Mcal} \alphabar_{jt}^q(\ell) \ts z_j^q \qquad \forall \ts j \in \Jcal,~\ell \in \Ncal,~t \in \Tcal,
\nonumber
\\
& \qquad \sum_{q \in \mathcal M} z_j^q = 1 \qquad \forall \ts j \in \Jcal \Bigg\}.
\label{eqn:path_fluid_gen}
\end{align}

Identifying $\alphabar_{jt}^q(\ell) \in \{0,1\}$ in the problem above with $\ind{\pbar_{jt}^q = \ell}$ in (\ref{eqn:path_fluid}), the problem above has the same form as problem (\ref{eqn:path_fluid}). We can view $\alphavecbar_j^q = (\alphabar_{jt}^q(\ell) : \ell \in \Ncal,~t \in \Tcal)$ as a price path, where we charge price level $\ell$ for product $j$ at time period $t$ if and only if $\alphabar_{jt}^q(\ell) = 1$. Because $\alphavecbar_j^q$ is an extreme point of $\Pcal_j$, we have $\alphavecbar_j^q \in \Pcal_j$, so the price path corresponding to $\alphavecbar_j^q$ satisfies the \mbox{inter-temporal} price constraints. Because problems (\ref{eqn:path_fluid}) and (\ref{eqn:path_fluid_gen}) have the same structure, Propositions \ref{pro:equivalence} and \ref{pro:extreme} hold for problem (\ref{eqn:path_fluid_gen}) as well. In this case, letting $(\xvecbar,\yvecbar,\zvecbar)$ be an extreme point solution to problem $(\ref{eqn:path_fluid_gen})$, there are two price paths $\Fs,\Ss \in \Mcal$ for each product $j$ such that $\zbar_j^q = 0$ for all $q \in \Mcal \setminus \{\Fs,\Ss\}$. In this case, we can construct approximate policies with performance guarantees by following one of the price paths corresponding to $\alphavecbar_j^\Fs$ or $\alphavecbar_j^\Ss$ for product $j$. 

{\bf \underline{Feasible Price Paths Under Price Monotonicity and Promotion Fatigue}:}
\\
\indent We focus on the form of $\Pcal_j$ for price monotonicity and promotion fatigue constraints. Under  price monotonicity constraints, we can express the set of feasible price paths for product $j$ by using \mbox{$\Pcal_j = \{ \yvec_j \in [0,1]^{n \times T} : \sum_{k=\ell}^n y_{j,t-1}^k \leq \sum_{k=\ell}^n y_{jt}^k~\forall \ts \ell \in \Ncal,~t \in \Tcal \setminus \{1\},~~\sum_{\ell \in \Ncal} y_{jt}^\ell = 1 ~\forall \ts t \in \Tcal\}$}, where the first constraint ensures that if we charge price level $\ell$ or higher at time period $t-1$, then we charge price level $\ell$ or higher at time period $t$ as well, whereas the second constraint ensures that we pick one price level at time period $t$. Recall that we facilitate not making product $j$ available in (\ref{eqn:fluid_gen}) and (\ref{eqn:path_fluid_gen}) by using the decision variables $(x_{jt}^\ell : \ell \in \Ncal,~t \in \Tcal)$. By the second constraint in $\Pcal_j$, we have $\sum_{k=\ell}^n y_{jt}^\ell =1 - \sum_{k=1}^{\ell-1} y_{jt}^\ell$, so the first constraint in $\Pcal_j$ is \mbox{$\sum_{k=\ell}^n y_{j,t-1}^k + \sum_{k=1}^{\ell-1} y_{jt}^k \leq 1$}, in which case, we can show that the polyhedron $\Pcal_j$ is characterized by an interval matrix, which is known to be totally unimodular. We give the details in Appendix \ref{sec:tum_mon}. 
Under promotion fatigue constraints, on the other hand, we can express the set of feasible price paths for product~$j$ by using \mbox{$\Pcal_j = \{ \yvec_j \in [0,1]^{n \times T} : \sum_{\tau = t}^{(t+K-1) \wedge T}  y_{j\tau}^\dis \leq 1~  \forall \ts t \in \Tcal, ~~ \sum_{\ell \in \Ncal} \ts y_{jt}^\ell = 1 ~ \forall \ts t \in \Tcal \}$}, where the first constraint ensures that we can charge the promoted price at most once over any duration of $K$ time periods at most once. Because $\Ncal = \{ \dis,\reg\}$, the second constraint is equivalent to $y_{jt}^\dis\leq 1$, in which case, we can show that the polyhedron $\Pcal_j$ is characterized by an interval matrix appended by the identity matrix, which is known to be totally unimodular. We give the details in Appendix \ref{sec:tum_fatigue}. By the discussion in this section, we may not need to guess the fluid approximation under general inter-temporal price constraints. We can construct a polytope to describe what it means to have a feasible price path under such constraints. If the polytope has integer extreme points, then we can construct a fluid approximation and extract feasible price paths from this fluid approximation. 

\section{Computational Experiments}
\label{sec:exp}

We give computational experiments on synthetic datasets, as well as datasets based on a real-world hotel, to test the effectiveness of our approximate policies.

\vspace{-2mm}

\subsection{Synthetic Datasets}

\vspace{-2mm}

We describe our experimental setup and benchmark policies, followed by our computational results.~We focus on price monotonicity constraints.

{\bf \underline{Experimental Setup}:} We consider an airline network with one hub and three spokes. There is a flight leg from each spoke to the hub and from the hub to each spoke, so there are six flight legs. There is an itinerary that connects every origin-destination pair, so there are 12 itineraries. The itineraries from a spoke to another spoke connect at the hub, whereas the itineraries from a spoke to the hub or from the hub to a spoke are direct. Flight legs correspond to resources and itineraries correspond to products. For each product $j$ and time period $t$, we sample $\zeta_{jt}$ from the uniform distribution over the interval $[0,1]$. Setting $\theta_{jt} = \zeta_{jt} / \sum_{k \in \Jcal} \zeta_{kt}$, the customer arriving at time period $t$ is interested in purchasing product $j$ with probability $\theta_{jt}$. There are 40 price levels for each product. For each product $j$, the revenue corresponding to price level $\ell$ is $r_j^\ell = \frac 12 \ts \ell$ for $\ell = 1,\ldots,40$. Thus, the revenues associated with the different price levels are uniformly placed over the interval $[0.5, 20]$. For each product $j$ and time period $t$, we sample $\alpha_{jt}$ from the uniform distribution over the interval $[0.1, 0.5]$ to capture the price sensitivity of the demand, in which case, if we charge price level $\ell$ for product $j$, then a customer interested in purchasing product $j$ at time period $t$ makes a purchase with probability $\gamma_{jt}^\ell = \exp( - \alpha_{jt} \ts \frac 12 \ts \ell )$. Thus, if we charge price level $\ell$ for product $j$ at time period $t$, then we make a sale for the product with probability $\lambda_{jt}^\ell = \theta_{jt} \ts \gamma_{jt}^\ell$. 

We vary the number of time periods $T$ in the selling horizon. To come up with the capacities for the resources, we solve a dynamic program to maximize the total expected revenue from each product while satisfying price monotonicity constraints. In particular, we solve the dynamic program $\nu_{jt}(\ell) = \max_{k \in \{\ell,\ldots,n\}} \lambda_{jt}^\ell \ts r_j^\ell + \nu_{j,t+1}(k)$ with the boundary condition that $\nu_{j,T+1} = 0$. Letting \mbox{$\{ \overline \ell_{jt} : t \in \Tcal\}$} be the optimal trajectory of price levels starting with $\overline \ell_{j1} = 1$, if we ignore the resource capacities and follow the price trajectory to maximize the total expected revenue for each product, then the total expected demand for product $j$ is $\Lambda_j = \sum_{t \in \Tcal} \sum_{\ell \in \Ncal}  \ind{\overline \ell_{jt} = \ell} \ts \lambda_{jt}^\ell$, so the total expected demand for the capacity of resource $i$ is $D_i = \sum_{j \in \Jcal} a_{ij} \ts \Lambda_j$. We set the capacity of resource $i$ as $c_i = \ru{D_i / \rho}$, where we vary the parameter $\rho$ to control the tightness of the resource capacities. 

Varying \mbox{$T \in \{500,1000, 2000\}$} and $\rho \in \{1.2, 1.4, 1.6, 1.8, 2\}$, we get 15 parameter configurations. We generate a test problem for each parameter configuration using the approach above.

{\bf \underline{Benchmark Policies\phantom{p}\!\!\!}:} We test six benchmarks, four based on our approximate policies and two based on using price paths that are agnostic to resource capacities. 

{\it \underline{Benchmarks Based on Approximate Policies}.} Our first benchmark is the approximate policy in Section~\ref{sec:policy}, but we make product $j$ available at time period $t$ with probability $\thetabar_{jt} \ts \frac{\Cbar_j}{\max\{ \capa_j^\Fs , \Cbar_j\}}$, so we set $\gamma = 1$. We refer to this benchmark as APT, standing for approximate policy with availability threshold.  The practical performance of our approximate policy is slightly better with $\gamma = 1$.~Our second benchmark, referred to as AP1, is the approximate policy in Section \ref{sec:policy}, but we make product $j$ available at time period $t$  with probability $\thetabar_{jt}$. Our third benchmark is the  ex-post approximate policy in Section \ref{sec:expost_policy}, but we make product $j$ available at time period $t$ with probability $\thetabarp_{jt} \frac{c_{\min}}{c_{\min} + \Deltabar}$. Our fourth benchmark is the ex-post approximate policy, but we make product $j$ available at time period $t$ with probability $\thetabarp_{jt}$. We refer to the third and fourth benchmarks as EPT and EP1.

%Our first benchmark is the approximate policy in Section~\ref{sec:policy}. In the approximate policy, we choose the tuning parameter $\gamma$ to get a performance guarantee of $\frac{1}{2} - \sqrt{\frac{\log c_{\min}}{2 \ts c_{\min}}} - \frac{L}{c_{\min}}$. We refer to this benchmark as APT, standing for approximate policy with tuning parameter.  Setting $\gamma$ away from one is necessary to give a performance guarantee, but the practical performance of our approximate policy can be stronger when we set $\gamma = 1$. Our second benchmark is the approximate policy in Section \ref{sec:policy} with $\gamma = 1$. We refer to this benchmark as AP1. Our third benchmark is the  ex-post approximate policy in Section \ref{sec:expost_policy}. We refer to this benchmark as EPT, standing for ex-post approximate policy with tuning parameter. Our fourth benchmark, which we refer to as EP1, is the ex-post approximate policy with $\gamma =1$.

{\it \underline{Benchmarks Based on Capacity Agnostic Prices}.} Our fifth benchmark is based on computing a price path for each product under price monotonicity constraints without taking the resource capacities into consideration. As done when calibrating the capacities of the resources in our experimental setup, we solve the dynamic program  $\nu_{jt}(\ell) = \max_{k \in \{\ell,\ldots,n\}} \lambda_{jt}^k \ts r_j^k + \nu_{j,t+1}(k)$ with the boundary condition that $\nu_{j,T+1} = 0$. Letting  $\{ \overline \ell_{jt} : t \in \Tcal\}$ be the optimal trajectory of price levels starting with $\overline \ell_{j0} = 1$, we charge the price level $\overline \ell_{jt}$ for product $j$ at time period $t$, as long as we have resource capacities to serve a request for product $j$. Otherwise, we do not make product $j$ available. We refer to this benchmark as UCP, standing for unlimited capacity price paths. 
Our sixth benchmark adjusts these price paths to take resource capacities into consideration.  Using the trajectory of price levels $\{ \overline \ell_{jt} : t \in \Tcal\}$ as earlier in this paragraph, we set \mbox{$\overline \Lambda_{jt} = \sum_{\ell \in \Ncal} \ind{\ell = \overline \ell_{jt}} \ts \lambda_{jt}^\ell$} and $\Rbar_{jt} = \sum_{\ell \in \Ncal} \ind{\ell = \overline \ell_{jt}} \ts r_j^\ell \ts \lambda_{jt}^\ell$ to, respectively, denote the demand probability and expected revenue from product $j$ at time period $t$ under the trajectory of price levels. Using the decision variable $x_{jt}$ to capture the probability of making product $j$ available at time period~$t$, we solve the linear program $\max_{\xvec \in [0,1]^{|\Jcal| \times T}}\{ \sum_{t \in \Tcal} \sum_{j \in \Jcal} \Rbar_{jt}\ts x_{jt} : \sum_{t \in \Tcal} \sum_{j \in \Jcal} \ts a_{ij} \ts  \overline \Lambda_{jt} \ts x_{jt} \leq c_i ~~\forall \ts i \in \Lcal\}$, where we have the vector $\xvec = (x_{jt} : j \in \Jcal,~t \in \Tcal)$. In the linear program, we find the probability of making each product available at each time period to maximize the total expected revenue, while adhering to the resource capacities. Letting $\xvecbar$ be an optimal solution, we make product $j$ available at time period $t$ with probability $\xbar_{jt}$. If we make product $j$ available at time period $t$, then we charge price level $\overline \ell_{jt}$. We refer to this benchmark as CAP, standing for capacity aware price paths. 

\begin{table}
\begin{center}
\scriptsize
\begin{tabular}{|c|cccccc|ccccc|}
\hline
Param. &  \multicolumn{6}{c|}{Total Expected Revenue} & \multicolumn{5}{c|}{Percent Gap with AP1} \\
$(\rho,T)$ & APT & AP1 & EPT & EP1 & UCP & CAP & APT & EPT & EP1 & UCP & CAP \\
\hline
\hline
$(1.2,~\ts500)$	&	92.56	&	92.56	&	89.66	&	91.58	&	81.91	&	82.15	&	0.00	&	3.13	&	1.06	&	11.51	&	11.25	\\
$(1.2,1000)$	&	94.61	&	94.61	&	89.69	&	92.71	&	83.99	&	84.18	&	0.01	&	5.21	&	2.01	&	11.23	&	11.03	\\
$(1.2,2000)$	&	96.40	&	96.40	&	91.03	&	94.03	&	84.21	&	85.16	&	0.00	&	5.57	&	2.45	&	12.64	&	11.65	\\
\hline
$(1.4,~\ts500)$	&	91.41	&	91.45	&	88.56	&	91.09	&	73.22	&	75.53	&	0.04	&	3.16	&	0.40	&	19.94	&	17.41	\\
$(1.4,1000)$	&	94.05	&	94.08	&	89.81	&	92.91	&	74.63	&	77.16	&	0.04	&	4.54	&	1.24	&	20.68	&	17.98	\\
$(1.4,2000)$	&	95.95	&	95.96	&	92.39	&	95.09	&	74.35	&	78.13	&	0.01	&	3.72	&	0.91	&	22.52	&	18.59	\\
\hline
$(1.6,~\ts500)$	&	90.74	&	90.77	&	89.29	&	90.56	&	66.43	&	71.05	&	0.04	&	1.64	&	0.24	&	26.82	&	21.73	\\
$(1.6,1000)$	&	93.12	&	93.29	&	89.93	&	93.22	&	67.53	&	72.24	&	0.18	&	3.60	&	0.07	&	27.61	&	22.57	\\
$(1.6,2000)$	&	95.45	&	95.52	&	93.46	&	95.18	&	67.28	&	73.48	&	0.07	&	2.16	&	0.35	&	29.57	&	23.07	\\
\hline
$(1.8,~\ts500)$	&	90.36	&	90.40	&	88.00	&	89.92	&	60.90	&	67.73	&	0.04	&	2.65	&	0.53	&	32.63	&	25.08	\\
$(1.8,1000)$	&	93.18	&	93.18	&	90.77	&	92.58	&	62.14	&	69.02	&	0.00	&	2.59	&	0.65	&	33.32	&	25.93	\\
$(1.8,2000)$	&	95.15	&	95.22	&	91.51	&	94.86	&	61.63	&	70.08	&	0.07	&	3.89	&	0.38	&	35.27	&	26.40	\\
\hline
$(2.0,~\ts500)$	&	89.97	&	89.98	&	87.09	&	89.29	&	56.99	&	65.40	&	0.01	&	3.21	&	0.76	&	36.67	&	27.32	\\
$(2.0,1000)$	&	92.84	&	92.85	&	88.35	&	92.12	&	57.60	&	66.61	&	0.01	&	4.85	&	0.78	&	37.96	&	28.26	\\
$(2.0,2000)$	&	94.95	&	95.00	&	91.67	&	94.53	&	57.34	&	67.70	&	0.05	&	3.50	&	0.49	&	39.64	&	28.73	\\
\hline
\hline
Average	&	93.38	&	93.42	&	90.08	&	92.64	&	68.68	&	73.71	&	0.04	&	3.56	&	0.82	&	26.53	&	21.13	\\
\hline
\end{tabular}
\caption{Total expected revenues obtained by the benchmarks for the synthetic datasets.}
\label{tab:exp_syn}
\end{center}
\vspace{-7mm}
\end{table}

{\bf \underline{Computational Results\phantom{p}\!\!\!}:} We give our computational results in Table \ref{tab:exp_syn}. The first column gives the parameter configuration by using the pair $(\rho,T)$. The second to seventh columns give the total expected revenues obtained by our benchmarks. The optimal objective value of the \ref{eqn:fluid} is an upper bound on the optimal total expected revenue, so we report the total expected revenues of all benchmarks by normalizing with the upper bound. The benchmarks APT and AP1 use our approximate policy, EPT and EP1 use our ex-post approximate policy, UCP and CAP use price paths computed without taking resource capacities into consideration. Noting that AP1 is one of the strongest benchmarks, the eighth to twelfth columns give the percent gap between the total expected revenues of AP1 and the remaining five benchmarks. We estimate the total expected revenues of all benchmarks using Monte Carlo simulation.

Our results indicate that our approximate policy performs quite well. On average, AP1 obtains 93.42\% of the upper bound on the optimal total expected revenue. The capacities of the resources are larger for the test problems with a larger number of time periods in the selling horizon. In alignment with the performance guarantees for our approximate and ex-post approximate policies, APT, AP1, EPT and EP1 obtain larger fractions of the upper bound on the optimal total expected revenue when the value of $T$ is large so that there is a large number of time periods in the selling horizon. The performance of UCP and CAP is poor especially when the value of $\rho$ is large so that the resource capacities are tight, which is not surprising as these benchmarks use price trajectories computed without using resource capacities. Our approximate and ex-post approximate policies also provide larger improvements over UCP and CAP when the resource capacities are tight. 

\vspace{-3mm}

\subsection{Hotel Datasets}

\vspace{-3mm}

We give computational experiments based on a publicly available dataset that provides bookings from an urban hotel; see \cite{Ka19}.

\vspace{-0.275mm}

{\bf \underline{Experimental Setup}:} The dataset gives the bookings at an urban hotel over a year. Each row in the dataset corresponds to a booking, giving the date when the booking was made,  start date and number of days for the stay, as well as the price per stay day paid. We focus on the bookings made up to nine weeks in advance of the start date of the stay and for fewer than seven nights,  dropping all other bookings, yielding 11 bookings per day on average. We refer to the number of days between the booking date and stay date as the lead time of a booking. Letting~$N_\tau$ be the number of bookings with a lead time of $\tau$ days in the dataset, we estimate the fraction of bookings with a lead time of $\tau$ days as $\pi_\tau = N_\tau / \sum_{s = 1}^{9 \times 7} N_s$. Similarly, letting $M_\kappa$ be the number of bookings for $\kappa$ nights in the dataset, we estimate the fraction of bookings for $\kappa$ nights as $\zeta_\kappa = M_\kappa / \sum_{s=1}^7 M_s$. 
In our benchmarks, we consider stays in the hotel over one week. Therefore, there are seven resources, each corresponding to a different night. A stay can start and end on any day in the week, yielding $\frac 12 \times 7 \times 8 = 28$ products. There are $K$ different price levels for each product, where we vary the parameter $K$. If product $j$ corresponds to a stay of $q$ days, then the revenue corresponding to price level $\ell$ for product $j$ is  $r_j^\ell = (40 + \frac{160}{K-1} \ts (\ell-1)) \times q$, so the price per stay day takes values in the interval $[40,200]$, which is in alignment with the prices in the dataset. We divide the selling horizon into nine segments, so that the customers booking in different segments have different price sensitivities. If we charge price level $\ell$ for a product that involves $q$ nights of stay, then a customer arriving in segment $s$ books with probability $1 / (1 + \exp(\beta_s + \alpha_s \ts(40 + \frac{160}{K-1} \ts (\ell-1)) \times q))$. In Appendix \ref{sec:hotel_data}, we explain our approach for estimating the parameters $\{ (\beta_s , \alpha_s) : s = 1,\ldots,9\}$. 

\vspace{-0.275mm}

Whenever a booking inquiry occurs, we sample the start day of the stay and number of nights for the stay from the empirical distributions obtained from the dataset as described at the beginning of the previous paragraph. In this case, if a customer arriving in segment $s$ is interested in booking some product $j$ that involves $q$ nights of stay and we charge price level $\ell$ for this product, then the customer makes the booking with probability $1 / (1 + \exp(\beta_s + \alpha_s \ts(40 + \frac{160}{K-1} \ts (\ell-1) ) \times q))$. Otherwise, the customer leaves without a booking. We continue focusing on price monotonicity constraints so that the customers making bookings early in the selling horizon are charged lower prices. To come up with the room capacities available on each night, we use the same approach as in our computational experiments with synthetic datasets. Thus, the total expected demand for the capacity on each night under monotone prices that maximize the total expected revenue exceeds the room capacity on that night by a factor of $\rho$. We vary the parameter $\rho$. Varying $K \in \{10,20,40\}$ and $\rho \in \{1.2, 1.4, 1.6, 1.8, 2\}$, we get 15 parameter configurations. For each parameter configuration, we have a test problem constructed by using the approach discussed so far in this section.

\begin{table}
\begin{center}
\scriptsize
\begin{tabular}{|c|cccccc|ccccc|}
\hline
Param. &  \multicolumn{6}{c|}{Total Expected Revenue} & \multicolumn{5}{c|}{Percent Gap with AP1} \\
$(\rho,K)$ & APT & AP1 & EPT & EP1 & UCP & CAP & APT & EPT & EP1 & UCP & CAP \\
\hline
\hline
$(1.2,10)$	&	97.84	&	97.88	&	96.46	&	97.30	&	84.08	&	88.36	&	0.04	&	1.45	&	0.59	&	14.10	&	9.73	\\
$(1.2,20)$	&	97.82	&	97.84	&	96.95	&	97.61	&	83.57	&	87.85	&	0.02	&	0.91	&	0.23	&	14.58	&	10.22	\\
$(1.2,40)$	&	97.87	&	97.87	&	97.41	&	97.64	&	83.41	&	87.91	&	0.00	&	0.47	&	0.23	&	14.77	&	10.17	\\
\hline
$(1.4,10)$	&	97.85	&	97.86	&	95.96	&	97.23	&	74.07	&	81.54	&	0.01	&	1.95	&	0.64	&	24.32	&	16.68	\\
$(1.4,20)$	&	97.72	&	97.74	&	96.88	&	97.54	&	73.38	&	80.82	&	0.02	&	0.88	&	0.20	&	24.92	&	17.30	\\
$(1.4,40)$	&	97.67	&	97.68	&	97.07	&	97.54	&	73.14	&	80.99	&	0.01	&	0.63	&	0.14	&	25.12	&	17.09	\\
\hline
$(1.6,10)$	&	97.32	&	97.45	&	95.50	&	97.09	&	66.81	&	76.37	&	0.14	&	2.01	&	0.37	&	31.45	&	21.64	\\
$(1.6,20)$	&	97.52	&	97.53	&	96.72	&	97.32	&	66.02	&	75.60	&	0.01	&	0.83	&	0.21	&	32.31	&	22.49	\\
$(1.6,40)$	&	97.54	&	97.55	&	97.14	&	97.43	&	65.74	&	75.73	&	0.00	&	0.42	&	0.12	&	32.61	&	22.37	\\
\hline
$(1.8,10)$	&	97.38	&	97.40	&	95.87	&	97.19	&	61.08	&	72.21	&	0.01	&	1.57	&	0.22	&	37.28	&	25.86	\\
$(1.8,20)$	&	97.29	&	97.34	&	96.83	&	97.26	&	60.33	&	71.49	&	0.05	&	0.52	&	0.08	&	38.03	&	26.56	\\
$(1.8,40)$	&	97.37	&	97.38	&	97.09	&	97.32	&	59.98	&	71.54	&	0.02	&	0.30	&	0.06	&	38.40	&	26.54	\\
\hline
$(2.0,10)$	&	97.14	&	97.22	&	95.80	&	97.05	&	56.63	&	68.95	&	0.08	&	1.46	&	0.17	&	41.75	&	29.07	\\
$(2.0,20)$	&	97.29	&	97.29	&	96.73	&	97.20	&	55.80	&	68.18	&	0.00	&	0.57	&	0.09	&	42.65	&	29.92	\\
$(2.0,40)$	&	97.25	&	97.27	&	96.84	&	97.22	&	55.42	&	68.12	&	0.03	&	0.44	&	0.05	&	43.03	&	29.97	\\
\hline
\hline
Average	&	97.53	&	97.55	&	96.62	&	97.33	&	67.96	&	77.04	&	0.03	&	0.96	&	0.23	&	30.35	&	21.04	\\
\hline
\end{tabular}
\caption{Total expected revenues obtained by the benchmarks for the hotel datasets.}
\label{tab:exp_hotel}
\end{center}
\vspace{-7mm}
\end{table}

{\bf \underline{Computational Results\phantom{p}\!\!\!}:} We give our computational results in Table \ref{tab:exp_hotel}. The layout of this table is identical to that of Table \ref{tab:exp_syn} other than that we use the pair $(\rho,K)$ to index our test problems, where $K$ is the number of different price levels for a product. The results in Table \ref{tab:exp_hotel} are aligned with those in Table \ref{tab:exp_syn}. On average, AP1 obtains 97.55\% of the upper bound on the optimal total expected revenue. The performance of APT, AP1, EPT and EP1 is close to each other, but using a tuning parameter of one gives a slight but consistent improvement. The performance of UCP and CAP is especially poor when the resource capacities are tight.

\vspace{-3mm}

\section{Conclusions}
\label{sec:conc}

\vspace{-3mm}

Our work opens several directions for research. In our model, the demand for a product depends on its price. It would be useful to study the case where the price charged for a product affects the demands for other products. Our efforts in this direction were not fruitful and this extension appears to be non-trivial and needs new set of tools. Also, we give a unifying framework that extends to general inter-temporal price constraints. It would be interesting to study  policies under other specific inter-temporal price constraints. Lastly, one can study other fluid approximations that may provide tighter upper bounds, potentially yielding stronger performance guarantees.

\bibliographystyle{ormsv080}
\renewcommand*{\bibfont}{\footnotesize}
\renewcommand*{\bibfont}{\normalfont\footnotesize\linespread{1}\selectfont}
\setlength{\bibsep}{0.5pt}
\bibliography{references}

\clearpage

\ECSwitch

\begin{APPENDICES}

\vspace{-10mm}

\begin{center}
\large
\underline{Electronic Companion}
\\
Inter-Temporal Price Constraints in Dynamic Pricing: Performance Guarantees Under Price Monotonicity and Promotion Fatigue
\end{center}

\titleformat{\section}{\normalsize \sf \bfseries}{\thesection}{1em}{}

\section{Proof of Proposition \ref{pro:ub}}
\label{sec:ub}

\vspace{-3mm}

We define the Bernoulli random variable $\Xrm_{jt}^\ell$ such that $\Xrm_{jt}^\ell =1$ if and only if the optimal policy charges price level $\ell$ for product $j$ at time period $t$. Furthermore, we define the random variable $\Yrm_{jt}^\ell$ recursively as $\Yrm_{jt}^\ell = \Xrm_{jt}^\ell + (1-\sum_{k \in \Ncal} \Xrm_{jt}^k) \ts \Yrm_{j,t-1}^\ell$ with the boundary condition that \mbox{$\Yrm_{j0}^1 =1$} and \mbox{$\Yrm_{j0}^\ell = 0$} for all $\ell \in \Ncal \setminus \{1\}$. Note that we have $\Yrm_{jt}^\ell = 1$ if and only if price level~$\ell$ is the lower bound for the price of product $j$ after we make the pricing decisions at time period $t$. We define the solution~$(\xvecbar,\yvecbar)$ for the \ref{eqn:fluid} such that $\xbar_{jt}^\ell = \mathbb E\{ \Xrm_{jt}^\ell \}$ and \mbox{$\ybar_{jt}^\ell = \mathbb E\{ \Yrm_{jt}^\ell\}$}.~We verify that the solution $(\xvecbar,\yvecbar)$~is feasible to the \ref{eqn:fluid}.~We define the Bernoulli random variable $\Brm_{jt}^\ell$ such that $\Brm_{jt}^\ell = 1$ if and only if the customer arriving at time period $t$ purchases product $j$ at price level $\ell$ under the optimal policy. We have $\mathbb E\{ \Brm_{jt}^\ell \ts | \ts \Xrm_{jt}^\ell = 1\} = \lambda_{jt}^\ell$ and \mbox{$\mathbb E\{ \Brm_{jt}^\ell \ts | \ts \Xrm_{jt}^\ell = 0\} = 0$}, so \mbox{$\mathbb E\{ \Brm_{jt}^\ell \} = \mathbb E\{ \Brm_{jt}^\ell \ts | \ts \Xrm_{jt}^\ell = 1\} \ts \mathbb P \{ \Xrm_{jt}^\ell = 1\} = \lambda_{jt}^\ell \ts \xbar_{jt}^\ell$}. The customer arriving at time period $t$ purchases product $j$ if and only if $\sum_{\ell \in \Ncal} \Brm_{jt}^\ell = 1$. Consumption of the capacity of resource~$i$ under the optimal policy cannot exceed its initial capacity, so  \mbox{$\sum_{t \in \Tcal} \sum_{j \in \Jcal} a_{ij} \ts \sum_{\ell \in \Ncal} \Brm_{jt}^\ell \leq c_i$} with probability one. Using $\mathbb E\{ \Brm_{jt}^\ell \} = \lambda_{jt}^\ell \ts \xbar_{jt}^\ell$, taking expectations in the last inequality yields $\sum_{t \in \Tcal} \sum_{j \in \Jcal} \sum_{\ell \in \Ncal} a_{ij} \ts \lambda_{jt}^\ell \ts \xbar_{jt}^\ell \leq c_i$, verifying the first constraint in the \ref{eqn:fluid}.

At each time period, the optimal policy either makes product $j$ unavailable or charges one price for product $j$, so $\sum_{\ell \in \Ncal} \Xrm_{jt}^\ell \leq 1$, in which case, we get \mbox{$\Yrm_{jt}^\ell = \Xrm_{jt}^\ell + (1-\sum_{k \in \Ncal} \Xrm_{jt}^k) \ts \Yrm_{j,t-1}^\ell \geq \Xrm_{jt}^\ell$}, so taking expectations in the last chain of inequalities yields $\ybar_{jt}^\ell \geq \xbar_{jt}^\ell$, verifying the second constraint in the \ref{eqn:fluid}. We have $\sum_{\ell \in \Ncal} \Yrm_{jt}^\ell = \sum_{\ell \in \Ncal} \Xrm_{jt}^\ell + (1 - \sum_{\ell \in \Ncal} \Xrm_{jt}^\ell) \ts \sum_{\ell \in \Ncal} \Yrm_{j,t-1}^\ell$ by the definition of $\Yrm_{jt}^\ell$. Therefore, if we have $\sum_{\ell \in \Ncal} \Yrm_{j,t-1}^\ell = 1$, then we also have $\sum_{\ell \in \Ncal} \Yrm_{jt}^\ell  =1$. By the boundary condition in the definition of $\Yrm_{jt}^\ell$, we have $\sum_{\ell \in \Ncal} \Yrm_{j0}^\ell =1$, which implies that $\sum_{\ell \in \Ncal} \Yrm_{jt}^\ell =1$ for all $t \in \Tcal$. Taking expectations in the last equality yields $\sum_{\ell \in \Ncal} \ybar_{jt}^\ell = 1$, verifying the fourth constraint in the \ref{eqn:fluid}. Using the definition of $\Yrm_{jt}^\ell$ once more, we have the equality $\sum_{k=\ell}^n \Yrm_{jt}^k = \sum_{k =\ell}^n \Xrm_{jt}^k + (1-\sum_{k \in \Ncal} \Xrm_{jt}^k) \ts \sum_{k=\ell}^n \Yrm_{j,t-1}^k$. We refer to this equality as the preservation of partial sums. Consider three cases. First, if $\sum_{k \in \Ncal} \Xrm_{jt}^k =0$, then the preservation of partial sums yields $\sum_{k=\ell}^n \Yrm_{jt}^k = \sum_{k=\ell}^n \Yrm_{j,t-1}^k$. Second, if $\Xrm_{jt}^q = 1$ with $q \geq \ell$, then we have $\sum_{k =\ell}^n \Xrm_{jt}^k =1$ and $\sum_{k \in \Ncal} \Xrm_{jt}^k = 1$, in which case, the preservation of partial sums yields $\sum_{k=\ell}^n \Yrm_{jt}^k =1$. Because $\sum_{k \in \Ncal} \Yrm_{j\tau}^k=1$ for all $\tau \in \Tcal$ by the discussion in this paragraph, we have the chain of inequalities $\sum_{k=\ell}^n \Yrm_{jt}^k =1 = \sum_{k \in \Ncal} \Yrm_{j,t-1}^k \geq \sum_{k=\ell}^n \Yrm_{j,t-1}^k$. Third, if $\Xrm_{jt}^q = 1$ with $q < \ell$, then $\sum_{k=\ell}^n \Xrm_{jt}^k = 0$.

To address the third case, we use the fact that the prices charged by the optimal policy are monotonically increasing over time. Because $\Xrm_{jt}^q=1$, the optimal policy charges price level $q$ for product $j$ at time period $t$, so noting that $q<\ell$, the optimal policy cannot charge price levels $\{\ell,\ldots,n\}$ for product $j$ at time periods $\{1,\ldots,t-1\}$. Therefore, we have $\sum_{k=\ell}^n \Xrm_{j\tau}^k=0$ for all $\tau = 1,\ldots,t-1$. Noting that we also have $\sum_{k=\ell}^n \Xrm_{jt}^k=0$, by the preservation of partial sums, we have $\sum_{k=\ell}^n \Yrm_{j\tau}^k = (1-\sum_{k \in \Ncal} \Xrm_{j\tau}^k) \ts \sum_{k=\ell}^n \Yrm_{j,\tau-1}^k \leq \sum_{k=\ell}^n \Yrm_{j,\tau-1}^k$ for all $\tau = 1,\ldots,t$. Because $\Xrm_{jt}^q = 1$ with \mbox{$q < \ell$}, it must be the case that $\ell \geq 2$, so using the boundary condition in the definition of $\Yrm_{jt}^\ell$, we get $\sum_{k=\ell}^n \Yrm_{j0}^k = 0$. In this case, having  $\sum_{k=\ell}^n \Yrm_{j\tau}^k \leq \sum_{k=\ell}^n \Yrm_{j,\tau-1}^k$ for all $\tau = 1,\ldots,t$ implies that $\sum_{k=\ell}^n \Yrm_{j\tau}^k =0$ for all $\tau = 1,\ldots,t$. In all of the three cases, we have $\sum_{k=\ell}^n \Yrm_{jt}^k \geq \sum_{k=\ell}^n \Yrm_{j,t-1}^k$, so taking expectations in this inequality yields $\sum_{k=\ell}^n \ybar_{jt}^k \geq \sum_{k=\ell}^n \ybar_{j,t-1}^k$, verifying the third constraint in the \ref{eqn:fluid}. Thus, the solution $(\xvecbar,\yvecbar)$ is feasible to the \ref{eqn:fluid}. By the definition of the random variable $\Brm_{jt}^\ell$, the optimal total expected revenue is $\opt = \sum_{t \in \Tcal} \sum_{j \in \Jcal} \sum_{\ell \in \Ncal} r_j^\ell \ts \mathbb E\{ \Brm_{jt}^\ell \}$, but noting that $\mathbb E\{ \Brm_{jt}^\ell\} = \lambda_{jt}^\ell \ts \xbar_{jt}^\ell$, we get $\opt = \sum_{t \in \Tcal} \sum_{j \in \Jcal} \sum_{\ell \in \Ncal} r_j^\ell \ts \lambda_{jt}^\ell \ts \xbar_{jt}^\ell$. The last expression is the objective value of the \ref{eqn:fluid} evaluated at the solution $(\xvecbar,\yvecbar)$. Thus, the solution $(\xvecbar,\yvecbar)$ is feasible to the \ref{eqn:fluid} and provides an objective value of $\opt$ for this problem, so the optimal objective value of the \ref{eqn:fluid} is at least $\opt$. \qed

In the proof of Proposition \ref{pro:ub}, note that we use the monotonicity of the prices charged by the optimal policy only when dealing with the third constraint in the \ref{eqn:fluid}.

\section{Proof of Proposition \ref{pro:equivalence}}
\label{sec:equivalence}

\vspace{-2mm}

Letting $(\xvecbar,\yvecbar,\zvecbar)$ be an optimal solution to problem (\ref{eqn:path_fluid}), we verify that the solution $(\xvecbar,\yvecbar)$ is feasible to the \ref{eqn:fluid}. The first two constraints in (\ref{eqn:path_fluid}) are identical to the first two constraints~in the \ref{eqn:fluid}, so we only check the last two constraints in the \ref{eqn:fluid}. The price path $\pvecbar_j^q$ is monotone over time, which implies that if this price path charges price level $k$ or larger at time period $t-1$, then it must also charge price level $k$ or larger at time period $t$, so we have $\ind{\pbar_{j,t-1}^q \geq k} \leq \ind{\pbar_{jt}^q \geq k}$. Using the fact that $(\xvecbar,\yvecbar,\zvecbar)$ satisfies the third constraint in (\ref{eqn:path_fluid}), we get $\sum_{k=\ell}^n \ybar_{jt}^k  = \sum_{q \in \Mcal} \zbar_j^q \ts \sum_{k=\ell}^n \ind{\pbar_{jt}^q = k} = \sum_{q \in \Mcal} \zbar_j^q \ts \ind{\pbar_{jt}^q \geq \ell} $ for all $t \in \Tcal$.~Thus, the last chain of equalities yields $\sum_{k=\ell}^n \ybar_{j,t-1}^k  = \sum_{q \in \Mcal} \zbar_j^q \ts {\bf 1}_{(\pbar_{j,t-1}^q \geq \ell)} \leq \sum_{q \in \Mcal} \zbar_j^q \ts \ind{\pbar_{jt}^q \geq \ell}  = \sum_{k=\ell}^n \ybar_{jt}^k$, verifying the third constraint in the \ref{eqn:fluid}. Using the fact that $(\xvecbar,\yvecbar,\zvecbar)$ satisfies the third constraint in (\ref{eqn:path_fluid}) once more, we get $\sum_{\ell \in \Ncal} \ybar_{jt}^\ell = \sum_{q \in \Mcal} \zbar_j^q \sum_{\ell \in \Ncal} \ind{\pbar_{jt}^q = \ell} = \sum_{q \in \Mcal} \zbar_j^q = 1$, where the second equality holds because $\sum_{\ell \in \Ncal} \ind{\pbar_{jt}^q = \ell} =1$ and the third equality holds by the fourth constraint in (\ref{eqn:path_fluid}).  Thus, the fourth constraint in the \ref{eqn:fluid} holds. Because the solution $(\xvecbar,\yvecbar)$ is feasible to the \ref{eqn:fluid} and this problem shares the same objective function with problem (\ref{eqn:path_fluid}), the optimal objective value of the \ref{eqn:fluid} is at least as large as that of problem (\ref{eqn:path_fluid}).  

On the other hand, letting $(\xvecbar, \yvecbar)$ be an optimal solution to the \ref{eqn:fluid}, we construct the random price path $\Prmvecbar_j$ as discussed at the beginning of Section \ref{sec:paths}. We verify that the solution $(\xvecbar,\yvecbar,\zvecbar)$ with $\zbar_j^q = \mathbb P \{ \Prmvecbar_j = \pvecbar_j^q\}$ is feasible to problem (\ref{eqn:path_fluid}). Once again, we only check the last two constraints in problem (\ref{eqn:path_fluid}). By the construction of the random price path $\Prmvecbar_j$ as discussed at the beginning of Section \ref{sec:paths}, we have $\mathbb P \{ \Prmbar_{jt} = \ell \} = \ybar_{jt}^\ell$, in which case, we get the chain of equalities given by  $\sum_{q \in \Mcal} \ind{\pbar_{jt}^q = \ell} \ts \zbar_j^q = \sum_{q \in \Mcal} \ind{\pbar_{jt}^q = \ell} \ts \mathbb P \{ \Prmvecbar_j = \pvecbar_j^q\} = \mathbb P \{ \Prmbar_{jt} = \ell\} = \ybar_{jt}^\ell$, where the second equality holds by computing the probability that the random price path $\Prmvecbar_j$ charges price level $\ell$ at time period~$t$ by considering all its realizations with price level $\ell$ at time period~$t$, verifying the third constraint in (\ref{eqn:path_fluid}). Furthermore, we have $\sum_{q \in \Mcal} \zbar_j^q = \sum_{q \in \Mcal}  \mathbb P \{ \Prmvecbar_j = \pvecbar_j^q\} = 1$ because $\{ \pvecbar_j^q :q \in \Mcal\}$ is the set of all possible realizations of the random price path $\Prmvecbar_j$, verifying  the fourth constraint in (\ref{eqn:path_fluid}). In this case, because the solution $(\xvecbar,\yvecbar,\zvecbar)$ is feasible to problem (\ref{eqn:path_fluid}) and this problem shares the same objective function with the \ref{eqn:fluid}, the optimal objective value of problem (\ref{eqn:path_fluid}) is at least as large as that of the \ref{eqn:fluid}. \qed

By Proposition \ref{pro:equivalence}, not only are the optimal objective values of the \ref{eqn:fluid} and problem (\ref{eqn:path_fluid}) equal, we can construct an optimal solution to one problem by using the other.

\vspace{-2mm}

\section{Proof of Proposition \ref{pro:extreme}}
\label{sec:extreme}

\vspace{-2mm}

The proof uses a sequence of lemmas. 
Using the vectors $\xvec = (x_1,\ldots,x_n)$ and $\zvec = (z_1,\ldots,z_m)$, for constants $b \geq 0$, $f_i \geq 0$ and $a_{ij} \geq 0$ for all $i=1,\ldots,n$, $j=1,\ldots,m$, consider the polytope
\begin{align}
\Pcal = \Bigg\{ (\xvec,\zvec) \in \mathbb R_+^{n+m} ~:~ \sum_{i=1}^n f_i \ts x_i = b,~~ x_i \leq \sum_{j=1}^m a_{ij} \ts z_j ~\forall \ts i=1,\ldots,n,~~\sum_{j=1}^m z_j = 1 \Bigg\}.
\label{eqn:poly}
\end{align}
We establish the following sequence of results. If $(\xvecbar,\zvecbar)$ is an extreme point of the polytope $\Pcal$ with $\sum_{j=1}^m a_{ij} \ts \zbar_j > 0$ for all $i=1,\ldots,n$, then there are at most two strictly positive components of the vector $\zvecbar$. This result is the key. Leveraging this result, it turns out that if $(\xvecbar,\zvecbar)$ is an extreme point of the polytope $\Pcal$, then there can be at most two strictly positive components of the vector~$\zvecbar$. In other words, we do not have to impose the condition that $\sum_{j=1}^m a_{ij} \ts z_j > 0$ for all $i = 1,\ldots,n$. Once we establish these results for the generic polytope $\Pcal$, we turn to the extreme point solutions to problem (\ref{eqn:path_fluid}). We use the additional decision variables \mbox{$(w_j : j \in \Jcal)$} to express the first constraint in (\ref{eqn:path_fluid})  as $\sum_{j \in \Jcal} a_{ij} \ts w_j \leq c_i$ for all $i \in \Lcal$ and $\sum_{t \in \Tcal} \sum_{\ell \in \Ncal} \lambda_{jt}^\ell \ts x_{jt}^\ell = w_j$ for all $j \in \Jcal$. If we fix the values of the decision variables $(w_j : j \in \Jcal)$, then the set of feasible solutions for problem (\ref{eqn:path_fluid}) decomposes by the products. Furthermore, the set of feasible solutions corresponding to each product has the same structure as the polytope $\Pcal$, in which case, the result in Proposition \ref{pro:extreme} will follow. 

\vspace{-2mm}

\begin{lem}[Polytope]
\label{lem:nonzero_extreme}
If $(\xvecbar,\zvecbar)$ is an extreme point of the polytope $\Pcal$ with $\sum_{j=1}^m a_{ij} \ts \zbar_j > 0$ for all $i=1,\ldots,n$, then there exist $\Fs, \Ss \in \{1,\ldots,m\}$ such that $\zbar_j = 0$ for all $j \in \{1,\ldots,m\} \setminus \{\Fs, \Ss\}$. 
\end{lem}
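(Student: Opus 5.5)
We argue by contradiction through a perturbation. Let $(\xvecbar,\zvecbar)$ be an extreme point of $\Pcal$ with $\sum_{j=1}^m a_{ij}\ts\zbar_j>0$ for all $i$. Suppose the support $\Scal=\{j:\zbar_j>0\}$ has at least three elements. We will build a nonzero direction $(\dvec^x,\dvec^z)$ such that $(\xvecbar,\zvecbar)\pm\epsilon(\dvec^x,\dvec^z)\in\Pcal$ for all small enough $\epsilon>0$. This contradicts extremality. (Here $\Scal$, $\Tcal'$, $\dvec^x$, $\dvec^z$ are local notation introduced only for this sketch.)

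\emph{Step 1 (tight and slack indices).} Partition the indices $i=1,\ldots,n$ into two groups:
\begin{align*}
\Tcal' &= \Big\{i : \xbar_i=\sum_{j=1}^m a_{ij}\ts\zbar_j\Big\}, \qquad \text{the indices where the coupling constraint is tight,}\\
\{1,\ldots,n\}\setminus\Tcal' &= \Big\{i : \xbar_i<\sum_{j=1}^m a_{ij}\ts\zbar_j\Big\}, \qquad \text{the slack indices.}
\end{align*}
By the positivity hypothesis, every $i\in\Tcal'$ has $\xbar_i>0$. This is exactly where the hypothesis is used: it ensures the nonnegativity constraint $x_i\ge 0$ and the coupling constraint are never both binding for the same $i$.

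\emph{Step 2 (choosing the direction).} Take $\dvec^z$ supported on $\Scal$, and define
\begin{align*}
d^x_i = \sum_{j\in\Scal} a_{ij}\ts d^z_j \quad \text{for } i\in\Tcal', \qquad d^x_i=0 \quad \text{for } i\notin\Tcal'.
\end{align*}
We then need the remaining equality constraints to be preserved, which gives two homogeneous linear equations in the unknowns $(d^z_j:j\in\Scal)$:
\begin{align*}
\sum_{j\in\Scal} d^z_j=0, \qquad \sum_{j\in\Scal}\Big(\sum_{i\in\Tcal'} f_i\ts a_{ij}\Big)\ts d^z_j=0.
\end{align*}
Since $|\Scal|\ge 3$, this system has a nonzero solution $\dvec^z$. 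The direction is then nonzero.

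\emph{Step 3 (feasibility of both perturbations).} Check each constraint for $(\xvecbar,\zvecbar)\pm\epsilon(\dvec^x,\dvec^z)$ with small $\epsilon$:
\begin{itemize}
\item The constraint $\sum_i f_i x_i=b$ and the constraint $\sum_j z_j=1$ are preserved by Step 2.
\item For $i\in\Tcal'$, the coupling constraint stays tight by the definition of $d^x_i$, and $x_i$ stays positive because $\xbar_i>0$.
\item For $i\notin\Tcal'$, $x_i$ does not change, so $x_i\ge 0$ still holds. The coupling constraint had positive slack, so it survives a small change in $\zvec$.
\item The components $z_j$ with $j\in\Scal$ are strictly positive, so they stay nonnegative. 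The components outside $\Scal$ do not move.
\end{itemize}
Hence both perturbed points lie in $\Pcal$, so $(\xvecbar,\zvecbar)$ is their midpoint and not an extreme point. This contradiction shows $|\Scal|\le 2$, and we let $\Fs,\Ss$ be the indices in $\Scal$ (padding arbitrarily if $|\Scal|<2$).

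The only delicate step is Step 1. One must make sure that no index has both $x_i\ge 0$ and the coupling constraint active, because then the direction could not move $x_i$ in both signs. The hypothesis $\sum_j a_{ij}\ts\zbar_j>0$ rules this out, which is presumably why the lemma assumes it before the general case is treated.
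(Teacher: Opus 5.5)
Your proof is correct, but it takes a different route from the paper's. The paper rewrites each coupling inequality as $x_i + u_i = \sum_{j=1}^m a_{ij} z_j$ with a slack $u_i \ge 0$. The resulting standard-form system has only $n+2$ equality rows, so an extreme point has at most $n+2$ strictly positive entries among $(\xvecbar,\zvecbar,\uvecbar)$. The hypothesis $\sum_j a_{ij}\zbar_j>0$ then forces at least one of $\xbar_i,\ubar_i$ to be positive in each of the $n$ coupling rows, which leaves at most two positive entries for $\zvecbar$.

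You instead build an explicit two-sided perturbation whenever the support $S=\{j:\zbar_j>0\}$ has three or more elements. In effect, this reproves, for this particular polytope, the basic-feasible-solution fact that the paper invokes. The paper's argument is shorter once standard LP theory is assumed; yours is self-contained.

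Your argument is also stronger than you claim. Suppose $\sum_j a_{ij}\zbar_j = 0$ for some $i$. Since $a_{ij}\ge 0$ and $\zbar_j>0$ on $S$, this forces $a_{ij}=0$ for every $j\in S$. Your definition then gives $d^x_i=0$ automatically, so $x_i=0$ never has to move. Your closing remark, that the hypothesis is needed to rule out an index where both $x_i\ge 0$ and the coupling constraint bind, is therefore overstated. The same perturbation proves Lemma~\ref{lem:zero_extreme} directly, without the paper's detour through the reduced polytope $\overline \Pcal$.
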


\vspace{-2mm}

\noindent{\it Proof:} Associating the slack decision variables $\uvec = (u_1,\ldots,u_n)$, we write the second constraint in the polytope $\Pcal$ as the equality constraint $x_i + u_i = \sum_{j=1}^m a_{ij} \ts z_j$.  Once we write the polytope $\Pcal$ with such an equality constraint,  we use $(\xvecbar,\zvecbar,\uvecbar)$ with $\ubar_i = \sum_{j=1}^m a_{ij} \ts \zbar_j - \xbar_i$ to denote the extreme point in the statement of the lemma. There are $n+2$ equality constraints and $2n + m$ decision variables in the definition of the polytope $\Pcal$ with the equality constraints, so there can be at most $n+2$ strictly positive components of the vector $(\xvecbar,\zvecbar,\uvecbar)$. We count the number of strictly positive components of the vector $(\xvecbar,\zvecbar,\uvecbar)$. Because $\sum_{j=1}^m a_{ij} \ts \zbar_j > 0$, to satisfy the constraint $\xbar_i + \ubar_i = \sum_{j=1}^m a_{ij} \ts \zbar_j$, at least one of the decision variables $\ubar_i$ and $\xbar_i$ has to be strictly positive. The number of such constraints in the definition of the polytope $\Pcal$ is $n$. Therefore, considering the vector $(\xvecbar,\uvecbar)$, at least $n$ of the components of this vector have to be strictly positive. In this case, noting that there can be at most $n+2$ strictly positive components of the vector $(\xvecbar,\zvecbar,\uvecbar)$, it follows that at most two of the components of the vector $\zvecbar$ can be strictly positive. \qed

The lemma above is the key result. We argue that the result in Lemma \ref{lem:nonzero_extreme} holds even if we do not impose the condition that $\sum_{j=1}^m a_{ij} \ts \zbar_j > 0$ for all $i=1,\ldots,n$. In particular, if $(\xvecbar,\zvecbar)$ is an extreme point of the polytope $\Pcal$, then there are at most two strictly positive components of the vector~$\zvecbar$, irrespective of the values of  $\sum_{j=1}^m a_{ij} \ts \zbar_j$ for $i=1,\ldots,n$. To see this result, we define \mbox{$\overline \Qcal = \{ i = 1,\ldots,n : \sum_{j=1}^m a_{ij} \ts \zbar_j > 0 \}$}. %, in which case, by the second constraint in (\ref{eqn:poly}), we have $\xbar_i = 0$ for $i \in \{1,\ldots,n\} \setminus \overline \Qcal$.
Using the vector $\yvec = (y_i : i \in \overline \Qcal)$, we consider the polytope given by $\overline \Pcal = \{ (\yvec,\zvec) \in \mathbb R_+^{|\overline \Qcal|+m}: \sum_{i \in \overline \Qcal} f_i \ts y_i = b,~ y_i \leq \sum_{j=1}^m a_{ij} \ts z_j ~\forall \ts i \in \overline \Qcal,~\sum_{j=1}^m z_j = 1\}$.~We can check that if $(\xvecbar,\zvecbar)$ is an extreme point of the polytope $\Pcal$, then $(\yvecbar,\zvecbar)$ with $\ybar_i = \xbar_i$ for all $i \in \overline \Qcal$ is an extreme point of the polytope $\overline \Pcal$. Also, we have $\sum_{j=1}^m a_{ij} \ts \zbar_j > 0$ for all $i \in \overline \Qcal$. Thus, we can use Lemma~\ref{lem:nonzero_extreme} for the extreme point $(\yvecbar,\zvecbar)$ of the polytope $\overline \Pcal$ to conclude that there are at most two strictly positive components of the vector $\zvecbar$, as desired. We give this result in the next lemma. 

\vspace{-2mm}

\begin{lem}[Generalized Polytope]
\label{lem:zero_extreme}
If $(\xvecbar,\zvecbar)$ is an extreme point of the polytope $\Pcal$, then there exist $\Fs, \Ss \in \{1,\ldots,m\}$ such that $\zbar_j = 0$ for all $j \in \{1,\ldots,m\} \setminus \{\Fs, \Ss\}$. 
\end{lem}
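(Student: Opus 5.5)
We reduce to Lemma~\ref{lem:nonzero_extreme} by discarding the indices $i$ whose right-hand side vanishes at the extreme point, following the sketch just before the statement. Let $(\xvecbar,\zvecbar)$ be an extreme point of $\Pcal$ and set $\overline \Qcal = \{ i : \sum_{j=1}^m a_{ij} \ts \zbar_j > 0\}$. For $i \notin \overline \Qcal$, the constraints $0 \leq \xbar_i \leq \sum_{j} a_{ij} \ts \zbar_j = 0$ force $\xbar_i = 0$. Define $\overline \Pcal$ as in the text and set $\ybar_i = \xbar_i$ for $i \in \overline \Qcal$.

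First we check that $(\yvecbar,\zvecbar) \in \overline \Pcal$. The equality $\sum_{i \in \overline \Qcal} f_i \ts \ybar_i = b$ holds because the dropped terms have $\xbar_i = 0$. The remaining constraints of $\overline \Pcal$ are a subset of those of $\Pcal$, so they hold as well.

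The main step is to show that $(\yvecbar,\zvecbar)$ is an extreme point of $\overline \Pcal$. Suppose instead that $(\yvecbar,\zvecbar) = \frac12 (\yvec^1,\zvec^1) + \frac12 (\yvec^2,\zvec^2)$ with distinct points in $\overline \Pcal$. Extend each $\yvec^k$ to $\xvec^k$ by setting $x^k_i = 0$ for $i \notin \overline \Qcal$. Then $(\xvec^k,\zvec^k) \in \Pcal$:
\begin{itemize}
\item the knapsack equality and the normalization $\sum_j z^k_j = 1$ carry over directly;
\item for $i \in \overline \Qcal$, the constraint $x^k_i \leq \sum_j a_{ij} \ts z^k_j$ is inherited from $\overline \Pcal$;
\item for $i \notin \overline \Qcal$, we need $0 \leq \sum_j a_{ij} \ts z^k_j$, which holds because $a_{ij} \geq 0$ and $z^k_j \geq 0$.
\end{itemize}
The two extended points are distinct, and their average is $(\xvecbar,\zvecbar)$ because $\xbar_i = 0$ for $i \notin \overline \Qcal$. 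This contradicts the extremality of $(\xvecbar,\zvecbar)$ in $\Pcal$. The only delicate point is the last bullet, and nonnegativity of $a_{ij}$ and $\zvec$ is exactly what makes it work.

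Finally, $(\yvecbar,\zvecbar)$ is an extreme point of $\overline \Pcal$ with $\sum_j a_{ij} \ts \zbar_j > 0$ for every $i \in \overline \Qcal$. Lemma~\ref{lem:nonzero_extreme}, applied to $\overline \Pcal$ (same form, with index set $\overline \Qcal$), shows that $\zvecbar$ has at most two strictly positive components. If $\overline \Qcal$ is empty, the same conclusion follows from a direct count. The system then has only the two equalities $0 = b$ (which must hold) and $\sum_j z_j = 1$ in the variables $\zvec$. An extreme point of this set is a vertex of the simplex and so has exactly one positive component.
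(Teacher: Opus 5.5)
Your proposal is correct and follows essentially the same route as the paper: you restrict to $\overline \Qcal$, show that $(\yvecbar,\zvecbar)$ is an extreme point of $\overline \Pcal$ by padding any decomposition with zeros (using $a_{ij} \geq 0$ for the dropped constraints) to contradict extremality in $\Pcal$, and then apply Lemma~\ref{lem:nonzero_extreme}. Your use of the midpoint characterization instead of a general $\alpha \in (0,1)$, your explicit membership check, and your separate treatment of $\overline \Qcal = \varnothing$ are harmless refinements of the same argument.
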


\vspace{-2mm}

\noindent{\it Proof:} Defining the set $\overline \Qcal$ and the polytope $\overline \Pcal$ as in the discussion just before the lemma, if we can verify that $(\yvecbar,\zvecbar)$ with $\ybar_i = \xbar_i$ for all $i \in \overline \Qcal$ is an extreme point of the polytope $\overline \Pcal$, then the result follows by the discussion just before the lemma. To get a contradiction, assume that $(\yvecbar,\zvecbar)$ is not an extreme point for the polytope $\overline \Pcal$, so there exist $(\yvechat,\zvechat), (\yvectilde,\zvectilde) \in \overline \Pcal$ with $\alpha \in (0,1)$ such that $(\yvecbar,\zvecbar) = \alpha \ts (\yvechat,\zvechat) + (1-\alpha) \ts (\yvectilde,\zvectilde)$. We define $\xvechat$ as $\xhat_i = \yhat_i$ for $i \in \overline \Qcal$ and $\xhat_i = 0$ for $i \not \in \overline \Qcal$. Using the fact that $(\yvechat,\zvechat) \in \overline \Pcal$, as well as $a_{ij} \geq 0$ so that $\sum_{j=1}^m a_{ij} \ts \zhat_j \geq 0$, we can check that $(\xvechat,\zvechat) \in \Pcal$. Similarly, we define $\xvectilde$ as $\xtilde_i = \ytilde_i$ for $i \in \overline \Qcal$ and $\xtilde_i = 0$ for $i \not \in \overline \Qcal$. By the same argument, we have $(\xvectilde,\zvectilde) \in \Pcal$. We have $\sum_{j=1}^m a_{ij} \ts \zbar_j = 0$ for $i \not \in \overline \Qcal$, so by the second constraint in (\ref{eqn:poly}), we get $\xbar_i = 0$ for $i \not \in \overline \Qcal$. In this case, we can check that having $(\yvecbar,\zvecbar) = \alpha \ts (\yvechat,\zvechat) + (1-\alpha) \ts (\yvectilde,\zvectilde)$ implies that $(\xvecbar,\zvecbar) = \alpha \ts (\xvechat,\zvechat) + (1-\alpha) \ts (\xvectilde,\zvectilde)$, contradicting the fact that $(\xvecbar,\zvecbar)$ is an extreme point of the polytope $\Pcal$. \qed

Our discussion so far focuses on the extreme points of the generic polytope $\Pcal$. We focus on the extreme point solutions to problem (\ref{eqn:path_fluid}). In problem (\ref{eqn:path_fluid}), we interpret $\sum_{t \in \Tcal} \sum_{\ell \in \Ncal} \lambda_{jt}^\ell \ts x_{jt}^\ell$ as the total expected sales for product $j$. Setting the total expected sales for product $j$ as $w_j$ for all $j \in \Jcal$, for fixed vector $\wvec = (w_j : j \in \Jcal)$, we define the polytope
\begin{align}
\Ycal(\wvec) ~=~ \Bigg\{& (\xvec,\zvec) \in \mathbb R_+^{|\Jcal| \ts (n \times T + |\Mcal|)} ~:~ 
 \sum_{t \in \Tcal}  \sum_{\ell \in \Ncal} \lambda_{jt}^\ell \ts x_{jt}^\ell = w_j \quad \forall \ts j \in \Jcal,
\nonumber
\\
&\qquad \qquad x_{jt}^\ell \leq  \sum_{q\in \Mcal} \ind{\pbar_{jt}^q = \ell} \ts z_j^q \quad \forall \ts j \in \Jcal,~\ell \in \Ncal,~t \in \Tcal, \quad
\sum_{q \in \mathcal M} z_j^q = 1 \quad \forall \ts j \in \Jcal \Bigg\}.
\label{eqn:dec_feasible}
\end{align}
In this case, we can express the set of feasible solutions to problem (\ref{eqn:path_fluid}) equivalently as the polytope $\Xcal = \{(\xvec,\wvec,\zvec) \in \mathbb R_+^{|\Jcal| \ts (n \times T + 1+|\Mcal|)}: \sum_{j \in \Jcal} a_{ij} \ts w_j \leq c_i~\forall \ts i \in \Lcal,~(\xvec,\zvec) \in \Ycal(\wvec) \}$. In particular, the first constraint in the definition of $\Ycal(\wvec)$, along with the constraint \mbox{$\sum_{j \in \Jcal} a_{ij} \ts w_j \leq c_i$} for all $i \in \Lcal$ in the definition of $\Xcal$, capture the first constraint in (\ref{eqn:path_fluid}), whereas the second and third constraints in the definition of $\Ycal(\wvec)$ capture the second, third and fourth constraints in (\ref{eqn:path_fluid}). Representing the set of feasible solutions to problem (\ref{eqn:path_fluid}) as the polytope $\Xcal$, the proof of Proposition \ref{pro:extreme} follows from the following two observations. First, we can check that if $(\xvecbar,\wvecbar,\zvecbar)$ is an extreme point of the polytope $\Xcal$, then $(\xvecbar,\zvecbar)$ is an extreme point of the polytope $\Ycal(\wvecbar)$. We can show this result by using a contradiction argument similar to the one in the proof of Lemma \ref{lem:zero_extreme}. We assume that $(\xvecbar,\wvecbar,\zvecbar)$ is an extreme point of $\Xcal$, but $(\xvecbar,\zvecbar$) is not an extreme point of the polytope $\Ycal(\wvecbar)$, in which case, we can express $(\xvecbar,\wvecbar,\zvecbar)$ as a non-trivial convex combination of two points in $\Xcal$. Because the outline is similar to the one in the proof of Lemma \ref{lem:zero_extreme}, we skip it. 
Second, the set of feasible solutions given by the polytope $\Ycal(\wvec)$ decomposes by the products. Fixing product $j$, identifying the decision variables $(x_{jt}^\ell : \ell \in \Ncal,~t \in \Tcal)$ and $(z_j^q : q \in \Mcal)$ in (\ref{eqn:dec_feasible}), respectively, with the decision variables \mbox{$(x_i : i =1,\ldots,n)$ and $(z_j : j =1,\ldots,m)$} in (\ref{eqn:poly}), as well as identifying the constants \mbox{$(\lambda_{jt}^\ell : \ell \in \Ncal,~t \in \Tcal)$} and \mbox{$(\ind{\pbar_{jt}^q = \ell} : \ell \in \Ncal,~t \in \Tcal,~q \in \Mcal)$} in (\ref{eqn:dec_feasible}) with the constants $(f_i : i =1,\ldots,n)$ and $(a_{ij} : i =1,\ldots,n,~j=1,\ldots,m)$ in (\ref{eqn:poly}), the polytope in (\ref{eqn:dec_feasible}) has the same structure as the polytope in (\ref{eqn:poly}). Thus, for any fixed $\wvecbar$, by Lemma \ref{lem:zero_extreme}, if $(\xvecbar,\zvecbar)$ is an extreme point of $\Ycal(\wvecbar)$, then there can be at most two strictly positive components of the vector $(\zbar_j^q : q \in \Mcal)$. We put these observations together to give an explicit proof for Proposition \ref{pro:extreme}.
%Putting the two observations in the previous paragraph together establishes Proposition \ref{pro:extreme}. We recap the main steps to give an explicit proof for Proposition \ref{pro:extreme}.

%We put these observations together to give a proof for Proposition \ref{pro:extreme}.

{\bf \underline{Proof of Proposition \ref{pro:extreme}}:}
\\
\indent The polytope $\Xcal$ defined just after (\ref{eqn:dec_feasible}) is equivalent to the set of feasible solutions for problem (\ref{eqn:path_fluid}).~In this case, by the first observation just after (\ref{eqn:dec_feasible}), if $(\xvecbar,\wvecbar,\zvecbar)$ is an extreme point of the polytope $\Xcal$, then $(\xvecbar,\zvecbar)$ is an extreme point of the polytope $\Ycal(\wvecbar)$. Furthermore, by the second observation just after (\ref{eqn:dec_feasible}), considering the extreme point $(\xvecbar,\zvecbar)$ for the polytope $\Ycal(\wvecbar)$, for each $j \in \Jcal$, there are at most two strictly positive components of the vector $(\zbar_j^q : q \in \Mcal)$. \qed

The polytope in (\ref{eqn:poly}) is similar to the knapsack polytope, but the upper bound on the decision variable $x_i$ is determined by a convex combination of the parameters $(a_{ij} : j = 1,\ldots,m)$.

\section{Proof of Theorem \ref{thm:perf}}
\label{sec:perf}

We give a proof for Theorem \ref{thm:perf}. The first inequality in the theorem follows by noting that \mbox{$Z_\lp^* \geq \opt$}, so we focus on showing the second inequality. We define the Bernoulli random variable $\Grm_{jt}$ such that $\Grm_{jt} = 1$ if and only if we have remaining resource capacities to make product $j$ available at time period $t$ under the approximate policy. Letting $\betabar_{jt} = \thetabar_{jt} \ts \Cbar_j / \max\{ \capa_j^\Fs , \Cbar_j\}$ for notational brevity, by the definition of the approximate policy, if we have the remaining resource capacities to make product~$j$ available for purchase at time period~$t$, then we make the product available for purchase with probability $\gamma \ts \betabar_{jt}$. For the approximate policy to collect revenue from product $j$ at time period $t$, we need to have remaining capacities for the resources to make the product available for purchase, we need to make the product available and we need to make a sale for the product at the price $\pbar_{jt}^\Fs$ chosen by the approximate policy. Therefore, the expected revenue of the approximate policy is given by \mbox{$\apx = \sum_{t \in \Tcal} \sum_{j \in \Jcal} \mathbb P \{ \Grm_{jt} = 1\} \ts \gamma \ts \betabar_{jt} \sum_{\ell \in \Ncal} \ind{\pbar_{jt}^\Fs = \ell} \ts \lambda_{jt}^\ell \ts r_j^\ell$}.~We proceed to lower bounding the total expected revenue of the approximate policy with a certain fraction of the optimal objective value of the \ref{eqn:fluid}.

Noting that $\mathbb P \{ \Grm_{jt} = 1\}$ is the probability that we have remaining resource capacities to make product $j$ available for purchase at time period $t$, for some function $\avail : [0,1] \rightarrow [0,1]$, we show that if we can lower bound this availability probability as $\mathbb P \{ \Grm_{jt} = 1\} \geq \avail(\gamma)$ as a function of the tuning parameter, then we can lower bound the total expected revenue of the approximate policy as $\apx \geq \frac 12 \ts \gamma \ts \avail(\gamma) \ts Z_\lp^*$.
We give a useful inequality to show this result. We have $\zbar_j^q = 0$ for all $q \in \Mcal \setminus \{ \Fs, \Ss\}$ by Proposition \ref{pro:extreme}. By the discussion at the beginning of Section~\ref{sec:policy}, we have the identity $\Cbar_j = \sum_{q \in \Mcal} \zbar_j^q \ts \capa_j^q$, so we obtain $\Cbar_j = \zbar_j^\Fs \ts \capa_j^\Fs +  \zbar_j^\Ss \ts \capa_j^\Ss$.~In this case, the last equality yields $\Cbar_j \geq \zbar_j^\Fs \ts \capa_j^\Fs$, which is equivalent to $\zbar_j^\Fs \leq \Cbar_j / \capa_j^\Fs$, yielding an upper bound on the probability of one of the price paths. Using the fact that $\zbar_j^\Fs \in [0,1]$, we write the last inequality as $\zbar_j^\Fs \leq \min\{ \Cbar_j / \capa_j^\Fs , 1\} = \Cbar_j / \max\{ \Cbar_j , \capa_j^\Fs\}$. Using the same argument, we also obtain the inequality $\zbar_j^\Ss \leq \Cbar_j / \max\{ \Cbar_j , \capa_j^\Ss\}$. 
In the next lemma, we build on these upper bounds on the probabilities $\zbar_j^\Fs$ and $\zbar_j^\Ss$ to show that we can lower bound the total expected revenue of the approximate policy by using a lower bound on the availability probability. 

\vspace{-1mm}

\begin{lem}[Lower Bound on Policy Performance]
\label{lem:apx_lp}
For some $\avail : [0,1] \rightarrow [0,1]$, if the availability probability satisfies $\mathbb P \{ \Grm_{jt} = 1\} \geq \avail(\gamma)$ for all $j \in \Jcal$ and $t \in \Tcal$, then we have
\begin{align*}
\apx \geq \frac 12 \ts \gamma \ts \avail(\gamma) \ts Z_\lp^*.
\end{align*}
\end{lem}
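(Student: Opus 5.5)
Starting from the expression for the total expected revenue derived just before the lemma, I would substitute the assumed bound $\mathbb P\{\Grm_{jt}=1\}\geq \avail(\gamma)$ term by term. This gives
\[
\apx \;\geq\; \gamma \ts \avail(\gamma) \sum_{j \in \Jcal} \frac{\Cbar_j}{\max\{\capa_j^\Fs,\Cbar_j\}} \sum_{t \in \Tcal} \thetabar_{jt} \sum_{\ell \in \Ncal} \ind{\pbar_{jt}^\Fs = \ell}\ts \lambda_{jt}^\ell \ts r_j^\ell .
\]
By the definition of $\thetabar_{jt}$ and of $\reve_j^q$ in (\ref{eqn:path_rev}), the inner double sum over $t$ and $\ell$ is exactly $\reve_j^\Fs$. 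So the lower bound becomes $\gamma\ts\avail(\gamma)\sum_{j\in\Jcal} \Cbar_j\ts \reve_j^\Fs/\max\{\capa_j^\Fs,\Cbar_j\}$. Since $Z_\lp^* = \sum_{j\in\Jcal}\Rbar_j$, it then suffices to prove, product by product, that
\[
\Rbar_j \;\leq\; 2\ts \Cbar_j\ts \reve_j^\Fs/\max\{\capa_j^\Fs,\Cbar_j\}.
\]

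For this per-product inequality I would use $\Rbar_j = \zbar_j^\Fs \reve_j^\Fs + \zbar_j^\Ss \reve_j^\Ss$, which follows from Proposition \ref{pro:extreme} and the identity after (\ref{eqn:path_rev}). I bound the two terms separately:
\begin{itemize}
\item For the first term, the inequality $\zbar_j^\Fs \leq \Cbar_j/\max\{\Cbar_j,\capa_j^\Fs\}$ established just before the lemma gives $\zbar_j^\Fs\reve_j^\Fs \leq \Cbar_j\reve_j^\Fs/\max\{\capa_j^\Fs,\Cbar_j\}$.
\item For the second term, $\zbar_j^\Ss \leq \Cbar_j/\max\{\Cbar_j,\capa_j^\Ss\}$ gives $\zbar_j^\Ss \reve_j^\Ss \leq \Cbar_j\ts \reve_j^\Ss/\max\{\capa_j^\Ss,\Cbar_j\}$. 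The indexing rule of the policy says $\reve_j^\Ss/\max\{\capa_j^\Ss,\Cbar_j\} \leq \reve_j^\Fs/\max\{\capa_j^\Fs,\Cbar_j\}$, so this is again at most $\Cbar_j\reve_j^\Fs/\max\{\capa_j^\Fs,\Cbar_j\}$.
\end{itemize}
Adding the two bounds gives the factor of two. Summing over $j$ then yields $\apx \geq \frac12\gamma\ts\avail(\gamma)\ts Z_\lp^*$.

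The main obstacle is degenerate cases, where the ratios or the quantity $\xbar_{jt}^\ell/\ybar_{jt}^\ell$ are undefined. If $\Cbar_j = 0$, then every $\lambda_{jt}^\ell\xbar_{jt}^\ell = 0$. Since $r_j^\ell\geq 0$ (prices), this gives $\Rbar_j = 0$ and the per-product inequality is trivial; the convention $0/0$ in the indexing rule is irrelevant there. When $\ybar_{jt}^\ell = 0$, I would set the ratio to zero. This is harmless because $\xbar_{jt}^\ell \leq \ybar_{jt}^\ell$ forces $\xbar_{jt}^\ell = 0$, and in any case a path with positive $\zbar_j^q$ never uses such a pair $(t,\ell)$. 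If only one path has positive weight, the argument goes through with $\zbar_j^\Ss = 0$.
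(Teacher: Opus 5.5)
Your proposal is correct and follows essentially the same route as the paper. You substitute the availability bound, recognize the per-product sum as $\reve_j^\Fs$, and derive $\Rbar_j \leq 2\ts\Cbar_j\ts\reve_j^\Fs/\max\{\capa_j^\Fs,\Cbar_j\}$ from the bounds on $\zbar_j^\Fs,\zbar_j^\Ss$ together with the indexing rule. Your treatment of the degenerate cases ($\Cbar_j = 0$, $\ybar_{jt}^\ell = 0$) is a careful addition that the paper leaves implicit.
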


\vspace{-1mm}

\noindent{\it Proof:} By the discussion at the beginning of Section \ref{sec:policy}, we have $\Rbar_j = \zbar_j^\Fs \ts \reve_j^\Fs + \zbar_j^\Ss \ts \reve_j^\Ss$. Thus, the inequalities just before the lemma yield $\Rbar_j \leq \Cbar_j \ts ( \reve_j^\Fs / \max\{\Cbar_j , \capa_j^\Fs\} + \reve_j^\Ss / \max\{\Cbar_j , \capa_j^\Ss\})$.~We index the price paths $\Fs, \Ss$ such that $\reve_j^\Fs / \max\{ \capa_j^\Fs , \Cbar_j\}  \geq \reve_j^\Ss / \max\{ \capa_j^\Ss , \Cbar_j\}$, so we obtain \mbox{$\Rbar_j \leq 2 \ts \Cbar_j \reve_j^\Fs /  \max\{ \Cbar_j , \capa_j^\Fs\}$}. The expression for $\apx$ at the beginning of this section yields
\begin{align}
\apx 
~& =~ 
\sum_{t \in \Tcal} \sum_{j \in \Jcal} \mathbb P \{ \Grm_{jt} = 1\} \ts \gamma \ts \betabar_{jt} \sum_{\ell \in \Ncal} \ind{\pbar_{jt}^\Fs = \ell} \ts \lambda_{jt}^\ell \ts r_j^\ell
~\geq~
\gamma \ts \avail(\gamma) \sum_{t \in \Tcal} \sum_{j \in \Jcal} \betabar_{jt} \sum_{\ell \in \Ncal} \ind{\pbar_{jt}^\Fs = \ell} \ts \lambda_{jt}^\ell \ts r_j^\ell
\nonumber
\\
~&\stackrel{(a)} =~
\gamma \ts \avail(\gamma) \sum_{j \in \Jcal} \frac{\Cbar_j}{\max\{ \capa_j^\Fs , \Cbar_j\}} \sum_{t \in \Tcal}  \sum_{k \in \Ncal}  \ts \ind{\pbar_{jt}^\Fs = k} \ts  \frac{\xbar_{jt}^k}{\ybar_{jt}^k}  \sum_{\ell \in \Ncal} \ind{\pbar_{jt}^\Fs = \ell} \ts \lambda_{jt}^\ell \ts r_j^\ell
\nonumber
\\
~&\stackrel{(b)}=~
\gamma \ts \avail(\gamma) \sum_{j \in \Jcal} \frac{\Cbar_j}{\max\{ \capa_j^\Fs , \Cbar_j\}} \sum_{t \in \Tcal} \sum_{\ell \in \Ncal}  \ind{\pbar_{jt}^\Fs = \ell} \ts \lambda_{jt}^\ell \ts r_j^\ell \ts  \frac{\xbar_{jt}^\ell}{\ybar_{jt}^\ell}
~\stackrel{(c)}=~
\gamma \ts \avail(\gamma) \ts \sum_{j \in \Jcal} \frac{\Cbar_j}{\max\{ \capa_j^\Fs , \Cbar_j\}} \ts \reve_j^\Fs
\nonumber
\\
~&\geq~ 
\frac{1}{2} \gamma \ts \avail(\gamma) \ts \sum_{j \in \Jcal} \Rbar_j 
~\stackrel{(d)}=~ 
\frac{1}{2} \ts \gamma \ts \avail(\gamma) \ts Z_\lp^*,
\label{eqn:rev_lb}
\end{align}
where $(a)$ is by the definition of $\betabar_{jt}$, $(b)$ holds because  $\ind{\pbar_{jt}^\Fs = k}  \ts  \ind{\pbar_{jt}^\Fs = \ell} =1$ if and only if $k=\ell$, $(c)$ uses the definition of $\reve_j^\Fs$ and $(d)$ holds because we have $\sum_{j \in \Jcal} \Rbar_j = Z_\lp^*$ by the definition of $\Rbar_j$. \qed

If the availability probabilities are lower bounded by $\avail(\gamma)$, then we can lower bound $\apx / Z_\lp^*$ by $\frac 12 \ts \gamma \ts \avail(\gamma)$. We give a specific expression that lower bounds the availability probabilities.

{\bf \underline{Lower Bounding the Availability Probabilities}:}
\\
\indent We give an explicit expression for the function $\avail: [0,1] \rightarrow [0,1]$ such that \mbox{$\mathbb P \{ \Grm_{jt} = 1\} \geq \avail(\gamma)$}. We consider a  policy, which we refer to as the inventory agnostic policy, by following the setup for our approximate policy, but making a product available without checking whether we have remaining resource capacities. If a customer chooses to purchase a product without remaining resource capacities to satisfy the product request, then the customer leaves without a purchase. In particular, setting $\betabar_{jt} = \thetabar_{jt} \ts \Cbar_j / \max\{ \capa_j^\Fs , \Cbar_j\}$ with $\thetabar_{jt} = \sum_{\ell \in \Ncal} \ind{\pbar_{jt}^\Fs = \ell} \frac{\xbar_{jt}^\ell}{\ybar_{jt}^\ell}$, under the inventory agnostic policy, we always make product $j$ available at time period $t$ with probability $\gamma \ts \betabar_{jt}$ without checking the remaining resource capacities. If we make product $j$ available, then we charge the price $\pbar_{jt}^\Fs$ as in our approximate policy. If the customer chooses to purchase product $j$, then we collect the revenue from product $j$ corresponding to the price we charge only when we have   remaining capacities  to satisfy the product request. Under the inventory agnostic policy, for resource $i$ to have a demand for its capacity at time period $t$, we need to make a product that uses resource $i$ available for purchase and the customer needs to choose to purchase the product. Thus, defining the Bernoulli random variable $\Nrm_{it}$ with parameter $\sum_{j \in \Jcal} a_{ij} \ts \gamma \ts \betabar_{jt} \sum_{\ell \in \Ncal} \ind{\pbar_{jt}^\Fs=\ell} \ts \lambda_{jt}^\ell$, the demand for the capacity of resource $i$ at time period $t$ is given by the random variable $\Nrm_{it}$. Even if we have a demand for the capacity of resource $i$ at time period $t$, we may not consume the capacity of the resource because some other resource used by the requested product may not have  capacity. 

The key point is that the random variable $\Nrm_{it}$ upper bounds the capacity consumption of resource~$i$ at time period $t$ under our approximate policy. In particular, under the approximate policy, a product using resource $i$ consumes the capacity of the resource when all of the resources used by the product have remaining capacities. Under the inventory agnostic policy, however, a product using resource $i$ imposes a demand for the capacity of the resource regardless of the remaining capacity of other resources. Thus, having $\sum_{\tau = 1}^{t-1} \Nrm_{i\tau} < c_i$ is sufficient to have remaining capacity of resource $i$ at time period $t$ under our approximate policy. To have remaining resource capacities to make product $j$ available at time period $t$, we need to have remaining capacity for each resource used by product $j$. In this case, letting $\Acal_j = \{ i \in \Lcal: a_{ij} = 1\}$ to denote the set of resources used by product $j$, we have $\mathbb P \{ \Grm_{jt} =1 \} \geq \mathbb P \{ \sum_{\tau=1}^{t-1} \Nrm_{i\tau} < c_i~\forall \ts i \in \Acal_j\} \geq \mathbb P \{ \sum_{\tau \in \Tcal} \Nrm_{i\tau} < c_i~\forall \ts i \in \Acal_j\}$, which implies that a lower bound on the probability $\mathbb P \{ \sum_{\tau \in \Tcal} \Nrm_{i\tau} < c_i~\forall \ts i \in \Acal_j\}$ is also a lower bound on the probability $\mathbb P \{ \Grm_{jt} =1 \}$. We give a useful inequality to lower bound the former probability. The random variable $\Nrm_{it}$ is Bernoulli with parameter $\sum_{j \in \Jcal} a_{ij} \ts \gamma \ts \betabar_{jt} \sum_{\ell \in \Ncal} \ind{\pbar_{jt}^\Fs=\ell} \ts \lambda_{jt}^\ell$. We have 
\begin{align}
\!\!\sum_{t \in \Tcal} \betabar_{jt} \sum_{\ell \in \Ncal}  \ind{\pbar_{jt}^\Fs=\ell} \ts \lambda_{jt}^\ell
~&=~
\frac{\Cbar_j}{\max\{ \capa_j^\Fs , \Cbar_j\}} \sum_{t \in \Tcal}  \sum_{k \in \Ncal}  \ts \ind{\pbar_{jt}^\Fs = k} \ts  \frac{\xbar_{jt}^k}{\ybar_{jt}^k}  \sum_{\ell \in \Ncal} \ind{\pbar_{jt}^\Fs = \ell} \ts \lambda_{jt}^\ell
\nonumber
 \\
~&=~
\frac{\Cbar_j}{\max\{ \capa_j^\Fs , \Cbar_j\}} \sum_{t \in \Tcal}  \sum_{\ell \in \Ncal}      \ind{\pbar_{jt}^\Fs = \ell} \ts \lambda_{jt}^\ell \ts \frac{\xbar_{jt}^\ell}{\ybar_{jt}^\ell}
~=~
\frac{\Cbar_j}{\max\{ \capa_j^\Fs , \Cbar_j\}} \ts \capa_j^\Fs 
~\leq~\Cbar_j.\!\!\!\!
\label{eqn:avail_param}
\end{align}

In the chain of inequalities above, the first equality uses the definition of $\betabar_{jt}$, the second equality holds because  $\ind{\pbar_{jt}^\Fs = k}  \ts  \ind{\pbar_{jt}^\Fs = \ell} =1$ if and only if $k=\ell$ and the third equality uses the definition of $\capa_j^\Fs$ in  (\ref{eqn:path_rev}). Thus, setting $\alpha_{it} = \sum_{j \in \Jcal} a_{ij} \ts \gamma \ts \betabar_{jt} \sum_{\ell \in \Ncal} \ind{\pbar_{jt}^\Fs=\ell} \ts \lambda_{jt}^\ell$ to capture the parameter of the Bernoulli random variable $\Nrm_{it}$, by the chain of inequalities above, we get $\sum_{t \in \Tcal} \alpha_{it} \leq \gamma \ts \sum_{j \in \Jcal} a_{ij} \ts \Cbar_j \leq \gamma \ts  c_i$, where the last inequality follows from the definition of $\Cbar_j$ as discussed at the beginning of Section~\ref{sec:policy}. In this case, we get $\sum_{t \in \Tcal} \mathbb E\{ \Nrm_{it} \} = \sum_{t \in \Tcal} \alpha_{it} \leq \gamma \ts c_i$. Furthermore, because the inventory agnostic policy makes its product availability decisions without paying attention to the remaining resource capacities, the demands on the capacities of a resource at different time periods are independent. Therefore, we also get $\text{\sf Var}(\sum_{t \in \Tcal} \Nrm_{it}) = \sum_{t \in \Tcal} \alpha_{it} \ts (1-\alpha_{it}) \leq \sum_{t \in \Tcal} \alpha_{it} \leq \gamma \ts c_i$. In the next lemma, we use the upper bounds on the two moments of the random variable $\sum_{t \in \Tcal} \Nrm_{it}$ to give a lower bound on the availability probability. Recall that $L = \max_{j \in \Jcal} \sum_{i \in \Lcal} a_{ij}$ and $c_{\min} = \min_{i \in \Lcal} c_i$.    

\begin{lem}
[Lower Bound on Availability Probability] 
\label{lem:avail_prob}
For all $j \in \Jcal$ and $t \in \Tcal$, we can lower bound the availability probability for product $j$ at time period $t$ as
\begin{align*}
\mathbb P\{ \Grm_{jt} = 1\} ~\geq~ \max\Bigg\{ 1 - L \ts \gamma , 1 - L \exp\Bigg( - \frac{(1-\gamma)^2 \ts c_{\min}}{2}\Bigg) \Bigg\}.
\end{align*}
\end{lem}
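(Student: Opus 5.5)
The plan is to start from the reduction already established just before the statement:
\[
\mathbb P\{\Grm_{jt}=1\} \geq \mathbb P\Bigl\{\textstyle\sum_{\tau\in\Tcal}\Nrm_{i\tau} < c_i~\forall\, i\in\Acal_j\Bigr\}.
\]
Take complements and apply a union bound over the at most $L$ resources in $\Acal_j$. This gives
\[
\mathbb P\{\Grm_{jt}=1\} \geq 1-\sum_{i\in\Acal_j}\mathbb P\Bigl\{\textstyle\sum_{\tau\in\Tcal}\Nrm_{i\tau}\geq c_i\Bigr\}.
\]
It then suffices to bound each tail probability $\mathbb P\{\sum_{\tau}\Nrm_{i\tau}\ge c_i\}$ in two ways: once by $\gamma$, and once by $\exp(-(1-\gamma)^2 c_{\min}/2)$. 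Multiplying by $|\Acal_j|\le L$ yields the two expressions inside the maximum, and since both are valid lower bounds, so is their maximum.

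For the first bound, use Markov's inequality together with the moment bound $\sum_{\tau}\mathbb E\{\Nrm_{i\tau}\}=\sum_\tau\alpha_{i\tau}\le\gamma c_i$ derived from (\ref{eqn:avail_param}). This gives
\[
\mathbb P\Bigl\{\textstyle\sum_\tau \Nrm_{i\tau}\ge c_i\Bigr\}\le \gamma c_i/c_i=\gamma,
\]
and hence the bound $1-L\gamma$.

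For the second bound, use that under the inventory agnostic policy the $\Nrm_{i\tau}$ are independent Bernoulli random variables with parameters $\alpha_{i\tau}$ and total mean $\mu_i\le\gamma c_i$. Apply a multiplicative Chernoff bound. One option is to pad the sum with independent Bernoulli variables, or use monotonicity of the tail in the parameters, so that the mean can be taken to equal $\gamma c_i$ exactly; stochastic domination makes this legitimate, since increasing the parameters only increases the tail probability. Then write $c_i=(1+\delta)\gamma c_i$ with $\delta=(1-\gamma)/\gamma$ and use $\mathbb P\{S\ge(1+\delta)\mu\}\le \exp(-\delta^2\mu/(2+\delta))$. The exponent simplifies:
\[
\frac{\delta^2\mu}{2+\delta}=\frac{(1-\gamma)^2 c_i/\gamma}{(1+\gamma)/\gamma}=\frac{(1-\gamma)^2c_i}{1+\gamma}\ge\frac{(1-\gamma)^2 c_i}{2}.
\]
Alternatively, a Hoeffding/Bernstein-type bound with variance at most $\gamma c_i$ and deviation $(1-\gamma)c_i$ gives the same or a stronger exponent. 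Finally, use $c_i\ge c_{\min}$ to obtain $\exp(-(1-\gamma)^2c_{\min}/2)$.

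The main obstacle is choosing the concentration inequality so that the constant $1/2$ comes out cleanly without extra factors of $\gamma$. I would try the Chernoff form $\exp(-\delta^2\mu/(2+\delta))$ first, as sketched above, because it gives exactly $(1-\gamma)^2c_i/(1+\gamma)$. The only delicate point is justifying the reduction to mean exactly $\gamma c_i$, or checking that the bound is monotone in $\mu$ for $\mu\le\gamma c_i$. If this gets messy, I would instead use Bernstein's inequality with variance bound $\gamma c_i$ and increments bounded by one, which gives exponent $(1-\gamma)^2c_i^2/(2(\gamma c_i+(1-\gamma)c_i/3))\ge(1-\gamma)^2c_i/2$.
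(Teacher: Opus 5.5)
Your proposal is correct and follows the paper's proof: reduce to the inventory-agnostic demands, take a union bound over the at most $L$ resources in $\Acal_j$, use Markov's inequality with $\sum_{\tau}\mathbb E\{\Nrm_{i\tau}\}\le\gamma c_i$ for the term $1-L\gamma$, and use a concentration bound together with $c_i\ge c_{\min}$ for the exponential term. The paper uses your fallback directly, the one-sided Bernstein inequality with variance at most $\gamma c_i$ and $\gamma+\frac13(1-\gamma)\le 1$; this needs only upper bounds on the mean and variance, so it avoids your padding step, and note that plain Hoeffding would not work here because its exponent scales like $c_i^2/T$ rather than $c_i$.
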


\noindent{\it Proof:} By the discussion just before the lemma, we have $\sum_{t \in \Tcal} \mathbb E\{ \Nrm_{it} \} \leq \gamma \ts c_i$. Thus, by the Markov inequality, we get $\mathbb P\{ \sum_{t \in \Tcal} \Nrm_{it} \geq c_i\} \leq \sum_{t \in \Tcal} \mathbb E\{ \Nrm_{it} \} / c_i \leq \gamma$ for all $i \in \Lcal$. Noting that $|\Acal_j| \leq L$, using the union bound in the last inequality yields $\mathbb P \{ \sum_{t \in \Tcal} \Nrm_{it} \geq c_i \mbox{ for some $i \in \Acal_j$}\} \leq L \ts \gamma$, which is equivalent to $\mathbb P \{ \sum_{t \in \Tcal} \Nrm_{it} < c_i ~\forall \ts i \in \Acal_j\} \geq 1 - L \ts \gamma$. Also, we have the chain of inequalities %Using $\mathbb E\{ \Nrm_{it} \} \leq \gamma \ts c_i$ once more, we have 
\begin{align*}
\mathbb P \Bigg\{ \sum_{t \in \Tcal} \Nrm_{it} \geq c_i \Bigg\}
& \ts \stackrel{(a)}\leq \ts
\mathbb P \Bigg\{ \sum_{t \in \Tcal} [\Nrm_{it} - \mathbb E\{ \Nrm_{it} \}] \geq (1-\gamma) \ts c_i \Bigg\}
\ts \stackrel{(b)} \leq \ts 
\exp\Bigg(- \frac{\frac12 \ts (1-\gamma)^2 \ts c_i^2}{\text{\sf Var}(\sum_{t \in \Tcal} \Nrm_{it}) + \frac 13 \ts (1-\gamma) \ts c_i} \Bigg)
\\
& \stackrel{(c)} \leq \ts
\exp\Bigg(- \frac{\frac12 \ts (1-\gamma)^2 \ts c_i^2}{\gamma \ts c_i + \frac 13 \ts (1-\gamma) \ts c_i} \Bigg)
\ts \stackrel{(d)} \leq \ts
\exp\Bigg(- \frac{\frac12 \ts (1-\gamma)^2 \ts c_i^2}{c_i} \Bigg)
\ts \stackrel{(e)} \leq \ts
\exp\Bigg( - \frac{(1-\gamma)^2 \ts c_{\min}}{2}\Bigg),
\end{align*}
where $(a)$ uses the fact that $\mathbb E\{ \Nrm_{it} \} \leq \gamma \ts c_i$, $(b)$ is the one-sided Bernstein inequality, $(c)$ follows because we have $\text{\sf Var}(\sum_{t \in \Tcal} \Nrm_{it}) \leq \gamma \ts c_i$ by the discussion just before the lemma, $(d)$ uses the inequality $\gamma + \frac 13(1-\gamma) = \frac 23 \ts \gamma + \frac 13 \leq 1$ and $(e)$ follows by noting that $c_{\min} \leq c_i$. Using $g(\gamma)$ to denote the expression on the right side of the chain of inequalities above as a function of $\gamma$, using the union bound once more, we obtain $\mathbb P \{ \sum_{t \in \Tcal} \Nrm_{it} \geq c_i \mbox{ for some $i \in \Acal_j$}\} \leq L \ts g(\gamma)$, which is equivalent to having $\mathbb P \{ \sum_{t \in \Tcal} \Nrm_{it} < c_i ~\forall \ts i \in \Acal_j\} \geq 1 - L \ts g(\gamma)$. Noting also that  $\mathbb P \{ \sum_{t \in \Tcal} \Nrm_{it} < c_i ~\forall \ts i \in \Acal_j\} \geq 1 - L \ts \gamma$, as well as using the fact that $\mathbb P \{ \Grm_{jt} = 1\} \geq \mathbb P \{ \sum_{t \in \Tcal} \Nrm_{it} < c_i ~\forall \ts i \in \Acal_j\}$ by the discussion just before (\ref{eqn:avail_param}), we obtain $\mathbb P \{ \Grm_{jt} = 1\} \geq \mathbb P \{ \sum_{t \in \Tcal} \Nrm_{it} < c_i ~\forall \ts i \in \Acal_j\} \geq \max\{1 - L \ts \gamma , 1 - L \ts g(\gamma) \}$. \qed

In Lemma \ref{lem:apx_lp}, we lower bound the performance of the approximate policy. In Lemma \ref{lem:avail_prob}, we lower bound the availability probability. Using these results, we give a proof of Theorem \ref{thm:perf}.

{\bf \underline{Proof of Theorem \ref{thm:perf}\phantom{p}\!\!\!}:}
\\
\indent Consider the approximate policy with the tuning parameter $\gamma = \frac{1}{2L}$.  By Lemma \ref{lem:avail_prob}, if we set $\avail(\gamma) = 1 - L \ts \gamma$, then we have $\mathbb P \{ \Grm_{jt} = 1\} \geq \avail(\gamma)$. In this case, choosing the tuning parameter as $\gamma = \frac{1}{2L}$, we have $\avail(\gamma) = \frac{1}{2}$. Using this value of the tuning parameter in Lemma \ref{lem:apx_lp}, we obtain  $\frac{\apx}{Z_\lp^*} \geq \frac 12 \times \frac{1}{2L} \times \frac12 = \frac{1}{8L}$. Also, consider the approximate policy with the tuning parameter $\gamma = 1 - \sqrt{\frac{2 \log c_{\min}}{c_{\min}}}$. By Lemma \ref{lem:avail_prob}, if we set $\avail(\gamma) = 1 - L \ts \exp( -\frac 12 (1-\gamma)^2 \ts c_{\min})$, then we have $\mathbb P \{ \Grm_{jt} = 1\} \geq \avail(\gamma)$. In this case, choosing the tuning parameter as $\gamma = 1 - \sqrt{\frac{2 \log c_{\min}}{c_{\min}}}$ and noting that $(1-\gamma)^2 = \frac{2 \ts \log c_{\min}}{c_{\min}}$, we have $\avail(\gamma) = 1 - \frac{L}{c_{\min}}$. Using this value of the tuning parameter in Lemma \ref{lem:apx_lp}, we obtain $\frac{\apx}{Z_\lp^*} \geq \frac{1}{2} \ts (1 - \sqrt{\frac{2 \log c_{\min}}{c_{\min}}}) \ts (1 - \frac{L}{c_{\min}}) \geq \frac{1}{2} - \sqrt{\frac{\log c_{\min}}{2 \ts c_{\min}}} - \frac{L}{c_{\min}}$. \qed

There are two inequalities that drive the proof of Theorem \ref{thm:perf}. First, as given at the beginning of the proof of Lemma \ref{lem:apx_lp}, we have $\Rbar_j \leq 2 \ts \Cbar_j \reve_j^\Fs /  \max\{ \Cbar_j , \capa_j^\Fs\}$. Second, as given in (\ref{eqn:avail_param}), we have $\sum_{t \in \Tcal} \betabar_{jt} \sum_{\ell \in \Ncal}  \ind{\pbar_{jt}^\Fs=\ell} \ts \lambda_{jt}^\ell \leq \Cbar_j$. The first inequality ensures that the prices in the price path $\Fs$ result in large enough total expected revenue so that we can compare the total expected revenue of the approximate policy with the optimal objective value of the \ref{eqn:fluid}. The second inequality ensures that  the prices in the price path $\Fs$ result in small enough total expected capacity consumption so that we can bound the availability probabilities. The fraction $\Cbar_j / \max\{ \capa_j^\Fs , \Cbar_j\}$ in the probability of making product $j$ available for purchase balances the two goals.

\section{Proof of Theorem \ref{thm:expost}}
\label{sec:expost}

We define the Bernoulli random variable $\Grm_{jt}$ such that $\Grm_{jt} = 1$ if and only if we have remaining resource capacities to make product $j$ available at time period $t$ under the ex-post approximate policy. Letting $\zetabar_{jt} = \thetabarp_{jt} \ts c_{\min}/(c_{\min} + \Deltabar)$ with $\thetabarp_{jt} = \sum_{\ell \in \Ncal} \ind{\pbar_{jt}^\Fs = \ell} \frac{\xbar_{jt}^\ell}{\ybar_{jt}^\ell}$ for notational brevity, if there are remaining resource capacities to make product $j$ available for purchase at time period $t$, then the ex-post approximate policy makes the  product available with probability $\gamma \ts \zetabar_{jt}$. In this case, by the same argument at the beginning of Appendix \ref{sec:perf}, using $\apxp$ to denote the total expected revenue of the ex-post approximate policy, we have $\apxp = \sum_{t \in \Tcal} \sum_{j \in \Jcal} \mathbb P \{ \Grm_{jt} = 1\} \ts \gamma \ts \zetabar_{jt} \sum_{\ell \in \Ncal} \ind{\pbar_{jt}^\Fs = \ell} \ts \lambda_{jt}^\ell \ts r_j^\ell$. %In the next lemma, we show that if we have $\mathbb P \{ \Grm_{jt} = 1\} \geq \avail(\gamma)$ for all $j \in \Jcal$ and $t \in \Tcal$, then we can relate the total expected revenue of the approximate policy to the optimal objective value of the \ref{eqn:fluid}. This lemma is the analogue of Lemma \ref{lem:apx_lp}.
In the next lemma, we give an analogue of Lemma \ref{lem:apx_lp} for the ex-post approximate policy. 

\begin{lem}[Lower Bound on the Ex-Post Performance]
\label{lem:apx_lp_expost}
For some $\availp : [0,1] \rightarrow [0,1]$, if the availability probability satisfies $\mathbb P \{ \Grm_{jt} = 1\} \geq \availp(\gamma)$ for all $j \in \Jcal$ and $t \in \Tcal$, then we have  
\begin{align*}
\apxp \ts \geq \ts  \frac{c_{\min}}{c_{\min} + \Deltabar} \ts \gamma \ts \availp(\gamma) \ts Z_\lp^*.
\end{align*}
\end{lem}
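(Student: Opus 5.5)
We follow the outline of Lemma~\ref{lem:apx_lp}. Start from the expression for $\apxp$ given just before the lemma and use $\mathbb P\{\Grm_{jt}=1\}\geq \availp(\gamma)$ to get $\apxp \geq \gamma\,\availp(\gamma)\sum_{j\in\Jcal}\sum_{t\in\Tcal}\zetabar_{jt}\sum_{\ell\in\Ncal}\ind{\pbar_{jt}^\Fs=\ell}\,\lambda_{jt}^\ell\, r_j^\ell$. Substitute $\zetabar_{jt}=\thetabarp_{jt}\,c_{\min}/(c_{\min}+\Deltabar)$ and $\thetabarp_{jt}=\sum_{k\in\Ncal}\ind{\pbar_{jt}^\Fs=k}\,\xbar_{jt}^k/\ybar_{jt}^k$. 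The factor $c_{\min}/(c_{\min}+\Deltabar)$ does not depend on $j$ or $t$, so it comes outside the sums. Collapse the double sum with $\ind{\pbar_{jt}^\Fs=k}\,\ind{\pbar_{jt}^\Fs=\ell}=\ind{k=\ell}$, exactly as in steps $(a)$--$(c)$ of (\ref{eqn:rev_lb}). Then the inner sum becomes $\reve_j^\Fs$ by (\ref{eqn:path_rev}), and we have
\begin{align*}
\apxp ~\geq~ \frac{c_{\min}}{c_{\min}+\Deltabar}\,\gamma\,\availp(\gamma)\sum_{j\in\Jcal}\reve_j^\Fs .
\end{align*}

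It remains to show $\reve_j^\Fs\geq \Rbar_j$ for every product $j$. By Proposition~\ref{pro:extreme}, $\zbar_j^q=0$ for $q\notin\{\Fs,\Ss\}$, and $\zbar_j^\Fs+\zbar_j^\Ss=1$ by the fourth constraint in (\ref{eqn:path_fluid}). Hence $\Rbar_j=\zbar_j^\Fs\,\reve_j^\Fs+\zbar_j^\Ss\,\reve_j^\Ss$ is a convex combination of the two path revenues. Under the ex-post indexing we have $\reve_j^\Fs\geq\reve_j^\Ss$, so $\Rbar_j\leq \reve_j^\Fs$. Summing over $j$ and using $\sum_{j\in\Jcal}\Rbar_j=Z_\lp^*$ gives the claimed bound.

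No step here is a real obstacle. The only care needed is to check that the convex-combination identity $\Rbar_j=\sum_q\zbar_j^q\,\reve_j^q$ from the discussion after (\ref{eqn:path_rev}) applies with weights summing to one. Ratios $\xbar_{jt}^\ell/\ybar_{jt}^\ell$ with $\ybar_{jt}^\ell=0$ never arise on a path with positive weight, because such a path would force $\ybar_{jt}^\ell\geq \zbar_j^q>0$. If $\zbar_j^\Fs=0$, the path $\Fs$ is arbitrary, but the inequality $\reve_j^\Fs\geq\reve_j^\Ss=\Rbar_j$ still holds, provided we adopt the convention that the ratio is zero when $\ybar_{jt}^\ell=0$. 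The main substance of the ex-post argument, which is controlling availability through $\Deltabar$, is deferred to the analogue of Lemma~\ref{lem:avail_prob}. Here the scaling $c_{\min}/(c_{\min}+\Deltabar)$ is simply a constant carried through.
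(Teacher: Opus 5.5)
Your proposal is correct and follows the paper's proof essentially step for step: you pull the constant $c_{\min}/(c_{\min}+\Deltabar)$ outside the sums, collapse the indicator product to obtain $\sum_j \reve_j^\Fs$, and then use the ex-post indexing $\reve_j^\Fs \geq \reve_j^\Ss$ together with the convex-combination identity $\Rbar_j = \zbar_j^\Fs\,\reve_j^\Fs + \zbar_j^\Ss\,\reve_j^\Ss$ to get $\reve_j^\Fs \geq \Rbar_j$. Your remarks on the zero-denominator convention and on the case $\zbar_j^\Fs = 0$ are extra care that the paper leaves implicit.
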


\noindent{\it Proof:} Using the expression for the total expected revenue of the ex-post policy just before the lemma and lower bounding $\mathbb P \{ \Grm_{jt}=1\}$ with $\availp(\gamma)$, we have the chain of inequalities
\begin{align*}
\apxp 
~& =~ 
\sum_{t \in \Tcal} \sum_{j \in \Jcal} \mathbb P \{ \Grm_{jt} = 1\} \ts \gamma \ts \zetabar_{jt} \sum_{\ell \in \Ncal} \ind{\pbar_{jt}^\Fs = \ell} \ts \lambda_{jt}^\ell \ts r_j^\ell
~\geq~
\gamma \ts \availp(\gamma) \sum_{t \in \Tcal} \sum_{j \in \Jcal} \zetabar_{jt} \sum_{\ell \in \Ncal} \ind{\pbar_{jt}^\Fs = \ell} \ts \lambda_{jt}^\ell \ts r_j^\ell
\nonumber
\\
~&\stackrel{(a)} =~
 \frac{c_{\min}}{c_{\min} + \Deltabar} \ts \gamma \ts \availp(\gamma) \sum_{j \in \Jcal}  \sum_{t \in \Tcal}  \sum_{k \in \Ncal}  \ts \ind{\pbar_{jt}^\Fs = k} \ts  \frac{\xbar_{jt}^k}{\ybar_{jt}^k}  \sum_{\ell \in \Ncal} \ind{\pbar_{jt}^\Fs = \ell} \ts \lambda_{jt}^\ell \ts r_j^\ell
\nonumber
\\
~&=~
\frac{c_{\min}}{c_{\min} + \Deltabar} \ts \gamma \ts \availp(\gamma) \sum_{j \in \Jcal}  \sum_{t \in \Tcal} \sum_{\ell \in \Ncal}  \ind{\pbar_{jt}^\Fs = \ell} \ts \lambda_{jt}^\ell \ts r_j^\ell \ts  \frac{\xbar_{jt}^\ell}{\ybar_{jt}^\ell}
~=~
\frac{c_{\min}}{c_{\min} + \Deltabar} \ts \gamma \ts \availp(\gamma) \ts \sum_{j \in \Jcal} \reve_j^\Fs
\nonumber
\\
~&\stackrel{(b)}\geq~ 
\frac{c_{\min}}{c_{\min} + \Deltabar} \ts \gamma \ts \availp(\gamma) \ts \sum_{j \in \Jcal} \Rbar_j 
~=~ 
\frac{c_{\min}}{c_{\min} + \Deltabar} \ts \gamma \ts \availp(\gamma) \ts Z_\lp^*,
\end{align*}
where $(a)$ is by the definition of $\zetabar_{jt}$ and $(b)$ holds because $\Rbar_j = \zbar_j^\Fs \ts \reve_j^\Fs + \zbar_j^\Ss \ts \reve_j^\Ss$ with $\zbar_j^\Fs + \zbar_j^\Ss = 1$, so noting that we index the price paths such that $\reve_j^\Fs \geq \reve_j^\Ss$, we obtain $\reve_j^\Fs \geq \Rbar_j \geq \reve_j^\Ss$. \qed

The chain of inequalities in the proof is the analogue of (\ref{eqn:rev_lb}) for the ex-post approximate policy. %Note that the fraction $1/2$ on the right side of the inequality in Lemma \ref{lem:apx_lp} is replaced with $c_{\min} / (c_{\min} + \Deltabar)$ in Lemma \ref{lem:apx_lp_expost}. 
Similar to the development in Appendix \ref{sec:perf}, we consider an inventory agnostic policy that makes its decisions just as the ex-post approximate policy, but without checking whether we have remaining resource capacities. If a customer chooses to purchase a product without remaining capacities, then she leaves without a purchase.  Defining the Bernoulli random variable $\Nrm_{it}$ with parameter  $\sum_{j \in \Jcal} a_{ij} \ts \gamma \ts \zetabar_{jt} \sum_{\ell \in \Ncal} \ind{\pbar_{jt}^\Fs=\ell} \ts \lambda_{jt}^\ell$, the demand for the capacity of resource $i$ at time period $t$ under the inventory agnostic policy is given by the random variable $\Nrm_{it}$. The random variable $\Nrm_{it}$ upper bounds the capacity consumption of resource $i$ at time period $t$ under the ex-post approximate policy. Because \mbox{$\Cbar_j = \zbar_j^\Fs \capa_j^\Fs + \zbar_j^\Ss \ts \capa_j^\Ss$}, we get $\capa_j^\Fs = \Cbar_j + \zbar_j^\Ss \ts (\capa_j^\Fs - \capa_j^\Ss) \leq \Cbar_j + [\capa_j^\Fs - \capa_j^\Ss]^+$.
In this case, we obtain the chain of inequalities
\begin{align}
& \sum_{t \in \Tcal} \zetabar_{jt} \sum_{\ell \in \Ncal}  \ind{\pbar_{jt}^\Fs=\ell} \ts \lambda_{jt}^\ell
~=~
\frac{c_{\min}}{c_{\min} + \Deltabar} \sum_{t \in \Tcal}  \sum_{k \in \Ncal}  \ts \ind{\pbar_{jt}^\Fs = k} \ts  \frac{\xbar_{jt}^k}{\ybar_{jt}^k}  \sum_{\ell \in \Ncal} \ind{\pbar_{jt}^\Fs = \ell} \ts \lambda_{jt}^\ell
\nonumber
 \\
~&\qquad =~
\frac{c_{\min}}{c_{\min} + \Deltabar} \sum_{t \in \Tcal}  \sum_{\ell \in \Ncal}      \ind{\pbar_{jt}^\Fs = \ell} \ts \lambda_{jt}^\ell \ts \frac{\xbar_{jt}^\ell}{\ybar_{jt}^\ell}
~=~
\frac{c_{\min}}{c_{\min} + \Deltabar}\ts \capa_j^\Fs 
~\leq~\frac{c_{\min}}{c_{\min} + \Deltabar} \ts (\Cbar_j + [\capa_j^\Fs - \capa_j^\Ss]^+).
\label{eqn:avail_param_expost}
\end{align}
The chain of inequalities above is the analogue of (\ref{eqn:avail_param}) for the ex-post approximate policy. We have $\sum_{j \in \Jcal} a_{ij} \ts (\Cbar_j + [\capa_j^\Fs - \capa_j^\Ss]^+) \leq \sum_{j \in \Jcal} a_{ij} \ts \Cbar_j + \Deltabar \leq c_i + \Deltabar \leq c_i \ts (1 + \frac{\Deltabar}{c_{\min}})$, where the second inequality uses $\sum_{j \in \Jcal} a_{ij} \ts \Cbar_j \leq c_i$ by the discussion at the beginning of Section \ref{sec:policy}. Thus, setting $\alpha_{it} = \sum_{j \in \Jcal} a_{ij} \ts \gamma \ts \zetabar_{jt} \sum_{\ell \in \Ncal} \ind{\pbar_{jt}^\Fs=\ell} \ts \lambda_{jt}^\ell$ to capture the parameter of the Bernoulli random variable $\Nrm_{it}$, by the last chain of inequalities, as well as (\ref{eqn:avail_param_expost}), we get $\sum_{t \in \Tcal} \alpha_{it} \leq \gamma \ts \frac{c_{\min}}{c_{\min} + \Deltabar} \ts c_i \ts (1 + \frac{\Deltabar}{c_{\min}}) = \gamma \ts c_i$. We get $\sum_{t \in \Tcal} \mathbb E \{ \Nrm_{it} \} \leq \gamma \ts c_i$ and $\text{\sf Var}(\sum_{t \in \Tcal} \Nrm_{it}) = \sum_{t \in \Tcal} \alpha_{it} \ts (1 - \alpha_{it}) \leq \sum_{t \in \Tcal} \alpha_{it}$. In this case, we can use precisely the same argument in Lemma \ref{lem:avail_prob} 
to bound the availability probability $\mathbb P \{ \Grm_{jt} = 1\}$ for having remaining resource capacities to make product $j$ available at time period $t$ under the ex-post approximate policy by the same expression in Lemma \ref{lem:avail_prob}.

By the discussion so far, we lower bound the performance of the ex-post approximate policy, as well as the availability probabilities. Using these results, we give a proof of Theorem \ref{thm:expost}.

{\bf \underline{Proof of Theorem \ref{thm:expost}\phantom{p}\!\!\!}:}
\\
\indent Consider the ex-post approximate policy with the tuning parameter $\gamma = 1 - \sqrt{\frac{2 \log c_{\min}}{c_{\min}}}$. Noting the discussion just after (\ref{eqn:avail_param_expost}), if we set $\availp(\gamma) = 1 - L \ts \exp( -\frac 12 (1-\gamma)^2 \ts c_{\min})$, then we have \mbox{$\mathbb P \{ \Grm_{jt} = 1\} \geq \availp(\gamma)$}. In this case, choosing the tuning parameter as $\gamma = 1 - \sqrt{\frac{2 \log c_{\min}}{c_{\min}}}$ and noting that $(1-\gamma)^2 = \frac{2 \ts \log c_{\min}}{c_{\min}}$, we have $\availp(\gamma) = 1 - \frac{L}{c_{\min}}$. Using this value of the tuning parameter in Lemma \ref{lem:apx_lp_expost}, we obtain $\frac{\apxp}{Z_\lp^*} \geq \frac{c_{\min}}{c_{\min} + \Deltabar} (1 - \sqrt{\frac{2 \log c_{\min}}{c_{\min}}}) \ts (1 - \frac{L}{c_{\min}})$. To lower bound the right side of the last inequality, we observe that $\frac{c_{\min}}{c_{\min} + \Deltabar} \ts  (1 - \frac{L}{c_{\min}}) \geq (1 - \frac{\Deltabar}{c_{\min}}) \ts (1 - \frac{L}{c_{\min}}) \geq 1 - \frac{L + \Deltabar}{c_{\min}}$. In this case, we obtain $\frac{\apxp}{Z_\lp^*} \geq  (1 - \sqrt{\frac{2 \log c_{\min}}{c_{\min}}}) \ts (1 - \frac{L + \Deltabar}{c_{\min}}) \geq 1 - \sqrt{\frac{2 \log c_{\min}}{c_{\min}}} - \frac{L + \Deltabar}{c_{\min}}$. \qed

\vspace{-0mm}

\section{Optimal Solution to the Fluid Approximation in the Counterexample}
\label{sec:unbounded_diff}

\vspace{-1mm}

We construct an optimal solution to the \ref{eqn:fluid}. Noting that there is a single product, considering the decision variables $(x_{1t}^\ell : \ell = 1,2,~t=1,\ldots,C^2)$, because $\lambda_{1t}^2 = 0$ for $t \neq 1$, it is enough to work with the decision variables $(x_{1t}^1 : t=1,\ldots,C^2)$ and $x_{11}^2$. We can set the values of the remaining decision variables $(x_{1t}^2 : t=2,\ldots,C^2)$ to zero without changing the optimal objective value of the \ref{eqn:fluid}. In the \ref{eqn:fluid}, consider fixing the values of the decision variable $x_{11}^2$ at $\alpha$. In this case, by the second constraint, we have $y_{11}^2 \geq \alpha$, but noting the third constraint with $\ell = 2$, we get $y_{1t}^2 \geq y_{1,t-1}^2 \geq \alpha$ for all $t = \Tcal \setminus \{1\}$. Because $\lambda_{1t}^1 = 1$ for all $t \in \Tcal$ and $\lambda_{11}^2 = 1$, by the first constraint, we get $\sum_{t=1}^{C^2} x_{1t}^1 \leq C - \alpha$. In this case, we can obtain an optimal solution to the \ref{eqn:fluid} by solving the linear program 
\begin{align}
\alpha + \max_{(\xvec,\yvec) \in [0,1]^{3 \ts C^2}} \Bigg\{ \frac{1}{C} \sum_{t=1}^{C^2} x_{1t}^1 ~:~ & \sum_{t =1}^{C^2} x_{1t}^1 \leq C - \alpha,~
x_{1t}^1 \leq y_{1t}^1~~\forall \ts t=1,\ldots,C^2,
\nonumber
\\
& y_{1,C^2}^2 \geq y_{1,C^2-1}^2 \geq \ldots \geq y_{11}^2 \geq \alpha,~~
y_{1t}^1 + y_{1t}^2 = 1 ~~\forall \ts t = 1,\ldots,C^2 \Bigg\}.
\label{eqn:small_fluid}
\end{align}
The four constraints in (\ref{eqn:small_fluid}) are the four constraints in the \ref{eqn:fluid}. We add the constant $\alpha$ to the optimal objective value above because we fix the value of the decision variable $x_{11}^2$ at $\alpha$ and this decision variable appears with a coefficient of one in the objective function of the \ref{eqn:fluid}. By the fourth constraint in (\ref{eqn:small_fluid}), we have $y_{1t}^1 = 1 - y_{1t}^2$ and the decision variable $y_{1t}^1$ is an upper bound on the decision variable $x_{1t}^1$, so it is optimal to set the value of the decision variable $y_{1t}^2$ as small as possible so that the value of the decision variable $y_{1t}^1$ becomes as large as possible. Noting the third constraint above, we set $y_{1t}^2 = \alpha$ for all $t=1,\ldots,C^2$, in which case, by the second constraint above, we get $x_{1t}^1 \leq 1-\alpha$ for all $t=1,\ldots,C^2$. Thus, we can set the values of all decision variables $x_{1t}^1 = 1-\alpha$ for all $t=1,\ldots,C^2$ as long as the first constraint in (\ref{eqn:small_fluid}) allows doing so. Therefore, the optimal objective value of the maximization problem in (\ref{eqn:small_fluid}) is given by $\frac{1}{C} \min\{ C - \alpha , C^2 \ts (1-\alpha)\}$. Adding the constant $\alpha$ above, to obtain an optimal solution to the \ref{eqn:fluid}, we need to choose the value of $\alpha$ to maximize $f(\alpha) = \alpha + \frac{1}{C} \min\{ C - \alpha , C^2 \ts (1-\alpha)\}$ over all $\alpha \in [0,1]$. Because the minimum of two affine functions is piecewise linear and concave, the maximizer of the last function occurs either at a point of non-differentiability or at $\alpha = 0,1$. We have $f(0) = f(1) = 1$. To find a point of non-differentiability, setting $C - \alpha = C^2 \ts (1-\alpha)$ and solving for $\alpha$, we get $\alpha = C / (1+C)$. Because $f(\frac{C}{1+C}) = 2C/(1+C) \geq 1$, it is optimal to choose $\alpha = C/(1+C)$ yielding the optimal objective value of $2C/(1+C)$ for the \ref{eqn:fluid}.

\section{Feasible Price Paths Under Price Monotonicity Constraints}
\label{sec:tum_mon}

By the discussion at the end of Section \ref{sec:gen_const}, under price monotonicity constraints, we use the polytope $\Pcal_j = \{ \yvec_j \in [0,1]^{n \times T} : \sum_{k=\ell}^n y_{j,t-1}^k + \sum_{k=1}^{\ell-1} y_{jt}^k \leq 1~\forall \ts \ell \in \Ncal,~t \in \Tcal \setminus \{1\},~~\sum_{\ell \in \Ncal} y_{jt}^\ell = 1 ~\forall \ts t \in \Tcal\}$ to capture the set of feasible price paths for product $j$. We index the price level and time period pairs $\{ (\ell,t) : \ell \in \Ncal,~t \in \Tcal\}$ by using the integers $\{1,\ldots,n T\}$, so that the price level and time period pair~$(\ell,t)$ corresponds to the integer $\ell + (t-1) \ts n$. Thus, we use the vector of decision variables \mbox{$\zvec_j = (z_j^q : q = 1,\ldots,nT)$} instead of the vector of decision variables \mbox{$\yvec_j = (y_{jt}^\ell : \ell \in \Ncal,~t \in \Tcal)$}. In this case, considering the first constraint in the polytope $\Pcal_j$, capturing the price level and time period pair $(\ell,t)$ with the integer $\ell + (t-1) \ts n$, the decision variables $\{ y_{j,t-1}^k: k =\ell,\ldots,n\}$ correspond to the decision variables \mbox{$\{ z_j^q : q = \ell + (t-2) \ts n ,\ldots, n + (t-2) n\}$}, whereas the decision variables \mbox{$\{ y_{jt}^k : k=1,\ldots,\ell-1\}$} correspond to the decision variables $\{z_j^q : q = 1 + (t-1) \ts n ,\ldots, \ell-1 + (t-1) \ts n\}$. Thus, noting that $n +(t-2) \ts n = (t-1) \ts n$, the first constraint takes the form $\sum_{q=\ell + (t-2) n}^{\ell-1 + (t-1) n} z_j^q \leq 1$. Using a similar argument, considering the second constraint in the polytope $\Pcal_j$, this constraint takes the form $\sum_{q=1 + (t-1)n}^{n + (t-1) n} z_j^q =1$. By the preceding discussion, we can express the polytope $\Pcal_j$ as  $\Pcal_j = \{ \zvec_j \in [0,1]^{nT} : \sum_{q=\ell + (t-2) n}^{\ell-1 + (t-1) n} z_j^q \leq 1 ~\forall \ts \ell \in \Ncal,~t \in \Tcal \setminus \{1\},~~\sum_{q=1 + (t-1)n}^{n + (t-1) n} z_j^q =1 ~\forall \ts t \in \Tcal\}$.~In this representation of the polytope $\Pcal_j$, each constraint row for the polytope $\Pcal_j$ involves a sum of a number of consecutive decision variables in the vector $(z_j^q: q =1,\ldots,nT)$, which is to say that we can capture the constraint matrix for the polytope $\Pcal_j$ by using an interval matrix. By Corollary III.1.2.10 in \cite{NeWo88}, interval matrices are totally unimodular, so the extreme points of the polytope $\Pcal_j$ have integer values. 

\section{Feasible Price Paths Under Promotion Fatigue Constraints}
\label{sec:tum_fatigue}

Using the vector of decision variables $\yvec_j = (y_{jt}^\dis : t \in \Tcal)$, by the discussion at the end of Section \ref{sec:gen_const}, under promotion fatigue constraints, we can capture the set of feasible price paths for product~$j$ by using the polytope \mbox{$\Pcal_j = \{ \yvec_j \in [0,1]^T : \sum_{\tau = t}^{(t+K-1) \wedge T}  y_{j\tau}^\dis \leq 1~  \forall \ts t \in \Tcal, ~~  \ts y_{jt}^\dis \leq 1 ~ \forall \ts t \in \Tcal \}$}. In the first constraint in the polytope $\Pcal_j$, each constraint row involves a sum of a number of consecutive decision variables in the vector $(y_{jt}^\dis : t \in \Tcal)$, so we can capture the first constraint by using an interval matrix, which is totally unimodular. Considering the second constraint in the polytope $\Pcal_j$, each constraint row involves only one of the decision variables $(y_j^\dis : t \in \Tcal)$, so we can capture the second constraint by using an identity matrix. By Proposition III.1.2.1 in \cite{NeWo88}, appending the identity matrix to a totally unimodular matrix results in a totally unimodular matrix, so the extreme points of $\Pcal_j$ have integer values.

\section{Processing the Hotel Dataset}
\label{sec:hotel_data}

We discuss our approach for estimating the parameters $\{ (\beta_s , \alpha_s) : s =1,\ldots,9\}$ in our demand model. We preprocess the dataset as follows. 
The dataset includes the customers who made reservations, but does not include the customers who inquired about the charged price and chose not to make a reservation. We proceed with the assumption that there are a total of 40 customer arrivals on each day, which is significantly larger than the number of bookings on any day. We add no-purchase records into the dataset so that there are a total of 40 customer arrivals on each day. For each added no-purchase record, we assign a lead time and number of nights of stay that are sampled by using the fractions of bookings estimated from the dataset, as well as assign a quoted price per stay day equal to the average of the price per stay day in the dataset quoted on the date of the \mbox{no-purchase} record. This approach is used in several papers to augment the dataset, including, for example, \cite{GaMa21} and \cite{BeGa22}, where the authors add no-purchase records to datasets that only record the purchases from the customers. We worked with different numbers of added no-purchase records and our computational results remained qualitatively the same. Customers can book 63 days in advance with 40 customer arrivals per day, resulting in $T = 63 \times 40 = 2520$ time periods in the selling horizon. We use maximum likelihood estimation to estimate the parameters $\{ (\beta_s , \alpha_s) : s =1,\ldots,9\}$  in the purchase probability $1 / (1 + \exp(\beta_s + \alpha_s \ts(40 + \frac{160}{K-1} \ts (\ell-1)) \times q))$. When doing so, the likelihood function is separable by the different segments in the dataset, so we can estimate the parameters $(\beta_s , \alpha_s)$ separately for each segment, but noting that our dataset spans a year, there are multiple weeks in the dataset that correspond to the same segment. All of the booking records taking place in segment $s$ play a role when estimating the parameters $(\beta_s,\alpha_s)$. 

\end{APPENDICES}

\end{document}